\newcolumntype{L}{>{\arraybackslash}X}
\theoremstyle{plain}
\newtheorem{theorem}{Theorem}[section]
\theoremstyle{remark}
\newtheorem{remark}[theorem]{Remark}
\theoremstyle{plain}
\newtheorem{lemma}[theorem]{Lemma}
\newtheorem{proposition}[theorem]{Proposition}
\newtheorem{definition}[theorem]{Definition}
\newtheorem{assumption}[theorem]{Assumption}
\numberwithin{equation}{section}
\def\N{{\mathbb N}}
\def\Z{{\mathbb Z}}
\def\Q{{\mathbb Q}}
\def\R{{\mathbb R}}
\newcommand{\E}{{\mathbf E}}
\renewcommand{\P}{{\mathbf P}}
\newcommand{\F}{{\mathscr F}}
\newcommand{\G}{{\mathrm G}}
\newcommand{\om}{\omega}
\renewcommand{\O}{\Omega}
\renewcommand{\a}{\kappa}
\renewcommand{\b}{\mathcal{B}}
\newcommand{\bb}{\mathcal{B}}
\newcommand{\Dom}{\mathscr{O}}
\newcommand{\Tor}{\mathbb{T}}
\newcommand{\W}{\mathcal{O}}
\newcommand{\A}{{\mathcal A}}
\newcommand{\X}{\mathscr{X}}
\newcommand{\loc}{\mathrm{loc}}
\newcommand{\calL}{{\mathscr L}}
\newcommand{\T}{\mathbb{T}}
\newcommand{\hz}{\prescript{}{0}{H}}
\newcommand{\hp}{c}
\newcommand{\D}{\mathscr{D}}
\newcommand{\Ls}{\mathbb{L}}
\newcommand{\Hs}{\mathbb{H}}
\newcommand{\Bs}{\mathbb{B}}
\newcommand{\J}{\mathbb{J}}
\newcommand{\q}{\mathbb{Q}}
\newcommand{\e}{\mathrm{E}}
\newcommand{\Sol}{\mathscr{R}}
\newcommand{\tT}{T^{*}}
\newcommand{\wt}{\widetilde}
\newcommand{\MRtas}{\mathcal{SMR}_{p,\a}^{\bullet}(s,T)}
\newcommand{\MRtatz}{\mathcal{SMR}_{p,\a}(t,T)}
\newcommand{\Tr}{\mathsf{Tr}}
\newcommand{\Xap}{X^{\mathsf{Tr}}_{\a,p}}
\newcommand{\Yr}{Y^{\mathsf{Tr}}_{r}}
\newcommand{\one}{{{\bf 1}}}
\newcommand{\embed}{\hookrightarrow}
\newcommand{\s}{\delta}
\newcommand{\lb}{\langle}
\newcommand{\rb}{\rangle}
\renewcommand{\div}{\mathrm{div}}
\newcommand{\B}{B}
\newcommand{\Stok}{\mathbb{S}}
\newcommand{\p}{\mathbb{P}}
\newcommand{\supp}{\text{supp}\,}
\DeclareMathOperator*{\esssup}{ess\,sup}
\renewcommand{\l}{\langle}
\renewcommand{\r}{\rangle}
\newcommand{\crit}{\mathrm{c}}
\newcommand{\norm}[1]{{\left\vert\kern-0.25ex\left\vert\kern-0.25ex\left\vert #1
    \right\vert\kern-0.25ex\right\vert\kern-0.25ex\right\vert}}
\renewcommand{\emptyset}{\varnothing}
\newcommand{\Ff}{\mathrm{F}}
\newcommand{\Progress}{\mathscr{P}}
\newcommand{\MeasurableP}{\mathscr{A}}
\newcommand{\btwod}{b}
\newcommand{\Fd}{f}
\newcommand{\Gforce}{g}
\newcommand{\Borel}{\mathscr{B}}
\newcommand{\Constant}{R}
\newcommand{\ellip}{\nu}
\newcommand{\reg}{\delta}
\newcommand{\wh}{\widehat}
\newcommand{\Set}{\mathcal{S}}
\newcommand{\partition}{\pi}
\newcommand{\vone}{v}
\newcommand{\vtwo}{v'}
\newcommand{\noise}{\wt{f}}
\newcommand{\wtwo}{\ell}
\newcommand{\usigma}{u_{\stopp}}
\newcommand{\stopp}{\varrho}
\newcommand{\pp}{\mathcal{P}}
\newcommand{\Lc}{\mathcal{L}}
\newcommand{\dd}{\mathrm{d}}
\newcommand{\smallpar}{\chi}
\begin{document}

\author{Antonio Agresti}
\address{Department of Mathematics,
	TU Kaiserslautern, Paul-Ehrlich-Stra{\ss}e 31,
	67663 Kaiserslautern, Germany}
\curraddr{Delft Institute of Applied Mathematics\\
Delft University of Technology \\ P.O. Box 5031\\ 2600 GA Delft\\The
Netherlands}
\email{antonio.agresti92@gmail.com}

\author{Mark Veraar}
\address{Delft Institute of Applied Mathematics\\
Delft University of Technology \\ P.O. Box 5031\\ 2600 GA Delft\\The
Netherlands} \email{M.C.Veraar@tudelft.nl}

\thanks{
The first author has been partially supported by the Nachwuchsring -- Network for the promotion of young scientists -- at TU Kaiserslautern.
The second author is supported by the VIDI subsidy 639.032.427 of the Netherlands Organisation for Scientific Research (NWO). Both authors were also supported by the VICI subsidy VI.C.212.027 of the Netherlands Organisation for Scientific Research (NWO)}

\date\today

\title[Stochastic Navier-Stokes equations in critical spaces]{Stochastic Navier-Stokes equations for\\ turbulent flows in critical spaces}

\keywords{Stochastic Navier-Stokes equations, transport noise, Kraichnan's turbulence, turbulent flows, critical spaces, Serrin blow-up criteria, stochastic maximal regularity, stochastic evolution equations.}

\subjclass[2010]{Primary: 35Q30,  Secondary: 60H15, 35B65, 76M35}

\begin{abstract}
In this paper we study the stochastic Navier-Stokes equations on the $d$-dimensional torus with transport noise, which  arise in the study of turbulent flows.
Under very weak smoothness assumptions on the data we prove local well-posedness in the critical case $\Bs^{d/q-1}_{q,p}$ for $q\in [2,2d)$ and $p$ large enough. Moreover, we obtain new regularization results for solutions, and new blow-up criteria which can be seen as a stochastic version of the Serrin blow-up criteria. The latter is used to prove global well-posedness with high probability for small initial data in critical spaces in any dimensions $d\geq 2$. Moreover, for $d=2$, we obtain new global well-posedness results and regularization phenomena which unify and extend several earlier results.
\end{abstract}

\maketitle
\tableofcontents

\section{Introduction}
\label{s:intro}
In this paper we provide a comprehensive treatment of (analytically and probabilistically) \emph{strong} solutions to stochastic Navier-Stokes equations for turbulent incompressible flows on the $d$-dimensional torus $\T^d$:
\begin{equation}
\label{eq:Navier_Stokes}
\left\{
\begin{aligned}
\dd u &=\big[\nu \Delta u -(u\cdot \nabla)u -\nabla p  \big]\, \dd t
+\sum_{n\geq 1}\big[(\btwod_{n}\cdot\nabla) u -\nabla \wt{p}_n+g_n(u)\big] \,\dd{w}_t^n,
\\
\div \,u&=0,
\\ u(\cdot,0)&=u_0.
\end{aligned}\right.
\end{equation}
Here $u:=(u^k)_{k=1}^d:[0,\infty)\times \O\times \Tor^d\to \R^d$ denotes the unknown velocity field, $p,\wt{p}_n:[0,\infty)\times \O\times \Tor^d\to \R$ the unknown pressures, $(w_t^n:t\geq 0)_{n\geq 1}$ a given sequence of independent standard Brownian motions and
\begin{equation*}
(\btwod_{n}\cdot\nabla) u:=\Big(\sum_{j=1}^d \btwod_n^j \partial_j u^k\Big)_{k=1}^d,
\qquad (u\cdot \nabla ) u:=\Big(\sum_{j=1}^d u^j \partial_j u^k\Big)_{k=1}^d.
\end{equation*}
Actually, we will consider \eqref{eq:Navier_Stokes} in a slightly more general form (see \eqref{eq:Navier_Stokes_generalized} below). For instance we replace $\nu$ by variable viscosity in divergence form, and we allow an additional deterministic forcing term.

The relation between Navier-Stokes equations and hydrodynamic turbulence is one of the most challenging problems in fluid mechanics. It is believed that the onset of the turbulence is related to the randomness of background movement. The mathematical study of stochastic perturbations of Navier-Stokes equations began with the work of Bensoussan and Temam in \cite{BT73} and it was later extended in several directions, see e.g.\ \cite{BCF91,BCF92,BF17,BP00,CP01,C78,FG95,IF79,WS00} and the references therein.  In the mathematical literature, usually, the random perturbation of the Navier-Stokes equations is postulated and does not have a clear physical motivation. In \cite{MR01,MiRo04}, Mikulevicius and Rozovskii found a new approach to derive the stochastic Navier-Stokes equations. Indeed, there it is assumed that the dynamics of the fluid particles is given by the \textit{stochastic flow}
\begin{equation}
\label{eq:NS_stochastic_flow}
\left\{
\begin{aligned}
\dd \mathcal{X}(t,x)& =u(t,\mathcal{X}(t,x))\,\dd t + \btwod(t,\mathcal{X}(t,x))\circ \dd W,\\
\mathcal{X}(0,x)& =x,\qquad x\in \Tor^d,
\end{aligned}\right.
\end{equation}
where $W$ and $\circ$ denote a white-noise and the Stratonovich integration, respectively. Using Newton's second law, in \cite{MiRo04} the authors derived the stochastic Euler and Navier-Stokes equations ``for turbulent flows'', or with transport noise.
Roughly speaking, \eqref{eq:NS_stochastic_flow} says that the velocity field splits into a sum of slow oscillating part $u\,\dd t$ and a fast oscillating part $\btwod \circ \dd W$. As noticed in \cite{MiRo04}, such splitting can be traced back to the work of Reynolds in 1880. Closely related models were proposed by Kraichnan \cite{K68,K94} in the study of turbulence transportation of passive scalars and then developed further by other authors, see e.g.\ \cite{GK96,GV00}. 
The corresponding theory of turbulence transportation is called Kraichnan's theory, and in the latter, one typically  assumes $\div \,b_n=0$ in $\D'(\T^d)$ for all $n\geq 1$, and for some small $\gamma_0>0$,
\begin{equation}
\label{eq:regularity_btwod}
\btwod=((\btwod^j_n)_{j=1}^d)_{n\geq 1}\in
C^{\gamma}(\Tor^d;\ell^2(\N_{\geq 1};\R^d)) \  \text{ for all }\  \gamma<\gamma_0,
\end{equation}
see e.g.\ \cite[Section 1]{MR05} and \cite[pp. 426-427 and 436]{MajKra99}. The reader is also referred to \cite[Section 5]{GY21} for details on the Kraichnan model. The case $\gamma_0=\frac{2}{3}$ in \eqref{eq:regularity_btwod} is related to the Kolmogorov spectrum of turbulence, see \cite[Subsection 2.1]{agresti2023primitive} and \cite[pp. 426]{MajKra99}.
The reader is referred to \cite{BF20,F_intro,MR01} for additional motivations. 
With this application in mind, in the current work, we consider the more general case of $b\in H^{\eta,\xi}(\T^d;\ell^2)$ where $H$ is the Bessel potential space and $\eta>0$, $\xi\in [2,\infty)$ satisfy $\eta>\frac{d}{\xi}$.
In light of this, the vorticity formulation of \eqref{eq:Navier_Stokes} (i.e.\ for the unknown $\text{curl}\, u$) is in general not available.
The reader interested in the Navier-Stokes equations in vorticity form may consult \cite{F15_lectures_saint_flour,F15_open,FL19,HLN21} and the references therein.

The problem \eqref{eq:Navier_Stokes} will be considered in the It\^o formulation. In the literature the Stratonovich formulation of \eqref{eq:Navier_Stokes} is also used frequently, see e.g.\ \cite{DP22_two_scale,FP22_2Dfluids}
 where the Navier-Stokes with transport noise in Stratonovich form is derived based on two-scale type approximations.
In the latter case for the transport term one replaces $\sum_{n\geq 1}[ (b_n\cdot\nabla) u-\nabla \wt{p}_n]\,\dd w_t^n$ by $\sum_{n\geq 1}[ (b_n\cdot\nabla) u-\nabla \wt{p}_n]\circ \dd w_t^n$. In the case $g\equiv 0$, $b_n$ is $(t,\om)$-independent and $\div\,b_n=0$, at least formally,
\begin{align}
\label{eq:stratonovich_correction}
\sum_{n\geq 1}\big[(b_n\cdot\nabla) u- \nabla \wt{p}_n\big]\circ \dd w_t^n
&= \sum_{n\geq 1}\big[
(b_n\cdot\nabla) u- \nabla \wt{p}_n\big]\,\dd w_t^n\\
\nonumber
+\Big[  \div( a_b\cdot \nabla u) & - \frac{1}{2}\sum_{n\geq 1} \div(b_n\otimes  \nabla\wt{p}_n) -\nabla q  \Big]\, \dd t,
\end{align}
where $a_b:=(\frac{1}{2}\sum_{n\geq 1} b^j_n b^i_n)_{i,j=1}^d$, $q$ is a scalar function which can be absorbed into the (deterministic) pressure $p$ and $\div(b_n\otimes  \nabla\wt{p}_n)=(\sum_{j=1}\partial_j (b_n^j \partial_k \wt{p}_n))_{k=1}^d$. The reader is referred to \cite[Subsection 1.3]{FL19} for details (see also \cite[Section 3.1]{FGL21} in absence of pressure terms). The additional deterministic term on the RHS\eqref{eq:stratonovich_correction} is referred to as the It\^o correction.
As recalled before \eqref{eq:regularity_btwod}, the divergence free condition on $b_n$ is physically justified. Since in  \eqref{eq:Navier_Stokes_generalized} we will also consider the differential operators appearing in the deterministic part of RHS\eqref{eq:stratonovich_correction}, our results also cover the case of transport noise in Stratonovich form.

It is not possible to discuss all relevant papers on stochastic Navier-Stokes equations here, and we mostly restrict ourselves to the ones where transport noise is included. There are several important papers concerned with martingale solutions to \eqref{eq:Navier_Stokes}, see e.g.\ \cite{BM13_unbounded,FG95,MR05,MR99} and the references therein. In these papers global existence is the main point, and uniqueness is open unless $d=2$. See also \cite{HLN19} for a rough path theory approach.

\subsection{Scaling and criticality}
\label{ss:scaling}
Before we discuss some of our main results in more detail, we will discuss scaling and criticality in case \eqref{eq:Navier_Stokes} is considered on $\R^d$ instead of $\T^d$ as the scaling is easier in the $\R^d$-case. This will also motivate the function spaces and the growth condition on $g$ used in the manuscript. In the deterministic setting (e.g.\ $\btwod\equiv 0$ and $\Gforce\equiv 0$), it is well-known that (local smooth) solutions to  \eqref{eq:Navier_Stokes} are (roughly) invariant under the map $u\mapsto u_{\lambda}$ where $\lambda >0$ and
\begin{equation}
\label{eq:NS_scaling_map}
u_{\lambda}(t,x)=\lambda^{1/2}u(\lambda t,\lambda^{1/2} x),\qquad
 (t,x)\in \R_+\times\R^d.
\end{equation}
In the PDE literature (see e.g.\ \cite{Can04,LePi,PW18,Trie13}), Banach spaces of functions (locally) invariant under the induced map on the initial data, i.e.\
$$
u_0\mapsto u_{0,\lambda} \quad \text{ where } \quad u_{0,\lambda} :=\lambda^{1/2} u_{0}(\lambda^{1/2}\cdot ),
$$ 
are called \textit{critical} for \eqref{eq:Navier_Stokes}. Examples of such critical spaces are the Besov space
$
B^{d/q-1}_{q,p}(\R^d;\R^d)
$
and the Lebesgue space
$
L^d(\R^d;\R^d)
$. Indeed, the corresponding homogeneous spaces satisfy
$$
\|u_{0,\lambda}\|_{\dot{B}^{d/q-1}_{q,p}(\R^d;\R^d)}\eqsim \|u_0\|_{\dot{B}^{d/q-1}_{q,p}(\R^d;\R^d)}, \qquad
\|u_{0,\lambda}\|_{L^d(\R^d;\R^d)}\eqsim \|u_0\|_{L^d(\R^d;\R^d)},
$$
where the implicit constants do not depend on $\lambda>0$.

In the stochastic case a similar behavior appears. More precisely, one can check that if $u$ is a (local smooth) solution to \eqref{eq:Navier_Stokes} on $\R^d$, then $u_{\lambda}$ is a (local smooth) solution to \eqref{eq:Navier_Stokes} on $\R^d$, where $(w^n)_{n\geq 1}$ is replaced by the \emph{scaled} noise $(\beta_{\cdot,\lambda}^n)_{n\geq 1}$ where $\beta_{t,\lambda}^n:=\lambda^{-1/2}w_{\lambda t}^n$ for $t\geq 0$ and $n\geq 1$.  Moreover, the scaling  \eqref{eq:NS_scaling_map} also suggests that the nonlinearity $g_n(u)$ has to growth quadratically in $u$ in order to keep the same scaling. Indeed, suppose that $g_n(u)=G_n(u,u)$ where $G_n:\R^d\times \R^d\to \R^d$ is bilinear. In such a case, roughly speaking, for all $\lambda>0$, $x\in \R^d$ and $n\geq 1$,  it holds that 
\begin{align*}
\int_0^{t/\lambda} g_n(u_{\lambda}(s,x)) \,\dd \beta_{s,\lambda}^n
&=
\int_0^{t/\lambda} \lambda G_n(u_{\lambda}(\lambda s,\lambda^{1/2}x),u_{\lambda}(\lambda s,\lambda^{1/2}x))
\,\dd \beta_{s,\lambda}^n \\
&=\lambda^{1/2}
\int_0^{t}  G_n(u_{\lambda}(s,\lambda^{1/2}x),u_{\lambda}(s,\lambda^{1/2}x))\, \dd w_{s}^n,
\end{align*}
which agrees with the scaling of the deterministic nonlinearity
\begin{align*}
\int_0^{t/\lambda}\big(u_{\lambda}(s,x)\cdot \nabla\big)u_{\lambda}(s,x)\, \dd s
&= \lambda^{1/2}\int_0^{t} \big(u(s,\lambda^{1/2}x)\cdot \nabla \big)u(s,\lambda^{1/2}x) \,\dd s.
\end{align*}
A similar scaling argument also holds for the remaining deterministic and stochastic integrals. In particular, the $b$-term has the correct scaling under the map \eqref{eq:NS_scaling_map}.

The above discussion shows that the stochastic perturbations of gradient/transport type and/or of quadratic type in $u$ respect the scaling of the deterministic Navier-Stokes equations. In the present manuscript we will consider both situations. Furthermore, for our results to hold, we do not need $g_n$ to be bilinear. A quadratic growth condition
will be sufficient.

Finally, we mention that our methods can be also extended to the case where $g_n$ grows more than quadratically. However, the above scaling argument fails, and the corresponding critical spaces change accordingly to the growth of $g_n$ (see \cite[Subsection 5.2]{AV19_QSEE_1} for related results). Since we are not aware of physical motivations for such behaviour of $g_n$, we leave the details to the interested reader.

\subsection{Overview of the main results}
The aim of this paper is to provide a first systematic treatment of the stochastic Navier-Stokes equations \eqref{eq:Navier_Stokes} arising in study of turbulent flows in the \emph{strong} setting (more precisely $L^q(H^{s,q})$-setting).
Below we briefly list our main results.
\begin{enumerate}[(a)]
\item\label{it:local_intro} Local well-posedness in \emph{critical} spaces (Theorem \ref{t:NS_critical_local}).
\item\label{it:regularization} Regularization with \emph{optimal} gain in the Sobolev scale (Theorems \ref{t:NS_critical_local} and \ref{t:NS_high_order_regularity}).
\item\label{it:serrin_intro} Stochastic \emph{Serrin}'s blow-up criteria (Theorem \ref{t:NS_serrin}).
\item\label{it:global_intro_small_data} Global well-posedness for small data in \emph{critical} spaces (Theorem \ref{t:global_small_data}).
\item\label{it:global_intro} Global well-posed in case $d=2$ with \emph{rough} data (Theorem \ref{t:NS_initial_data_large_two_dimensional}).
 \end{enumerate}

\eqref{it:local_intro}: In Theorem \ref{t:NS_critical_local} (see also Theorem \ref{t:NS_critical_localgeneralkappa})
we show existence of \emph{strong unique} solutions to \eqref{eq:Navier_Stokes} for initial values $u_0$ belonging to the critical Besov space $B^{d/q-1}_{q,p}(\Tor^d;\R^d)$, where $q$ varies in a certain range (depending on the smoothness of $b_n$) and $p<\infty$ can be chosen as large as one wants. The embedding $L^d(\Tor^d;\R^d)\embed B^0_{d,p}(\Tor^d;\R^d)$ for $p\geq d$ shows that our results also hold for the critical space $L^d(\Tor^d;\R^d)$. Let us remark that, in all cases, we have the restriction $q<2d$ and therefore we allow critical spaces with smoothness up to $-\frac{1}{2}$. We are not aware of the optimality of this threshold. Note that this behavior differs from the deterministic case where $q$ can be chosen to be large.
Next we compare the result of Theorem \ref{t:NS_critical_local} to the existing literature.
In \cite{MiRo04} the authors showed (local) existence of strong solutions to \eqref{eq:Navier_Stokes} on $\R^d$ for divergence free vector field for initial values from the (non critical) space $W^{1,q}(\R^d;\R^d)$ with $q>d$ under a restrictive assumption on the regularity of $b$. In particular, the smoothness assumption there does not satisfy \eqref{eq:regularity_btwod} if $\gamma_0$ is small.
Existence of strong solutions to \eqref{eq:Navier_Stokes} in an $L^p(L^q)$-setting has been also considered in
\cite{DZ20_critical_NS,W20}. However, in the latter references, the Kraichnan term $(b_n\cdot \nabla) u$ is not included.

\eqref{it:regularization}: One question which will be answered in this paper is how the regularity of $b_n$ affects the regularity of $u$. The term $(b_n\cdot\nabla )u$ suggests that $u$ can gain at most one order of smoothness compared to $b_n$. In Theorem \ref{t:NS_high_order_regularity} we prove an \emph{optimal} gain of regularity in the Sobolev scale, i.e.\ for all $\eta>0$ and $\xi\in [2,\infty)$ we have
$$
(b_n^j)_{n\geq 1}\in H^{\eta,\xi}(\Tor^d;\ell^2)
\ \
\Longrightarrow \ \
u\in L^r_{\loc}(0,\sigma;H^{\eta+1,\xi}(\Tor^d;\R^d))\text{ a.s. }
$$
where $r\in (2,\infty)$ is arbitrary and $\sigma$ denotes the explosion time of $u$. In the H\"{o}lder scale, by Remark \ref{r:ass_NS} and Theorem \ref{t:NS_high_order_regularity}, we prove a \emph{sub-optimal} gain of regularity, i.e.\ for all $\eta>0$ and $\varepsilon>0$ and $\theta\in (0,1/2)$ we have
$$
(b_n^j)_{n\geq 1}\in C^{\eta}(\Tor^d;\ell^2)
\ \
\Longrightarrow \ \
u(t)\in C^{\theta,1+\eta-\varepsilon}_{\rm loc}((0,\sigma)\times \Tor^d;\R^d)\text{ a.s.}
$$
The above results are \emph{new} even in the important case $d=2$ where $\sigma=\infty$ a.s. Indeed, for $(b_n^j)_{n\geq 1}\in C^{\eta}(\Tor^d;\ell^2)$ with $\eta>0$ small, the regularization cannot be proved with classical bootstrap methods, see the text below Theorem \ref{t:NS_initial_data_large_two_dimensional}. However, in the case $b_n$ is regular enough, (some) gain of regularity can be proved by standard methods (see e.g.\ \cite{M09}) or via the previously mentioned vorticity formulation of \eqref{eq:Navier_Stokes}. But for rough   $(b_n^j)_{n\geq 1}$ these approaches are not applicable.

\eqref{it:serrin_intro}: To the best of our knowledge, blow-up criteria for stochastic Navier-Stokes equations have not been studied so far in the literature. In
Theorem \ref{t:NS_serrin} we provide an extension to the classical Serrin's criteria (see e.g.\ \cite[Theorem 11.2]{LePi}).

\eqref{it:global_intro_small_data}: In Theorem \ref{t:global_small_data} we prove existence for large times in probability of smooth solutions to \eqref{eq:Navier_Stokes} under smallness assumptions on $u_0$ and on the \emph{additive} part of $g_n$. In deterministic framework this result is well-known and for $H^{1/2}$-data (here $d=3$) is due to Fujita and Kato in the seminal paper \cite{FS64_NS}. Later the latter was extended to $B^{d/q-1}_{q,p}$-data and even to $\mathrm{BMO}^{-1}$-data by Koch and Tataru \cite{KT01}. In the stochastic framework, under smallness assumptions on the noise, some results in this direction can be found in \cite{DZ20_critical_NS,K10,KXZ21}. Note that in \cite{DZ20_critical_NS,KXZ21} the transport noise term $(b_n\cdot \nabla) u$ is not allowed, and in \cite{K10} a small transport noise term can be used, where an additional smallness condition on the data is used, but in a non critical space.
In Theorem \ref{t:global_small_data} we are able to give an extension of the previous results, where we also allow $b$ to be \emph{not} small and $g_n$ of quadratic type. As in \eqref{it:local_intro}, the smallness condition on $u_0$ is given in a $B^{d/q-1}_{q,p}$-norm with smoothness up to $-\frac{1}{2}$.

\eqref{it:global_intro}: In Theorem \ref{t:NS_initial_data_large_two_dimensional}
 we prove \emph{global} existence for \eqref{eq:Navier_Stokes} for initial data which are much \emph{rougher} than data from the usual energy space $L^2(\Tor^2;\R^2)$. More precisely, we prove the existence of $\zeta\in (2,4)$ (depending on the smoothness of $b_n$) such that \eqref{eq:Navier_Stokes} is globally well-posed for data in $B^{2/\zeta-1}_{\zeta,p}(\Tor^2;\R^2)$ where $p<\infty$ is large. Since $L^2(\Tor^2;\R^2)\embed B^{2/\zeta-1}_{\zeta,p}(\Tor^2;\R^2)$ for $p\geq 2$,  this result extends the well-known global existence result for $L^2$-data in two dimensions. For the reader's convenience, we also provide a self-contained proof of the latter result in Appendix \ref{s:global_L_2_rough_noise} which also improves and unifies several existing results, see Remark \ref{r:comparison_2D_other_results}. In the deterministic case similar results have been shown in \cite{GP02}, and also for rough forcing terms in \cite{BF09}.

\medskip
To prove our main results we apply the full power of the local well-posedness theory which we recently developed in \cite{AV19_QSEE_1} together with the blow-up criteria and new bootstrapping techniques of our paper \cite{AV19_QSEE_2}. In these works we completely revised the theory of abstract stochastic evolution equations for semi- and quasi-linear parabolic SPDEs, and is the stochastic analogue of \cite{CriticalQuasilinear} by Pr\"uss, Simonett and Wilke. One of the main ingredients in our theory is the so-called stochastic maximal $L^p$-regularity for the linear part, which we prove in an $H^{s,q}$-setting. In case of the Navier-Stokes equations this linear part is the turbulent Stokes system, for which we will prove stochastic maximal $L^p$-regularity in an $H^{s,q}(\T^d;\R^d)$-setting in Section \ref{s:max_reg_linear}, and it can be seen as one of the main result of our work as well.

\subsection{Notation}
Here we collect some notation which will be used throughout the paper. Further notation will be introduced where needed.
As usual, we write $A \lesssim_P B$ (resp.\ $A\gtrsim_P B$) whenever there is a constant $C$ only depending on the parameter $P$ such that $A\leq C B$ (resp.\ $A \geq C B$). We write
$C_{P}$ or $C(P)$ if $C$ depends only on $P$. Let $a\vee b = \max\{a,b\}$ and $a\wedge b=\min\{a,b\}$ where $a,b\in \R$. For a $(Y,\mathsf{d}_Y)$ a metric space, we set
$\B_{Y}(y,\eta):=\{y'\in Y: \mathsf{d}_Y(y,y')<\eta\}$.

For $a,\a\in \R$, the weights will be denoted by
$w_{\a}^a=|t-a|^{\a}$ and $w_{\a}:=w_{\a}^0$.
For a Banach space $X$, $a,b\in \R$ and an interval $(a,b)\subseteq \R_+$, $L^p(a,b,w_{\a}^a;X)$ denotes the set of all strongly measurable maps $f:(a,b)\to X$ such that
$$
\|f\|_{L^p(a,b,w_{\a}^a;X)}:=\Big(\int_a^b \|f(t)\|_{X}^p \, w^{a}_{\a}(t)\,\dd t\Big)^{1/p}<\infty.
$$
Moreover, $W^{1,p}(a,b,w_{\a}^a;X)\subseteq L^p(a,b,w_{\a}^a;X)$ denotes the set of all $f$ such that $f'\in L^p(a,b,w_{\a}^a;X)$ and we set
$$
\|f\|_{W^{1,p}(a,b,w_{\a}^a;X)}:=
\|f\|_{L^p(a,b,w_{\a}^a;X)}+
\|f'\|_{L^p(a,b,w_{\a}^a;X)}.
$$

Let $[\cdot,\cdot]_{\theta}$ and $(\cdot,\cdot)_{\theta,p}$ be the complex and real interpolation functor, see e.g.\ \cite{BeLo,Tri95}. For each $\theta\in (0,1)$, we set
$$
H^{\theta,p}(a,b,w_{\a}^a;X):=[L^p(a,b,w_{\a}^a;X),W^{1,p}(a,b,w_{\a}^a;X)]_{\theta}.
$$
The above spaces can also be equivalently introduced by using Fourier methods on $\R$ and restrictions, see e.g.\ \cite[Definition 3.1]{ALV21} and the references therein.

For $\A\in \{L^p,H^{\theta,p}\}$, we denote by $\A_{\loc}(a,b;X)$ (resp.\ $\A_{\loc}([a,b);X)$) the set of all strongly measurable maps $f:(a,b)\to X$ such that $f\in\A(c,d;X)$ for all
$a<c<d<b$ (resp.\ $f\in \A(a,d;X)$ for all $a<d<b$).

The $d$-dimensional torus is denoted by $\Tor^d$. For $m\geq 1$ and $s,r\in (0,1)$, $C^{s,r}_{\loc}((a,b)\times \Tor^d;\R^m)$ denotes the space of all maps $v:(a,b)\times \T^d \to \R^m$ such that for all $a<a'<b'<b$ we have
$$
|v(t,x)-v(t',x')|\lesssim_{a',b'} |t-t'|^{s}+|x-x'|^{r}, \qquad t,t'\in [a',b'], \, x,x'\in \Tor^d.
$$

Finally, we collect the main probabilistic notation. Through the paper, we fix a filtered probability space $(\O,\MeasurableP,(\F_t)_{t\geq 0}, \P)$ and we let $\E[\cdot]:=\int_{\O}\cdot\, \dd\P$.
A mapping $\tau:\O\to [0,\infty]$ is said to be a stopping time if $\{\tau\leq t\}\in \F_t$ for all $t\geq 0$. For a stopping time $\tau$ we let
$$
[0,\tau]\times \O:=\{(t,\om)\,:\, 0\leq t\leq \tau(\om)\}.
$$
Similar definitions hold for $[ 0,\tau)\times \O$, $(0,\tau)\times \O$ etc. $\Progress$
denotes the progressive $\sigma$-algebra on the above mentioned filtered probability space.

\section{Statement of the main results}
\label{s:statements}
In this section we state our main results on the following stochastic Navier-Stokes equations on the $d$-dimensional torus $\T^d$
\begin{equation}
\label{eq:Navier_Stokes_generalized}
\left\{\begin{aligned}
\dd  u & =\Big[\div(a\cdot \nabla u) -\div(u\otimes u)+\Fd_0(\cdot, u)+\div(f(\cdot, u))\\
&\qquad\qquad\qquad\qquad\qquad\qquad \qquad\qquad
 -\nabla p+\partial_h \wt{p}+\partial_{\hp}^2 \wt{p}\, \Big]\,\dd t \\
& \
+\sum_{n\geq 1}\big[(\btwod_n \cdot \nabla ) u +\Gforce_{n}(\cdot,u) -\nabla \wt{p}_n\big] \,\dd {w}_t^n,
\\ \div \,u &=0,
\\ u(\cdot,0) &=u_0,
\end{aligned}\right.
\end{equation}
where we have rewritten the convective term $(u\cdot\nabla)u$ in \eqref{eq:Navier_Stokes} in the standard conservative form $\div(u\otimes u)$. Here $u=(u^k)_{k=1}^d:[0,\infty)\times \O\times \Tor^d\to \R^d$ is the unknown velocity field, $p,\wt{p}_n:[0,\infty)\times \O\times \Tor^d\to \R$ the unknown pressures, $(w^n)_{n\geq 1}$ is a sequence of standard independent Brownian motions on the previously mentioned filtered probability space $(\O,\MeasurableP,(\F_t)_{t\geq 0}, \P)$, $u\otimes u:=(u^i u^j)_{i,j=1}^d$,
\begin{equation}
\label{eq:def_main_operators_Navier_Stokes_generalized}
 \begin{aligned}
\div(a\cdot \nabla u) &=\Big(\sum_{i,j=1}^d\partial_i(a^{i,j}\partial_j u^k)\Big)_{k=1}^d,
&
\partial_h \wt{p}&=\Big(\sum_{n\geq 1} \sum_{j=1}^d   h_n^{j,k} \partial_j \wt{p}_n \Big)_{k=1}^d, \\
(b_n\cdot \nabla) u&=\Big(\sum_{j=1}^d b^j_n \partial_j u^k \Big)_{k=1}^d,
&
\partial_{\hp}^2 \wt{p}&= \Big(\sum_{n\geq 1}\sum_{j=1}^d \partial_j (\hp_{n}^{j}\partial_{k}\wt{p}_n)\Big)_{k=1}^d, 
\end{aligned}
\end{equation}
and $\div(f(\cdot,u))=\big(\sum_{j=1}^d \partial_j (f^k_j(\cdot, u))\big)_{k=1}^d$. Here $a^{ij}, b^{j}_n, h^{j,k}_n,\hp_{n}^j$ are given functions. As explained near \eqref{eq:regularity_btwod} the $b^{j}_n$ are used to model turbulence. The $a^{ij}$ model a variable viscosity term and may also take into account It\^o's corrections in case of transport noise in Stratonovich form, see \eqref{eq:stratonovich_correction} and the discussion below it. Finally, $\partial_h \wt{p}$ and $\partial_{\hp}^2 \wt{p}$ model the effect of the turbulent part of the pressure $\wt{p}_n$
(see \cite[Example 1]{MiRo04} and again the discussion on the Stratonovich formulation near \eqref{eq:stratonovich_correction}). In case of It\^o noise one can take $\hp^j_n=0$, and in case of Stratonovich noise it is natural to take $\hp_n^j = -\frac12 b^j_n$. The additional forcing terms $\Fd_0(u)$ and $\div(f(\cdot,u))$ can be used to model the effect of gravity or the Coriolis force. For simplicity, we do not consider lower-order terms in the leading differential operators in \eqref{eq:Navier_Stokes_generalized}, see below Theorem \ref{t:maximal_Stokes_Tord} for some comments.

\medskip

This section is organized as follows.
In Subsections  \ref{ss:functional_analytic_setup} and \ref{ss:main_assumption_definition} we fix our notation and describe the spaces in which we consider \eqref{eq:Navier_Stokes_generalized}.
In Subsection \ref{ss:NS_statements} we present local existence and regularization phenomena for solutions to \eqref{eq:Navier_Stokes_generalized} with $u_0$ in critical spaces (see Theorem \ref{t:NS_critical_local}), in Subsection \ref{ss:Serrin_statement} we state a stochastic version of the Serrin's criteria and in Subsection \ref{ss:global_existence_NS} we state our global existence result for small initial data if $d\geq 3$, and for general rough initial data if $d=2$. The proofs will be given Section \ref{s:Navier_Stokes}.

Let us emphasise that even in the case $a^{ij} = \delta_{i,j}$, $f_0 = 0$, $f=0$, $h^{i,j}_n = 0$, $\hp_{n}=0$ and with either $b\neq 0$ or $g\neq 0$, our well-posedness and regularity results are new:
\begin{itemize}
\item The wide class of initial data we can treat has not been considered before.
\item New higher order regularity results are proved.
\end{itemize}
Finally, the global well-posedness result for $d=2$ is also new even with the above simplifications.

\subsection{The Helmholtz projection and function spaces}
\label{ss:functional_analytic_setup}
For any $s\in \R$, $p\in [1, \infty]$, $q\in (1,\infty)$ and integers $d,m\geq 1$, let $L^{q}(\Tor^d;\R^m)$, $H^{s,q}(\Tor^d;\R^m)$ and $B^s_{q,p}(\Tor^d;\R^m)$ denote the Lebesgue, Bessel-potential and Besov spaces on $\Tor^d$ with values in $\R^d$, respectively. The reader is referred to \cite[Chapter 3]{SchmTr} for details on periodic function spaces.

We denote by $\p$ the \emph{Helmholtz projection} which, for $f=(f^n)_{n=1}^d \in C^{\infty}(\Tor^d;\R^d)$ and $n\in \{1,\dots,d\}$, is given by $\p f=((\p f)^n)_{n=1}^d $ where
\begin{equation}
\label{eq:def_helmholtz_projection_Fourier}
(\widehat{\p f})^{n}(k):= \widehat{f^n}(k)-\sum_{j=1}^d \frac{k_j k_n}{|k|^2}  \widehat{f^j}(k),
\quad k\in \Z^{d}\setminus \{0\},
\quad (\widehat{\p f})^n(0):=\widehat{f^n}(0).
\end{equation}
Here, $\widehat{f}(k)$ denotes the $k$-th Fourier coefficient of $f$. Note that $\div \, (\p f)=0$ for all $f\in C^{\infty}(\Tor^d;\R^d)$, and by  duality $\p : \D'(\Tor^d;\R^d)\to \D'(\Tor^d;\R^d)$ where $\D(\Tor^d;\R^d)=C^{\infty}(\Tor^d;\R^d)$. Moreover, by standard Fourier multiplier theorems (see \cite[Theorem 5.7.11]{Analysis1}), $\p$ restricts uniquely to a bounded linear operator
\begin{equation}
\label{eq:boundedness_p_H_sq}
\p :H^{s,q}(\Tor^d;\R^d) \to H^{s,q}(\Tor^d;\R^d) \quad \text{ for }  s\in \R, \, q\in (1,\infty).
\end{equation}
For $(s,q)$ as in \eqref{eq:boundedness_p_H_sq} and $\mathcal{Y}\in \{L^q,H^{s,q},B^{s}_{q,p}\}$, we set
\begin{equation}
\label{eq:spaces_divergence_free}
\mathbb{Y}(\Tor^d):=\Big\{ f\in \mathcal{Y}(\Tor^d;\R^d)\,:\,\div\, f=0\text{ in }\D'(\Tor^d)\Big\} , \ \  \|f\|_{\mathbb{Y}(\Tor^d)}:=\|f\|_{\mathcal{Y}(\Tor^d;\R^d)}.
\end{equation}
Note that $\mathbb{Y}(\Tor^d)=\p(\mathcal{Y}(\Tor^d;\R^d))$ for $\mathcal{Y}\in \{L^q,H^{s,q},B^{s}_{q,p}\}$ and $s,q$ as above.
By standard interpolation theory (see \cite[Chapter 3.6]{SchmTr} and \cite[Theorem 1.2.4]{Tri95})
\begin{equation}
\label{eq:complex_real_interpolation}
\begin{aligned}
\mathbb{H}^{s,q}(\Tor^d)&= [\mathbb{H}^{s_0,q}(\Tor^d),\mathbb{H}^{s_1,q}(\Tor^d)]_{\theta},\\
\mathbb{B}^{s}_{q,p}(\Tor^d)&= (\mathbb{H}^{s_0,q}(\Tor^d),\mathbb{H}^{s_1,q}(\Tor^d))_{\theta,p},
\end{aligned}
\end{equation}
for all $s_0,s_1\in \R$, $\theta\in (0,1)$, $s:=(1-\theta)s_0+\theta s_1$ and $p,q\in (1,\infty)$.

\subsection{Main assumptions and definitions}
\label{ss:main_assumption_definition}
Here we collect the main assumptions and definitions. Further hypotheses will be given where needed.  Below $\Progress$ and $\Borel$ denotes the progressive and Borel $\sigma$-algebra, respectively.

\begin{assumption}
\label{ass:NS}
Let $d\geq 2$.
We say that \emph{Assumption \ref{ass:NS}$(p,\kappa,q,\s)$} holds if $\s\in (-1,0]$, and one of the following two is satisfied:
\begin{itemize}
\item $q\in [2,\infty)$, $p\in (2,\infty)$ and $\a\in [0,\frac{p}{2}-1)$;
\item $q=p=2$ and $\a=0$,
\end{itemize}
and the following conditions hold:
\begin{enumerate}[{\rm(1)}]
\item  For all $i,j\in \{1,\dots,d\}$ and $n\geq 1$, the mappings $a^{i,j}, b^j_n,h^{i,j}_n,\hp^j_{n}:\R_+\times \O\times \Tor^d\to \R$ are $\Progress\otimes \Borel(\Tor^d)$-measurable.
\item\label{it:max_reg_regularity_a_btwod} There exist $M>0$ and $\eta>\max\{d/\xi,-\delta\}$, with $\xi\in [2, \infty)$ such that, a.s.\ for all $t\in \R_+$ and $i,j\in \{1,\dots,d\}$,
\begin{align*}
&\|a^{i,j}(t,\cdot)\|_{H^{\eta,\xi}(\Tor^d)}
+
\|(\btwod_{n}^{j}(t,\cdot))_{n\geq 1}\|_{H^{\eta,\xi}(\Tor^d;\ell^2)}&\\
& \  +
\|(\hp^{j}_{n}(t,\cdot))_{n\geq 1}\|_{H^{\eta,\xi}(\Tor^d;\ell^2)}+
\|(h^{i,j}_n(t,\cdot))_{n\geq 1}\|_{H^{\eta,\xi}(\Tor^d;\ell^2)}
\leq M.
\end{align*}
\item\label{it:max_reg_measurability_a_btwod} There exists $\ellip>0$ such that, a.s.\ for all $t\in \R_+$, $\Upsilon=(\Upsilon_i)_{i=1}^d\in \R^d$ and $x\in \Tor^d$,
$$
\sum_{i,j=1}^d\Big[a^{i,j}(t,x)
-\frac{1}{2}\Big(
\sum_{n\geq 1}\btwod_n^i(t,x) \btwod_n^j(t,x)\Big)\Big]\Upsilon_i\Upsilon_j \geq \ellip |\Upsilon|^2.
$$
\item\label{it:NS_g_force_estimatesmeas} For all $j\in \{0,\dots,d\}$ and $n\geq 1$, the maps $\Fd_j,\Gforce_{n}:\R_+\times \O\times \Tor^d\times \R^d\to \R^d$ are $\Progress\otimes \Borel(\Tor^d\times \R^d)$-measurable.
\item\label{it:NS_g_force_estimates}  For all $j\in \{0,\dots,d\}$,
$
\Fd_j(\cdot,0)\in L^{\infty}(\R_+\times \O\times \Tor^d;\R^d)
$,
$
(\Gforce_n(\cdot,0))_{n\geq 1}\in L^{\infty}(\R_+\times\O\times \Tor^d; \ell^2)
$,
and a.s.\ for all $t\in \R_+$, $x\in \Tor^d$ and $y,y'\in \R^d$,
\begin{multline*}
 |\Fd_j(t,x,y)-\Fd_j(t,x,y')|
 +\|(\Gforce_n(t,x,y)-\Gforce_n(t,x,y'))_{n\geq 1}\|_{\ell^2}
\lesssim (1+|y|+|y'|)|y-y'|.
\end{multline*}
\end{enumerate}
\end{assumption}

The parameter $p$ will be used for $L^p$-integrability in time with weight $t^{\a}\, \dd t$. The parameter $q$ will be used for integrability in space with Sobolev smoothness $1+\delta$.
In the main results below we will always use $\s\in [-\frac{1}{2},0]$. However, in Section \ref{s:Navier_Stokes} we will also consider $\delta\in (-1,0]$.
By Sobolev embeddings and Assumption \ref{ass:NS}\eqref{it:max_reg_regularity_a_btwod} we have
\begin{equation}
\label{eq:L_infty_a_b}
\|a^{i,j}\|_{C^{\eta-\frac{d}{\xi}}(\Tor^d)}+ \|(b^j_n)_{n\geq 1}\|_{C^{\eta-\frac{d}{\xi}}(\Tor^d;\ell^2)}\lesssim_{\eta,\xi}
M.
\end{equation}
The same also holds for $(h^{j,k}_n)_{n\geq 1}$ and $(\hp^j_n)_{n\geq 1}$.
Below, we often write $b^j:=(b_n^j)_{n\geq1}$ and similar for $g^j,h^{i,j},\hp^j$ for convenience.

\begin{remark}\
\label{r:ass_NS}
\begin{itemize}
\item
If Assumption \ref{ass:NS} holds for $\delta=0$, then it also holds for some $\delta<0$.
\item The regularity assumption on $a^{i,j},b^j$ in \eqref{it:max_reg_regularity_a_btwod} also fit naturally in the case of stochastic Navier-Stokes equations in Stratonovich form, see \eqref{eq:stratonovich_correction} and the text below it. Indeed, if $b^j\in H^{\eta,\xi}(\Tor^d;\ell^2)$ uniformly in $(t,\om)$, then by \eqref{eq:L_infty_a_b} and \cite[Proposition 4.1(1)]{AV21_SMR_torus} the coefficients of the divergence operator in the It\^o correction satisfy $a^{i,j}_b=\frac{1}{2}\sum_{n\geq 1}b^i_nb^j_n\in H^{\eta,\xi}(\Tor^d)$ and $(\hp_{n}^j)_{n\geq 1}=(-\frac{1}{2}b_n^j)_{n\geq1 }\in H^{\eta,\xi}(\Tor^d;\ell^2)$  both with norm estimates uniform in $(t,\om)$.
\item
Assumption \ref{ass:NS} also covers the case of $b^j$ with $\gamma$-H\"{o}lder smoothness which is the natural one as explained around \eqref{eq:regularity_btwod}. Indeed,
$
C^{\gamma}(\Tor^d;\ell^2)\hookrightarrow H^{\eta,\xi}(\Tor^d;\ell^2)
$
for all $\gamma>\eta$ and $\xi\in (1,\infty)$. Note that typically $\gamma\in (0,1)$ is small in applications. A similar remark holds for $a^{i,j}$.
\item The quadratic growth of $f_j$ and $g_n$ shows that we can allow nonlinearities which are as strong as the convection term $\div(u\otimes u)$ (see Subsection \ref{ss:scaling}). 
Moreover, the proofs show that also nonlocal nonlinearities $f_j$ and $g_n$ can be used as long as the estimates of Lemma \ref{l:hyp_H_NS} hold. 
\end{itemize}
\end{remark}

To introduce the notion of solutions to \eqref{eq:Navier_Stokes_generalized} we rewrite it as a stochastic evolution equation for the unknown velocity field $u$.
To this end, to have a divergence free stochastic part we need
$$
\nabla \wt{p}_n=(I-\p)\big[ (b_n \cdot \nabla )u +\Gforce_n(\cdot,u)\big]
$$
where $I$ is the identity operator.
Therefore, letting $h^{\cdot,k}_n:=(h^{j,k}_n)_{j=1}^d$ and setting $\wt{f}:=(\wt{f}^{\, k})_{k=1}^d$,
\begin{align}\label{eq:wtfbg}
\wt{f}^{\, k} (\cdot,u)
&:= \sum_{n\geq 1} \Big[(I-\p)\big[(b_n \cdot \nabla )u +\Gforce_n(\cdot,u)\big]\Big]\cdot h^{\cdot,k}_n\\
\nonumber
& \ \ + \sum_{n\geq 1}\div \Big(\hp_{n} \big((I-\p)\big[(b_n \cdot \nabla )u +\Gforce_n(\cdot,u)\big]\big)^k\Big),
\end{align}
we have
$
\wt{f}(\cdot,u) = \partial_h \wt{p}+\partial_{\hp}^2 \wt{p}$.
As above, $(\cdot)^j$ denotes the $j$-th component.

Applying the Helmholtz projection to the first equation in \eqref{eq:Navier_Stokes_generalized}, at least formally, the system \eqref{eq:Navier_Stokes_generalized} is equivalent to
\begin{equation}
\label{eq:Navier_Stokes_generalized_with_projection}
\left\{
\begin{aligned}
\dd u &=\p\big[\div(a\cdot \nabla u)-\div(u\otimes u)+ f_0(\cdot,u) +\div(f(\cdot,u))+\wt{f}(\cdot,u) \big]\,\dd t \\
& \qquad +\sum_{n\geq 1}\p \big[(\btwod_n \cdot \nabla ) u +\Gforce_{n}(\cdot,u)\big]\, \dd {w}_t^n,
\\ u(\cdot,0)&=u_0.
\end{aligned}\right.
\end{equation}

We are in position to define solutions to \eqref{eq:Navier_Stokes_generalized}. We recall that the sequence of independent Brownian motions $(w^n)_{n\geq 1}$ induces uniquely an $\ell^2$-cylindrical Brownian motion $W_{\ell^2}$ (see e.g.\ \cite[Example 2.12]{AV19_QSEE_1}).

\begin{definition}
\label{def:sol_NS}
Let Assumption \ref{ass:NS}$(p,\kappa,q,\s)$ be satisfied.
\begin{enumerate}[{\rm(1)}]
\item Let $\sigma$ be a stopping time and $u:[0,\sigma)\times \O\to \Hs^{1+\s,q}(\Tor^d)$ be a stochastic process.
 $(u,\sigma)$ is called a \emph{local $(p,\kappa,q,\s)$-solution} to \eqref{eq:Navier_Stokes_generalized} if there exists a sequence of stopping times $(\sigma_j)_{j\geq 1}$ such that the following hold.
\begin{itemize}
\item $\sigma_j\leq \sigma$ a.s.\ for all $j\geq 1$ and $\lim_{j\to \infty} \sigma_j=\sigma$ a.s.;
\item For all $j\geq 1$, the process $
\one_{[0,\sigma_j]\times \O}u
$ is progressively measurable;
\item a.s.\ for all $j\geq 1$, we have $u\in L^p(0,\sigma_j,w_{\a};\Hs^{1+\s,q}(\Tor^d))$ and
\begin{equation}
\label{eq:integrability_condition}
\begin{aligned}
-\div(u\otimes u)+f_0(\cdot,u) +\div(f(\cdot,u))+\wt{f}(\cdot,u)&\in L^p(0,\sigma_j,w_{\a};H^{-1+\s,q}(\Tor^d;\R^d)),\\
\big(\Gforce_{n}(\cdot,u)\big)_{n\geq 1} &\in  L^p(0,\sigma_j,w_{\a};H^{\s,q}(\Tor^d;\ell^2(\R^d))).
\end{aligned}
\end{equation}
\item a.s.\ for all $j\geq 1$ the following equality holds for all $t\in [0,\sigma_j]$:
\begin{equation}
\label{eq:integral_equation_Navier_Stokes}
\begin{aligned}
u(t)-u_0& =  \int_0^t \p\big[\div(a\cdot \nabla u) -\div(u\otimes u) +  f_0(\cdot,u) +\div(f(\cdot,u))+\wt{f}(\cdot,u)\big]\,\dd s\\
& \qquad + \int_0^t \big(\one_{[0,\sigma_j]}  \p \big[(\btwod_n \cdot \nabla ) u +\Gforce_{n}(\cdot,u)\big]\big)_{n\geq 1}\,\dd  W_{\ell^2}(s).
\end{aligned}
\end{equation}
\end{itemize}
\item $(u,\sigma)$ is called a \emph{$(p,\kappa,q,\s)$-solution} to \eqref{eq:Navier_Stokes_generalized} if for any other local $(p,\kappa,q,\s)$-solution $(v,\tau)$ to \eqref{eq:Navier_Stokes_generalized} we have $\tau\leq \sigma$ a.s.\ and $u=v$ a.e.\ on $[0,\tau)\times \O$.
\item A $(p,\kappa,q,\s)$-solution $(u,\sigma)$ is called \emph{global} (in time) if $\sigma=\infty$ a.s. 
\end{enumerate}
\end{definition}

Note that $(p,q,\s,\a)$-solutions are \emph{unique} by definition.
By Assumption \ref{ass:NS} and \cite[Proposition 4.1]{AV21_SMR_torus}, a.s.\ for all $j\geq 1$,
\begin{align*}
\div(a\cdot \nabla u) &\in L^p(0,\sigma_j,w_{\a};H^{-1+\s,q}(\T^d;\R^d)),
\\ \big( (\btwod_n \cdot \nabla) u\big)_{n\geq 1}& \in  L^p(0,\sigma_j,w_{\a};H^{\s,q}(\T^d;\ell^2(\R^d))).
\end{align*}
Thus the deterministic integrals in \eqref{eq:integral_equation_Navier_Stokes} are well-defined as Bochner integrals with values in $H^{-1+\s,q}(\T^d;\R^d)$. The stochastic integral is well-defined as an $H^{\s,q}(\Tor^d;\R^d)$-valued stochastic integral (see e.g.\ \cite[Theorem 4.7]{NVW13} and use the identity \eqref{eq:gamma_identification_H} below).
In case of global $(p,\kappa,q,\s)$-solutions we simply write $u$ instead of $(u,\sigma)$.

Suppose that Assumption \ref{ass:NS} holds and $(p,\a,\s,q)$ satisfies \eqref{eq:critical_equation_NS_lemma_hypothesis_H}. If one has
\begin{equation}
\label{eq:regularity_local_solutions_j}
u\in \bigcap_{\theta\in [0,1/2)} H^{\theta,p}(0,\sigma_j,w_{\a};\Hs^{1+\s-2\theta,q}(\Tor^d)) \ \ \text{ a.s.\ for all }j\geq 1,
\end{equation}
then from Step 1 of Theorem \ref{t:NS_critical_localgeneralkappa} and \cite[Lemma 4.10 and 4.12]{AV19_QSEE_1}, one can see that \eqref{eq:integrability_condition} holds.

\subsection{Local existence and regularization}
\label{ss:NS_statements}

The following is our main local well-posedness result. It is formulated for the space of critical initial data. The general case is covered by Theorem \ref{t:NS_critical_localgeneralkappa}, where $\delta\in (-1, 1/2)$ is included as well.
Global well-posedness for small initial data will be discussed afterwards in Theorem \ref{t:global_small_data}, and the special case $d=2$ is considered in Theorem \ref{t:NS_initial_data_large_two_dimensional}.
\begin{theorem}[Local well-posedness]
\label{t:NS_critical_local}
Let Assumption \ref{ass:NS}$(p,\kappa,q,\s)$ be satisfied and suppose that one of the following conditions holds:
\begin{itemize}
\item $ \delta\in [-\frac{1}{2} ,0]$, $\frac{d}{2+\s}<q<\frac{d}{1+\s}$, $\frac{2}{p}+\frac{d}{q}\leq 2+\s$, and $\a=\a_{\crit}=-1+\frac{p}{2}\big(2+\s-\frac{d}{q}\big)$;
\item $\delta = \kappa=\kappa_{\crit} =0$ and $p=q=d=2$.
\end{itemize}
Then for all $u_0\in L^0_{\F_0}(\O;\Bs^{\frac{d}{q}-1}_{q,p}(\Tor^d))$,
\eqref{eq:Navier_Stokes_generalized} has a unique $(p,\a_{\crit},q,\s)$-solution $(u,\sigma)$ such that a.s.\ $\sigma>0$ and
\begin{equation}
\label{eq:NS_regularity_near_0}
u\in L^p_{\rm loc}([0,\sigma),w_{\a_{\crit}};\Hs^{1+\s,q}(\Tor^d))\cap
C([0,\sigma);\Bs^{\frac{d}{q}-1}_{q,p}(\Tor^d)).
\end{equation}
Moreover, $(u,\sigma)$ instantaneously regularizes in time and space: a.s.
\begin{align}
\label{eq:H_regularization_NS}
u & \in H^{\theta,r}_{\rm loc}(0,\sigma;\Hs^{1-2\theta,\zeta}(\Tor^d)) \ \
\text{ for all } \theta\in [0,1/2), \ r,\zeta\in (2,\infty),
\\
u& \in C^{\theta_1,\theta_2}_{\rm loc} ((0,\sigma)\times\Tor^d;\R^d) \ \ \text{ for all } \theta_1\in [0,1/2),\ \theta_2\in (0,1).\label{eq:C_regularization_NS}
\end{align}
\end{theorem}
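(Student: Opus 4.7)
The plan is to cast \eqref{eq:Navier_Stokes_generalized_with_projection} as an abstract semilinear stochastic evolution equation on the divergence-free Bessel-potential scale, and then apply the local well-posedness theorem of \cite{AV19_QSEE_1} together with the instantaneous regularization results of \cite{AV19_QSEE_2}. The three inputs required are stochastic maximal $L^p_\a$-regularity for the turbulent Stokes operator, critical semilinear estimates for the nonlinearities, and the identification of the weighted trace space with $\Bs^{d/q-1}_{q,p}(\T^d)$.

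First I would fix $X_0 := \Hs^{-1+\s,q}(\T^d)$ and $X_1 := \Hs^{1+\s,q}(\T^d)$, and set
\begin{equation*}
A(t) u := -\p\,\div(a(t,\cdot)\cdot\nabla u),\qquad B_n(t) u := \p(b_n(t,\cdot)\cdot\nabla) u.
\end{equation*}
The first analytic step is to apply the main stochastic maximal regularity result of Section \ref{s:max_reg_linear} at the smoothness level $-1+\s$, which under Assumption \ref{ass:NS}\eqref{it:max_reg_regularity_a_btwod}--\eqref{it:max_reg_measurability_a_btwod} yields $(A,B)\in\MRta$ in the pair $(X_0,X_1)$ for every admissible $\a$. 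The real interpolation identity \eqref{eq:complex_real_interpolation} then gives
\begin{equation*}
(X_0,X_1)_{1-\frac{1+\a}{p},p} = \Bs^{1+\s-\frac{2(1+\a)}{p}}_{q,p}(\T^d),
\end{equation*}
and plugging in $\a = \a_{\crit} = -1 + \frac{p}{2}(2+\s - d/q)$ collapses the smoothness to $d/q-1$, matching the hypothesis on $u_0$.

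The core task is to verify the critical nonlinearity estimates of \cite[Theorem 4.8]{AV19_QSEE_1}. The dominant term is the convection $u\mapsto -\p\,\div(u\otimes u)$; the lower-order drifts $\p f_0$, $\p\,\div(f(\cdot,u))$, the pressure correction $\p\wt{f}$ with $\wt{f}$ as in \eqref{eq:wtfbg}, and the noise nonlinearity $u\mapsto (\p g_n(\cdot,u))_{n\geq 1}$ are all at most quadratic by Assumption \ref{ass:NS}\eqref{it:NS_g_force_estimates} together with the continuity \eqref{eq:boundedness_p_H_sq} of $\p$, and are handled in parallel via composition estimates as in \cite[Lemmas 4.10, 4.12]{AV19_QSEE_1}. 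For the convection, choosing the critical interpolation exponent $\beta\in(1-\frac{1+\a_{\crit}}{p},1)$ prescribed by the scaling identity of \cite{AV19_QSEE_1}, Sobolev product estimates on the Bessel scale deliver the bilinear bound
\begin{equation*}
\|\div(u\otimes u) - \div(v\otimes v)\|_{X_0}\lesssim (\|u\|_{X_\beta}+\|v\|_{X_\beta})\|u-v\|_{X_\beta},\qquad \s\in[-\tfrac{1}{2},0].
\end{equation*}
The range constraint $q<2d$ enters precisely here: it is what keeps $X_\beta$ within the regime where this quadratic product rule into $\Hs^{-1+\s,q}(\T^d)$ is valid. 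The Hilbertian case $p=q=d=2$, $\s=\a=0$ falls in the subcritical regime and fits into the same framework with easier estimates.

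With these ingredients in hand, \cite[Theorem 4.8]{AV19_QSEE_1} produces the unique $(p,\a_{\crit},q,\s)$-solution $(u,\sigma)$ with $\sigma>0$ a.s.\ and the regularity \eqref{eq:NS_regularity_near_0}. The instantaneous regularization \eqref{eq:H_regularization_NS} then follows by the weighted-time bootstrap of \cite{AV19_QSEE_2}: at any positive time $\varepsilon>0$ the process $u(\varepsilon)$ lies in a strictly subcritical space by \eqref{eq:NS_regularity_near_0}, so SMR can be re-applied with larger integrability exponents $(r,\zeta)$ and successively improved weights, iterating to climb the Sobolev scale. The H\"older statement \eqref{eq:C_regularization_NS} follows from \eqref{eq:H_regularization_NS} by Sobolev embedding in both the time and space variables once $(r,\zeta)$ are taken large enough. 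The main obstacle throughout is establishing $(A,B)\in\MRta$ at the low smoothness level $-1+\s$ allowed by Assumption \ref{ass:NS} --- the mere H\"older regularity of the $b_n$ rules out perturbation from the constant-coefficient Stokes case and requires the dedicated commutator and localisation arguments developed in Section \ref{s:max_reg_linear}; once SMR is granted, the semilinear step proceeds essentially mechanically.
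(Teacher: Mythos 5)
Your proposal follows the paper's proof essentially step for step: recast \eqref{eq:Navier_Stokes_generalized_with_projection} as a semilinear SEE on the pair $\Hs^{-1+\s,q}/\Hs^{1+\s,q}$, invoke the stochastic maximal $L^p$-regularity for the turbulent Stokes couple from Theorem~\ref{t:maximal_Stokes_Tord}, verify the critical quadratic estimates (the paper's Lemma~\ref{l:hyp_H_NS}), read off $\Bs^{d/q-1}_{q,p}$ as the weighted trace space for $\a_{\crit}$, apply \cite[Theorem 4.8]{AV19_QSEE_1}, and then bootstrap via the weighted arguments of \cite{AV19_QSEE_2} (the paper actually proves the slightly more general Theorem~\ref{t:NS_critical_localgeneralkappa} and specializes). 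The one point where you are imprecise is the pressure correction $\wt{f}$: it contains the linear first-order piece $\sum_n\big[(I-\p)(b_n\cdot\nabla)u\big]\cdot h_n^{\cdot,k}$, which is not a Nemytskii-type composition and is not quadratic in the same sense as $f_0,f,g$; the paper absorbs this linear part into $A_{\Stok}$ (so that Theorem~\ref{t:maximal_Stokes_Tord}, including its general-$h$ case, treats it directly), keeping only the nonlinear remainder $f_{g,h}$ in $F$, whereas your $A(t)u:=-\p\,\div(a\cdot\nabla u)$ omits it and hence would require estimating this term in $F$ separately as a subcritical linear perturbation with $\beta$ close to $1$ (using $\eta>-\delta$), not via the composition estimates you cite for the zeroth-order nonlinearities. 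This is not a gap, just a bookkeeping variant, and the rest of your plan matches the paper's.
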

In case $p>2$, one can check that by the conditions on $p$ and $q$ one has $\a_{\crit}\in [0,\frac{p}{2}-1)$.
The assertions \eqref{eq:H_regularization_NS}-\eqref{eq:C_regularization_NS} show an instantaneous regularization for solutions to \eqref{eq:Navier_Stokes_generalized}. By Sobolev embedding one can check that \eqref{eq:H_regularization_NS} implies \eqref{eq:C_regularization_NS}.
In particular, $u$ becomes \emph{almost} $C^1$ in space for rough initial data $u_0$, rough coefficient $a^{ij}$, $h^{ij}$, and rough nonlinearities $f_0, f$ and $g$. If $\eta>\max\{d/\xi,1/2\}$ in Assumption \ref{ass:NS}, then we can set $\delta=-1/2$, and therefore taking $q\uparrow 2d$ it suffices to know that for some $p>2$ and $\varepsilon>0$,
\[u_0\in L^0_{\F_0}(\Omega;\Bs^{\varepsilon-\frac{1}{2}}_{2d,p}(\Tor^d)).\]
It would be interesting to know whether this is optimal.

\begin{remark}\
\label{r:invariance}
\begin{enumerate}[{\rm(1)}]
\item
If $p>2$, then it follows from the proof of Theorem \ref{t:NS_critical_local} that the following weighted regularity holds up to $t=0$:
\begin{equation*}
u\in H^{\theta,p}_{\loc}([0,\sigma);w_{\a_{\crit}};\Hs^{1+\s,q}(\Tor^d)) \ \  \text{for all} \ \theta\in [0,1/2).
\end{equation*}
\item\label{it:local_continuity}
Similar to \cite[Proposition 2.9]{AV22_reaction_local}, the solution provided by  Theorem \ref{t:NS_critical_local} depends  continuously on the initial data on a small random time interval.
\item As in \cite[Proposition 3.5]{AV22_reaction_local}, compatibility with respect to the parameters $(p,\s,q)$ holds, i.e.\  different choices of $(p,\s,q)$ lead to the same solution in Theorem \ref{t:NS_critical_local}.
\item\label{it:Ld_data} ($\Ls^d$-data).
Setting $q=d$ it follows from $L^d(\Tor^d;\R^d)\hookrightarrow
B^0_{d,p}(\Tor^d;\R^d)$ for $p\in [d,\infty)$ that Theorem \ref{t:NS_critical_local} (with $\s<0$) also applies to initial data from the scaling invariant space $L^0_{\F_0}(\Omega;\Ls^d(\Tor^d))$, see Subsection \ref{ss:scaling}.
By Remark \ref{r:ass_NS}, it is enough to know that Assumption \ref{ass:NS}$(p,\a,d,0)$ holds where $\a=-1+\frac{p}{2}(1+\s)$ and $p\geq d$.
\end{enumerate}
\end{remark}

Next we consider \emph{higher order regularity}. We are particular interested in studying how the regularity of $b_n$ in the transport term $(b_n\cdot \nabla) u$ affects the regularity of $u$. One could expect that $u$ gains  one order of smoothness compared to $(b_n)_{n\geq 1}$, i.e.\ $\nabla u$ is as smooth as $(b_n)_{n\geq 1}$. The following result shows that this is indeed the case. Of course in order to prove such a result we also need further smoothness of the nonlinearities $f_0, f, g$. We emphasize that we do not impose further conditions on the initial data $u_0$.

The next assumption roughly says that $f_0$ and  $(f,g_n)$ are $C^{\lceil \eta\rceil}$ and $C^{\lceil \eta+1\rceil}$ in the $y$-variable, respectively; where $\eta>0$ is some fixed number (here $\lceil n \rceil:=n+1$ if $n$ is an integer).
\begin{assumption}
\label{ass:high_order_nonlinearities}
Let $\eta>0$, $f_0, f$ and $g_n$ be as in Assumption \ref{ass:NS}.
Suppose that $f_0,f$ and $g_n$ are $x$-independent. Moreover, assume that, for all $n\geq 1$ and a.e.\ on $\R_+\times \O$, the mappings  $f_0$ and $ (f,g_n)$ are $C^{\lceil \eta \rceil}$ and $C^{\lceil \eta+1 \rceil}$ in $y$, respectively; and for all $N\geq 1$ there exists $C_N>0$ such that, a.e.\ on $\R_+\times \O$, 
$$
\sum_{j=1}^{\lceil \eta\rceil } |\partial_y^j f_0(\cdot,y)|
+
\sum_{j=1}^{\lceil \eta+1\rceil }
\big(|\partial_y^j f(\cdot,y)|
+
\|(\partial_y^j g_n(\cdot,y))_{n\geq 1}\|_{\ell^2}\big)
\leq C_N \ \ \text{ for }\   |y|\leq N.
$$
\end{assumption}

\begin{theorem}[Higher order regularity]
\label{t:NS_high_order_regularity}
Let the assumptions of Theorem \ref{t:NS_critical_local} be satisfied, where
$(\eta, \xi)$ are such that Assumption \ref{ass:NS}\eqref{it:max_reg_regularity_a_btwod} holds, i.e. $\eta>\max\{d/\xi,-\delta\}$, $\xi\in [2, \infty)$, and there exists $M>0$ such that, for all $t\in \R_+$ and a.s., 
\begin{align*}
&\|a^{i,j}(t,\cdot)\|_{H^{\eta,\xi}(\Tor^d)}
+
\|(\btwod_{n}^{j}(t,\cdot))_{n\geq 1}\|_{H^{\eta,\xi}(\Tor^d;\ell^2)}&\\
& \  +
\|(\hp^{j}_{n}(t,\cdot))_{n\geq 1}\|_{H^{\eta,\xi}(\Tor^d;\ell^2)}+
\|(h^{i,j}_n(t,\cdot))_{n\geq 1}\|_{H^{\eta,\xi}(\Tor^d;\ell^2)}
\leq M.
\end{align*}
Furthermore, suppose that Assumption \ref{ass:high_order_nonlinearities} holds for such $\eta>0$.
Let $(u,\sigma)$ be the $(p,\a_{\crit},q,\s)$-solution to \eqref{eq:Navier_Stokes_generalized} provided by Theorem \ref{t:NS_critical_local}. Then, a.s.,
\begin{align}
\label{eq:H_regularization_NS_improved}
u&\in H^{\theta,r}_{\rm loc}(0,\sigma;\Hs^{1+\eta-2\theta,\xi}(\Tor^d))
&\text{for all } \theta\in [0,1/2), \, r\in (2,\infty),&
\\ u& \in  C^{\theta_1,\theta_2+\eta-\frac{d}{\xi}}_{\rm loc} ((0,\sigma)\times\Tor^d;\R^d)  &\text{for all }  \theta_1\in [0,1/2), \, \theta_2\in (0,1).&
\label{eq:C_regularization_NS_improved}
\end{align}
\end{theorem}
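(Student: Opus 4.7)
The plan is to bootstrap the regularity of $(u,\sigma)$ by applying the local well-posedness theory in a new function space scale with higher spatial smoothness, using the baseline regularity \eqref{eq:H_regularization_NS} of Theorem \ref{t:NS_critical_local} as the starting point, and identifying the two resulting solutions by uniqueness. This follows the general bootstrapping philosophy of \cite{AV19_QSEE_2}.

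First, I would set up local well-posedness for \eqref{eq:Navier_Stokes_generalized} in the scale $X_0=\Hs^{\eta-1,\xi}(\Tor^d)$, $X_1=\Hs^{1+\eta,\xi}(\Tor^d)$. The linear part is the turbulent Stokes system, for which stochastic maximal $L^r$-regularity in this scale is provided by the main result of Section \ref{s:max_reg_linear} together with Assumption \ref{ass:NS}\eqref{it:max_reg_regularity_a_btwod} applied at order $(\eta,\xi)$. For the nonlinearities, since $\eta>d/\xi$ we have $\Hs^{1+\eta,\xi}(\Tor^d)\hookrightarrow L^{\infty}$ and $H^{\eta,\xi}(\Tor^d)$ is a multiplication algebra; thus $u\mapsto \div(u\otimes u)$ maps $\Hs^{1+\eta,\xi}\to \Hs^{\eta-1,\xi}$ with a quadratic bound, the transport term $(b_n\cdot\nabla)u$ is handled by the smoothness of $b$ and the same algebra property, and the remaining nonlinearities $f_0,f,g_n$ satisfy the required Nemytskii-type estimates thanks to Assumption \ref{ass:high_order_nonlinearities} (the $x$-independence avoids loss of smoothness from the coefficients). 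The auxiliary term $\wt f(u)$ defined in \eqref{eq:wtfbg} combines the previous estimates and is treated analogously.

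Second, I would pick a small $s>0$ and exploit \eqref{eq:H_regularization_NS}: a.s.\ one has $u(s)\in\Bs^{1+\eta-2/r}_{\xi,r}(\Tor^d)$ for $r$ large, by interpolating the baseline regularity and upgrading integrability. This puts $u(s)$ in the trace space of the new scale $(X_0,X_1)$, so the local well-posedness of the first step produces a local solution $(v,\tau)$ with $v\in H^{\theta,r}_{\loc}(s,\tau;\Hs^{1+\eta-2\theta,\xi}(\Tor^d))$. A standard localization and uniqueness argument (applying Definition \ref{def:sol_NS} in the original scale to $v$, noting that the new scale embeds into the old one) then forces $v=u$ and $\tau=\sigma$ on $[s,\sigma)$. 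Since $s>0$ is arbitrary, this yields \eqref{eq:H_regularization_NS_improved}. Finally, \eqref{eq:C_regularization_NS_improved} is a direct consequence of \eqref{eq:H_regularization_NS_improved} via the Sobolev embedding in both time (for $r$ large) and space ($\Hs^{1+\eta-2\theta,\xi}\hookrightarrow C^{\theta_2+\eta-d/\xi}$ for $\theta$ close to zero).

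The main obstacle is that, because $b$ is rough, one cannot iterate classical parabolic bootstrap to arbitrarily high smoothness: the regularity of $u$ is limited by that of $b$. Consequently the increase of spatial smoothness from $1+\delta$ to $1+\eta$ must be carried out essentially in a single step, calibrated exactly to the $H^{\eta,\xi}$-smoothness of $b$ and $a$. The technical heart is therefore the $H^{s,q}$-stochastic maximal regularity of the turbulent Stokes system with precisely this low smoothness of coefficients (Section \ref{s:max_reg_linear}), combined with the uniqueness of $(u,\sigma)$ across the two different function space scales.
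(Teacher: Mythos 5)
Your overall strategy (bootstrap into a higher scale, then identify by uniqueness) is in the right spirit, and the framework of \cite[Section 6]{AV19_QSEE_2} is indeed what the paper invokes. However, there is a genuine gap in the central step, and your heuristic ``the increase of spatial smoothness from $1+\delta$ to $1+\eta$ must be carried out essentially in a single step'' is exactly where it breaks.

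The problem is the claimed trace-space membership. You assert that for small $s>0$, one has $u(s)\in\Bs^{1+\eta-2/r}_{\xi,r}(\Tor^d)$ a.s.\ for large $r$, ``by interpolating the baseline regularity and upgrading integrability.'' But the baseline regularity \eqref{eq:H_regularization_NS} only gives $u\in H^{\theta,r}_{\loc}(0,\sigma;\Hs^{1-2\theta,\zeta})$ for $\theta\in[0,1/2)$, so at a fixed positive time $u(s)\in\Hs^{1-\varepsilon,\zeta}$ for every $\varepsilon>0$ and $\zeta<\infty$ — spatial smoothness strictly below $1$. No interpolation or integrability upgrade can push this above $1$. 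Since $\eta>d/\xi>0$, the target trace space $\Bs^{1+\eta-2/r}_{\xi,r}$ has smoothness $>1$ for $r$ large, so the hypothesis that $u(s)$ lands there is false whenever one tries to reach the full order $1+\eta$ in a single jump. Even with a time weight $\widehat\alpha\in[0,\frac{\widehat r}{2}-1)$, the constraint of \cite[Theorem 6.3]{AV19_QSEE_2} (namely $\wh{Y}^{\Tr}_{\widehat\alpha,\widehat r}\supseteq Y^{\Tr}_{r}$ plus the compatibility condition on the complex interpolation scales) caps the smoothness gain of one bootstrap step at roughly $1-2/r<1$. If $\eta\geq 1$, which the hypotheses permit (Assumption \ref{ass:NS} only requires $\eta>\max\{d/\xi,-\delta\}$), a single application cannot bridge the gap.

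The paper avoids this by iterating in small steps: it picks $N$ large enough that $\eta_0<N$ and raises the spatial smoothness in increments $s_k=k\eta_0/N$, applying \cite[Theorem 6.3]{AV19_QSEE_2} at each increment with $\widehat\alpha=\frac{r}{2}\frac{\eta_0}{N}$ (Step 1), and then a further finite iteration to lower the integrability exponent from $\xi_0$ down to $\xi$ while continuing the smoothness increase (Step 2). Because each incremental gain is $<1-2/r$, the trace-space matching that you are trying to shortcut is satisfied at every step. Lemma \ref{l:hyp_H_NS_high_order} plays the role you assign to the algebra property of $H^{\eta,\xi}$ and the Nemytskii estimates, so that part of your plan is on target; the missing idea is the discretization of the smoothness gain into sub-unit increments. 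Your claim that the step must be calibrated ``exactly'' to $\eta$ in one shot reverses the actual obstruction: the roughness of $b$ limits the \emph{endpoint} $1+\eta$, not the ability to iterate up to it.
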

From the above theorem one can see how the regularity and integrability of order $\eta$ and $\xi$ of the coefficients appear in \eqref{eq:H_regularization_NS_improved} and \eqref{eq:C_regularization_NS_improved}. In particular, \eqref{eq:H_regularization_NS_improved} with $\theta=0$ shows that the regularity of $u$ is one order higher than the regularity of $(a,b,h,\hp)$.
Under additional smoothness conditions in space, one can also allow $x$-dependent $f_0$, $f$ and $g_n$ in the above.

\begin{remark}
If $u_0\in L^0_{\F_0}(\Omega;\Bs^{1+\eta - \frac{2}{r}}_{\xi,r}(\Tor^d))$ for some fixed $r\in (2,\infty)$, then one can check from the proofs that the regularity result \eqref{eq:H_regularization_NS_improved} (for the fixed $r$) holds locally on $[0,\sigma)$ instead of $(0,\sigma)$. However, this will not be used in the sequel.
\end{remark}

\subsection{Stochastic Serrin's blow-up criteria}
\label{ss:Serrin_statement}
In this section we state some blow-up criteria for \eqref{eq:Navier_Stokes_generalized} which can be seen as a stochastic analogue of the \textit{Serrin}'s criteria for Navier-Stokes equations  (see e.g.\ \cite[Theorem 11.2]{LePi}).

\begin{theorem}[Stochastic Serrin's criteria]
\label{t:NS_serrin}
Let the conditions of Theorem \ref{t:NS_critical_local} be satisfied and let $(u,\sigma)$ be the $(p,\a_{\crit},q,\s)$-solution to \eqref{eq:Navier_Stokes_generalized}.
Suppose that Assumption \ref{ass:NS}$(p_0,0,q_0,\delta_0)$ holds in one of the following cases:
\begin{itemize}
\item $\delta_0\in [-\frac{1}{2},0]$, $\frac{d}{2+\delta_0}<q_0<\frac{d}{1+\delta_0}$
and $\frac{2}{p_0}+\frac{d}{q_0}\leq 2+\delta_0$;
\item $\delta_0=0$ and $p_0=q_0=d=2$.
\end{itemize}
Then for each $0<\varepsilon<T<\infty$:
\begin{equation}
\label{eq:Serrin_NS_H}
\P\Big(\varepsilon<\sigma<T,\,\|u\|_{L^{p_0}(\varepsilon,\sigma;H^{\gamma_0,q_0}(\Tor^d;\R^d))}<\infty\Big) =0, \ \ \text{where} \ \ \gamma_0=\frac{2}{p_0}+\frac{d}{q_0}-1.
\end{equation}
In particular, for all $0<\varepsilon<T<\infty$ the following hold:
\begin{enumerate}[{\rm(1)}]
\item \label{it:blow_up_NS_critical_space_not_sharp_B}
$
\displaystyle{
\P\Big(\varepsilon<\sigma<T,\,\sup_{t\in [\varepsilon,\sigma)}\|u(t)\|_{B^{d/q_0-1}_{q_1,\infty}(\Tor^d;\R^d)}<\infty\Big)=0
}
$
if $q_1>q_0$;
\item\label{it:Serrin_NS_L} $\displaystyle \P\Big(\varepsilon<\sigma<T,\,\|u\|_{L^{p_0}(\varepsilon,\sigma;L^{q_0}(\Tor^d;\R^d))}<\infty\Big)=0$ if $p_0$ satisfies $\frac{2}{p_0}+\frac{d}{q_0}=1$.
\end{enumerate}
\end{theorem}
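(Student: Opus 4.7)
The plan is to argue by contradiction, exploiting the fact that $\gamma_0 = \frac{2}{p_0} + \frac{d}{q_0} - 1$ makes $L^{p_0}(H^{\gamma_0,q_0})$ scale-invariant under the Navier--Stokes rescaling of Subsection \ref{ss:scaling}. Fix $T,\varepsilon>0$ and suppose the event
\[
\Gamma := \big\{\varepsilon < \sigma < T,\ \|u\|_{L^{p_0}(\varepsilon,\sigma;H^{\gamma_0,q_0}(\Tor^d;\R^d))} < \infty\big\}
\]
has positive probability. By the instantaneous regularization \eqref{eq:H_regularization_NS} of Theorem \ref{t:NS_critical_local}, on $\{\sigma>\varepsilon\}$ one has $u(\varepsilon)\in\Bs^{d/q_0-1}_{q_0,p_0}(\Tor^d)$ a.s., hence by uniqueness $(u|_{[\varepsilon,\sigma)},\sigma)$ is the unique $(p_0,0,q_0,\delta_0)$-solution of \eqref{eq:Navier_Stokes_generalized} starting from $u(\varepsilon)$, i.e.\ we switch to the unweighted framework with parameters $(p_0,0,q_0,\delta_0)$ for the remainder of the argument.

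\textbf{Bootstrap via stochastic maximal regularity.} By absolute continuity of the integral, on $\Gamma$ and for any prescribed $\eta>0$ there exists a stopping time $\tau_\eta<\sigma$ (truncated on the complement of $\Gamma$ so as to remain measurable) such that $\|u\|_{L^{p_0}(\tau_\eta,\sigma;H^{\gamma_0,q_0})}\leq\eta$. At the critical scaling, product/H\"older inequalities yield a bilinear estimate of the form
\[
\|uv\|_{H^{-1+\delta_0,q_0}}\lesssim\|u\|_{H^{\gamma_0,q_0}}\|v\|_{H^{1+\delta_0,q_0}},
\]
with analogous estimates controlling $f_0(u),\div(f(u)),\wt f(u),g(u)$ by means of the quadratic growth from Assumption \ref{ass:NS}. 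Combining these with the stochastic maximal $L^{p_0}$-regularity for the turbulent Stokes system (Section \ref{s:max_reg_linear}) applied on $[\tau_\eta,\sigma)$, smallness of $\eta$ lets one absorb the nonlinear contributions into the linear part, producing a bound
\[
\|u\|_{L^{p_0}(\tau_\eta,\sigma;H^{1+\delta_0,q_0})}+\sup_{t\in[\tau_\eta,\sigma)}\|u(t)\|_{\Bs^{d/q_0-1}_{q_0,p_0}}<\infty\quad\text{a.s.\ on }\Gamma.
\]

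\textbf{Conclusion and corollaries.} Since the critical trace norm remains bounded up to $\sigma$, the blow-up criterion of \cite{AV19_QSEE_2} (applied in the $(p_0,0,q_0,\delta_0)$-setting) forces $\sigma\geq T$ on $\Gamma$, contradicting $\sigma<T$; hence $\P(\Gamma)=0$ and \eqref{eq:Serrin_NS_H} follows. For part \ref{it:blow_up_NS_critical_space_not_sharp_B}, the torus embedding $\Bs^{d/q_0-1}_{q_1,\infty}(\Tor^d)\hookrightarrow\Bs^{d/q_0-1}_{q_0,\infty}(\Tor^d)$ for $q_1>q_0$ provides a sup-bound in $\Bs^{d/q_0-1}_{q_0,\infty}$; interpolating this with the extra smoothness $u\in L^r_{\loc}(0,\sigma;H^{s,q_0})$ (with $s$ close to $1$ and $r$ large) given by \eqref{eq:H_regularization_NS} yields $\|u\|_{L^{p_0}(\varepsilon,\sigma;H^{\gamma_0,q_0})}<\infty$ on the event in question, and \eqref{eq:Serrin_NS_H} applies. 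Part \ref{it:Serrin_NS_L} is immediate from \eqref{eq:Serrin_NS_H}, since $\frac{2}{p_0}+\frac{d}{q_0}=1$ forces $\gamma_0=0$ and hence $H^{\gamma_0,q_0}=L^{q_0}$.

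\textbf{Main obstacle.} The principal difficulty lies in establishing the sharp critical bilinear and nonlinear estimates in $H^{-1+\delta_0,q_0}$ (and $H^{\delta_0,q_0}(\ell^2)$ for the noise) uniformly in the low-regularity coefficients $(a,b,h)$ allowed by Assumption \ref{ass:NS}, and in dovetailing these with the random stopping-time truncation so as to invoke stochastic maximal regularity for the variable-coefficient turbulent Stokes system on the random interval $[\tau_\eta,\sigma)$. The pointwise multiplier estimates of \cite{AV21_SMR_torus} already highlighted in Remark \ref{r:ass_NS} will be essential to handle the roughness of $b$ and $h$.
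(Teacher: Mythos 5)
Your overall strategy for \eqref{eq:Serrin_NS_H}---restart at $\varepsilon$, switch to the unweighted $(p_0,0,q_0,\delta_0)$-framework, and invoke a blow-up criterion---mirrors the paper's approach. However, the step where you claim that ``by uniqueness $(u|_{[\varepsilon,\sigma)},\sigma)$ is the unique $(p_0,0,q_0,\delta_0)$-solution'' conceals a genuine gap: $(u,\sigma)$ is a maximal solution in the $(p,\kappa_{\crit},q,\delta)$-setting, and identifying it with the maximal solution $(v,\tau)$ of the restarted problem \eqref{eq:Navier_Stokes_generalized_epsilon} in the $(p_0,0,q_0,\delta_0)$-setting is a compatibility/extrapolation statement, not plain uniqueness. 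The inequality $\sigma\le\tau$ on $\{\sigma>\varepsilon\}$ is easy, but $\tau\le\sigma$ requires the extrapolation argument that the paper isolates as Lemma \ref{l:NS_extrapolation} (itself an application of \cite[Lemma 6.10]{AV19_QSEE_2}). Once that identification is in hand, the paper simply cites the abstract critical-norm blow-up criterion \cite[Theorem 4.11]{AV19_QSEE_2}, which already packages the smallness-absorption argument you propose to reconstruct from scratch; your re-derivation is not wrong in spirit, but it duplicates machinery that is already available and introduces further technical work (random starting times, uniform bilinear estimates) that the cited result handles internally.

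Your proof of part \eqref{it:blow_up_NS_critical_space_not_sharp_B} does not work as written. You propose to combine the sup-bound in $B^{d/q_0-1}_{q_0,\infty}$ with the regularity $u\in L^r_{\loc}(0,\sigma;H^{s,q_0})$ from \eqref{eq:H_regularization_NS} by interpolation, but ``$L^r_{\loc}$'' means membership on every compact subinterval of $(0,\sigma)$; it does \emph{not} give a finite $L^r(\varepsilon,\sigma)$-norm on the event, so the interpolation cannot produce the required $L^{p_0}(\varepsilon,\sigma;H^{\gamma_0,q_0})$ bound up to the (potentially singular) endpoint $\sigma$. The paper's argument avoids this entirely: it chooses a possibly smaller $q_1\in(q_0,\tfrac{d}{1+\delta})$, picks $p_1$ so large that $\gamma_1:=\tfrac{2}{p_1}+\tfrac{d}{q_1}-1<\tfrac{d}{q_0}-1$, and uses the \emph{single} Sobolev embedding $B^{d/q_0-1}_{q_1,\infty}\hookrightarrow H^{\gamma_1,q_1}$ to turn the assumed sup-bound directly into an $L^\infty(\varepsilon,\sigma;H^{\gamma_1,q_1})$ bound, to which \eqref{eq:Serrin_NS_H} with parameters $(p_1,q_1,\gamma_1)$ applies. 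Your treatment of part \eqref{it:Serrin_NS_L} is correct.
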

Item \eqref{it:Serrin_NS_L} is the stochastic version of the well--known Serrin's criterion. It is a special case of \eqref{eq:Serrin_NS_H} where we can even allow negative smoothness.
In particular, one can take $p_0=p$, $q_0=q$ and $\delta_0 = \delta$, but the above formulation is much more flexible as it allows us to transfer information on $\sigma$ between different settings. An interesting choice is $p_0=q_0=2$ and $\delta_0=0$ in case $d=2$, and will be used in Theorem \ref{t:NS_initial_data_large_two_dimensional} for $d=2$.
Note that by \eqref{eq:H_regularization_NS}-\eqref{eq:C_regularization_NS}, the paths of $u$ are regular enough on $(0,\sigma)$ to consider any of the above norms.

\begin{remark}\
\begin{enumerate}[{\rm (1)}]
\item Choosing $q_0,p_0$ large enough, one has $\frac{2}{p_0}+\frac{d}{q_0}<1$ (thus $\gamma_0<0$) and \eqref{eq:Serrin_NS_H} provides blow-up criteria in spaces of \emph{negative} smoothness. The same holds for  \eqref{it:blow_up_NS_critical_space_not_sharp_B} if $q_0>d$.
To see how far one can go note that for $\delta_0=-\frac{1}{2}$ and $q_0\to 2d$, $p_0\to \infty$ one would obtain $\frac{d}{q_0}-1\to -\frac{1}{2}$ in \eqref{it:blow_up_NS_critical_space_not_sharp_B} and
$\gamma_0\to -\frac{1}{2}$ in \eqref{eq:Serrin_NS_H}.
\item The \emph{space-time Sobolev index} (keeping in mind the parabolic scaling) of the space $L^{p_0}(0,t;H^{\gamma_0,q_0})$ appearing in \eqref{eq:Serrin_NS_H} and \eqref{it:Serrin_NS_L} is given by $-\frac{2}{p_0} +\gamma_0-\frac{d}{q_0}=-1$. The latter is equivalent to the Sobolev index of the space of initial data $\Bs^{d/q-1}_{q,p}(\Tor^d)$, which means that these blow-up criteria are optimal. In contrary, the space-time Sobolev index of $L^{\infty}(0,t;B^{d/q_0-1}_{q_1,p_0})$ in \eqref{it:blow_up_NS_critical_space_not_sharp_B} is given by $\frac{d}{q_0}-1-\frac{d}{q_1}>-1$ which is sub-optimal. We do not know if \eqref{it:blow_up_NS_critical_space_not_sharp_B} holds in the borderline case $q_1=q_0$.
\end{enumerate}
\end{remark}

In the deterministic setting, blow-up criteria in the critical space $L^3$ (here $d=3$ for simplicity) are known to be valid even with quantitative growth assumption in the $L^{\infty}(\varepsilon,\sigma;L^3)$-norm, see \cite[Theorem 1.4]{T_NS_blow_up}. It would be interesting to extend Theorem \ref{t:NS_serrin} into this direction by exploiting the structure of the equation \eqref{eq:Navier_Stokes_generalized}.

\subsection{Global well-posedness results}
\label{ss:global_existence_NS}

Global well-posedness for the deterministic Navier-Stokes equations is known for small initial data. The next result is its stochastic analogue.

\begin{theorem}[Global well-posedness for small and rough initial data]
\label{t:global_small_data}
Let Assumption \ref{ass:NS}$(p,\kappa,q,\s)$ be satisfied and suppose that one of the following conditions holds:
\begin{itemize}
\item $ \delta\in [-\frac{1}{2} ,0]$, $\frac{d}{2+\s}<q<\frac{d}{1+\s}$, $\frac{2}{p}+\frac{d}{q}\leq 2+\s$, and $\a=\a_{\crit}=-1+\frac{p}{2}\big(2+\s-\frac{d}{q}\big)$;
\item $\delta = \kappa=\kappa_{\crit} =0$ and $p=q=d=2$.
\end{itemize}
Assume that there are $M_{1}, M_2>0$ such that, a.s.\ for all $t\geq 0$, $x\in \Tor^d$ and $y\in \R^d$,
\begin{align}\label{eq:condfggrowtheq}
\sum_{j=0}^d|f_j(t,x,y)| + \|(g_n(t,x,y))_{n\geq 1}\|_{\ell^2} \leq M_1 +M_2(|y|+|y|^2).
\end{align}
Fix $u_0\in L^p_{\F_0}(\Omega;\Bs^{\frac{d}{q}-1}_{q,p}(\T^d))$ and
let $(u,\sigma)$ be the $(p,\a_{\crit},q,\s)$-solution to \eqref{eq:Navier_Stokes_generalized} provided by Theorem \ref{t:NS_critical_local}. For each $\varepsilon\in (0,1)$ and $T\in (0,\infty)$ there exists a constant $C_{\varepsilon,T}>0$, independent of $u_0$, such that the following assertions hold provided
\begin{equation*}
\E\|u_0\|_{B^{d/q-1}_{q,p}(\Tor^d;\R^d)}^p+M_1^p\leq C_{\varepsilon,T} .
\end{equation*}
\begin{enumerate}[{\rm(1)}]
\item $(u, \sigma)$ exists up to time $T$ with probability $>1-\varepsilon$, i.e.\ $\P(\sigma \geq T)>1-\varepsilon$.
\item There exists a stopping time $\tau\in (0, \sigma]$ a.s.\ such that $\P(\tau\geq  T)>1-\varepsilon$ and
\begin{align*}
\E\Big[\one_{\{\tau\geq  T\}} \|u \|_{H^{\theta,p}(0,T,w_{\a_{\crit}};H^{1+\s-2\theta,q}(\Tor^d;\R^d))}^p\Big]
\lesssim_{\theta} 
\E\|u_0\|_{B^{d/q-1}_{q,p}(\Tor^d;\R^d)}^p+M_1
\end{align*}
for all $\theta\in [0,\tfrac{1}{2})$,
where in case $p=q=d=2$ and $\theta>0$ the {\normalfont{LHS}} of the above estimate should be replaced by $\E\big[\one_{\{\tau\geq  T\}} \|u\|_{C([0,\tau];\Ls^2)}^2\big]$.
\end{enumerate}
\end{theorem}

Recall that $\Ls^d(\Tor^d)\embed \Bs^{0}_{d,p}(\Tor^d)$ for all $p\geq d$, see Remark \ref{r:invariance}\eqref{it:Ld_data}. Thus Theorem \ref{t:global_small_data} also proves existence for large times with high probability under a smallness assumption on $u_0$ in the scaling invariant Lebesgue space $\Ls^d(\Tor^d)$ provided $M_1$ is small as well.

Next we turn our attention to the two-dimensional setting in which one can avoid smallness of the initial data. A variation of a classical result for $\Ls^2$-initial data can be found in Theorem \ref{t:2D_global_rough_noise} in the appendix, where the results are partly based on \cite{AV19_QSEE_1}. The main new contributions in the result below are:
\begin{itemize}
\item We obtain global existence for initial data from  $\Bs^{2/q-1}_{q,p}(\Tor^2)$ for suitable $p\in [2, \infty)$ and $q\in [2, 4)$. By Sobolev embedding the latter space contains $\Ls^2(\Tor^2)$. By the restriction $q<4$ below, we see that we can allow smoothness of negative order $-\frac12+\varepsilon$ for any $\varepsilon>0$.
\item We prove arbitrary spatial regularity, and time regularity of order $C^{\frac{1}{2}-\varepsilon}$ for any $\varepsilon>0$, where the initial data is still assumed to be irregular.
\end{itemize}

\begin{theorem}[Global well-posedness in two dimensions with rough initial data]
\label{t:NS_initial_data_large_two_dimensional}
Let $d=2$. Suppose that Assumption \ref{ass:NS}$(p,\kappa,q,\s)$ holds
and that for all $n\geq 1 $, $ j\in \{1,\dots,d\}$
\begin{equation}
\label{eq:theta_equal_b_Lp_setting}
 \hp^j_n=\alpha_n b_n^j \ \  \text{ for some } \   \alpha_n\in [- \tfrac{1}{2},\infty) .
\end{equation}
Assume that one of the following conditions holds:
\begin{itemize}
\item $ \delta\in [-\frac{1}{2} ,0]$, $\frac{2}{2+\s}<q<\frac{2}{1+\s}$, $\frac{2}{p}+\frac{2}{q}\leq 2+\s$, and $\a=\a_{\crit}=-1+\frac{p}{2}\big(2+\s-\frac{2}{q}\big)$;
\item $\delta = \kappa =\kappa_{\crit}=0$ and $p=q=2$.
\end{itemize}
Assume that $\Fd$ and $\Gforce$ additionally satisfy the following linear growth assumption: There exists a $C>0$ such that,
a.s.\ for all $t\geq 0$, $ x\in \T^2$ and $y\in \R^2$,
\begin{equation}
\label{eq:theta_equal_b_Lp_setting2}
\sum_{j=0}^2 |\Fd_j(t,x,y)|+\|(\Gforce_n(t,x,y))_{n\geq 1}\|_{\ell^2}\leq C(1+|y|).
\end{equation}
Then for all $u_0\in L^0_{\F_0}(\O;\Bs^{\frac{2}{q}-1}_{q,p}(\Tor^2))$ there exists a unique \emph{global} $(p,q,\s,\a_{\crit})$-solution $u$ of \eqref{eq:Navier_Stokes_generalized} with
\begin{equation*}
u\in L^p_{\rm loc}([0,\infty),w_{\a_{\crit}};\Hs^{1+\s,q}(\Tor^2))\cap
C([0,\infty);\Bs^{\frac{2}{q}-1}_{q,p}(\Tor^2)) \text{ a.s.\ }
\end{equation*}
Moreover, a.s., 
\begin{align*}
u & \in H^{\theta,r}_{\rm loc}(0,\infty;\Hs^{1-2\theta,\zeta}(\Tor^2)) \ \
\text{ for all } \theta\in [0,1/2), \ r,\zeta\in (2,\infty),
\\
u& \in C^{\theta_1,\theta_2}_{\rm loc} ((0,\infty)\times\Tor^2;\R^2) \ \ \text{ for all } \theta_1\in [0,1/2),\ \theta_2\in (0,1).
\end{align*}
Furthermore, if the conditions of Theorem \ref{t:NS_high_order_regularity} hold for $d=2$, then $u$ satisfies \eqref{eq:H_regularization_NS_improved} and \eqref{eq:C_regularization_NS_improved} with $\sigma = \infty$.
\end{theorem}

In particular, note that if $a^{ij},b^{j},h^{ij}, c_n^j, f_j$, and $g_n$ satisfy the conditions of Theorem \ref{t:NS_high_order_regularity}  for \emph{all} $\eta>0$ and $\xi\in [2, \infty)$, then a.s.\ the paths of $u$ have regularity up to any order in space:
\[u\in C^{\theta, \infty}_{\rm loc}((0,\infty)\times\T^2;\R^2) \ \text{ a.s.\ for all} \ \theta\in [0,1/2).\]
The conditions \eqref{eq:theta_equal_b_Lp_setting} and \eqref{eq:theta_equal_b_Lp_setting2} are used for global well-posedness in the $L^2$-setting, see Appendix \ref{s:global_L_2_rough_noise}. The choice $\alpha_n= -\frac{1}{2}$ for all $n\geq 1$ corresponds to transport noise in Stratonovich form, see \eqref{eq:stratonovich_correction}.

The above regularity results of solutions to \eqref{eq:Navier_Stokes_generalized} appears to be new. It is not possible to prove this by classical bootstrapping arguments based on $L^2$-theory since the nonlinearity $\div(u\otimes u)$ is critical in this setting. Indeed, after one proves existence of a \emph{global} solution $u$ for which the paths satisfy a.s.\ $u\in L^2(0,T;H^{1,2})\cap C([0,T];L^2)$, one is tempted to prove further regularity by analyzing the term $\div(u\otimes u)$. Note that the following embeddings are sharp in dimension two
\[L^2(0,T;H^{1,2})\cap C([0,T];L^2)\hookrightarrow L^4(0,T;H^{1/2,2})\hookrightarrow L^4(0,T;L^4).\]
Therefore, we can only conclude $\div(u\otimes u)\in L^2(0,T;H^{-1,2})$ a.s.\ which is not good enough to bootstrap regularity. In order to prove the desired smoothness we will apply our recent bootstrapping method from \cite[Sections 6.2 and 6.3]{AV19_QSEE_2} which is based on arguments involving weights in time.

We conclude by remarking that in general, one can not improve the starting point $u\in L^2(0,T;H^{1,2})\cap C([0,T];L^2)$ a.s.\ of the above argument by using energy methods. Indeed, as remarked in Section \ref{s:intro}, our regularity assumptions on $a, b,h,c$, and $u_0$ do \emph{not} allow us to reformulate \eqref{eq:Navier_Stokes_generalized} in terms of the \emph{vorticity}, i.e.\ for the unknown $\mathrm{curl}\, u$. Thus besides the usual energy estimate in the space $L^2(0,T;H^1)\cap C([0,T];L^2)$, no other estimates are available for solutions to \eqref{eq:Navier_Stokes_generalized} in general.

\section{Maximal $L^p$-regularity for the turbulent Stokes couple}
\label{s:max_reg_linear}
Before we turn to the main results on the stochastic Navier-Stokes equations  with transport noise \eqref{eq:Navier_Stokes_generalized}, we analyse the linear part of the problem, which is central in our approach. In order to be as flexible as possible in later applications we obtain $L^p(s,T,w_{\a}^s;H^{1+\delta,q})$-regularity estimates for any $p,q\geq 2$, $\a\in [0,\frac{p}{2}-1)$ and $\delta\in \R$. We consider the following \emph{turbulent Stokes system} on $\Tor^d$:
\begin{equation}
\label{eq:NS_Tor_linearized}
\left\{
\begin{aligned}
\dd u +\p\big(\A  u+ \pp u\big)\, \dd t&=f \, \dd t+ \sum_{n\geq 1}\big(\p\b_n u+  g_n\big)\,\dd w_t^n,
\\
u(s)&=0.
\end{aligned}\right.
\end{equation}
Here $s\in [0,\infty)$, $(w^n)_{n\geq 1}$ is a sequence of independent standard Brownian motion on $(\O,\mathscr{A},(\F_t)_{t\geq 0},\P)$, $\p$ denotes the Helmholtz projection, and for $v=(v^k)_{k=1}^d$,
\begin{equation}
\begin{aligned}
\label{eq:NS_differential_operators_concise_form}
\A(t)v&=\big(-\div(a(t,\cdot)\cdot \nabla v^k) \big)_{k=1}^d  ,\\
 \b_n(t)v&= \big((\btwod_n(t,\cdot)\cdot \nabla )v\big)_{k=1}^d,\\
\pp(t)v&=
\Big(-\sum_{n\geq 1}\big\{ \div(\hp_{n} [(I-\p)\bb_n(t) v]^k)
+ [(I-\p)\b_n (t)v]\cdot h^{\cdot,k}_n\big\}
\Big)_{k=1}^d, \\
 \end{aligned}
\end{equation}
where $a, b,\hp,h$ are given and $[\cdot]^k$ denotes the $k$-th component.

The results below also cover the case where in \eqref{eq:NS_Tor_linearized} one additional considers non-trivial initial data and lower order terms in the leading differential operators; see the comments below Theorem \ref{t:maximal_Stokes_Tord}.

\subsection{Assumptions and main results}\label{ss:linearassmain}
In this section we employ the following conditions.

\begin{assumption}
\label{ass:max_reg_parameters}
Let $d\geq 2$, $\delta\in \R$ and $0\leq s<T<\infty$.  Suppose that one of the following holds
\begin{itemize}
\item $q\in [2,\infty)$, $p\in (2,\infty)$ and $\a\in \big[0,\frac{p}{2}-1\big)$;
\item $q=p=2$ and $\a=0$,
\end{itemize}
and that the following conditions hold:
\begin{enumerate}[{\rm(1)}]
\item\label{it:max_reg_measurability_SMR} For all $i,j\in \{1,\dots,d\}$ and $n\geq 1$, the mappings $a^{i,j},b^j_n,\hp_n^j,h^{i,j}_n:\R_+\times \O\times \Tor^d\to \R$ are $\Progress\otimes \Borel(\Tor^d)$-measurable.
\item\label{it:max_reg_regularity_a_btwod_SMR} Let $\xi\in [q',\infty)$ and $\eta>\frac{d}{q}$ be such that one of the following hold:
\begin{itemize}
\item$\s\leq 0$ and $\eta>-\s$;
\item $\s>0$, $\eta\geq \s$, $\xi\leq q$ and  $\eta-\frac{d}{\xi}\geq \s-\frac{d}{q}$.
\end{itemize}
There exists $M>0$ such that, a.s.\ for all $t\in \R_+$ and $i,j\in \{1, \ldots, d\}$,
\begin{align*}
&\|a^{i,j}(t,\cdot)\|_{H^{\eta,\xi}(\Tor^d)}
+
\|(\btwod_{n}^{j}(t,\cdot))_{n\geq 1}\|_{H^{\eta,\xi}(\Tor^d;\ell^2)}&\\
& \  +
\|(\hp^{j}_{n}(t,\cdot))_{n\geq 1}\|_{H^{\eta,\xi}(\Tor^d;\ell^2)}+
\|(h^{i,j}_n(t,\cdot))_{n\geq 1}\|_{H^{\eta,\xi}(\Tor^d;\ell^2)}
\leq M.
\end{align*}
\item\label{it:max_reg_measurability_a_btwod_Smr} There exists $\ellip>0$ such that, a.s.\ for all $t\in \R_+$, $\Upsilon=(\Upsilon_i)_{i=1}^d\in \R^d$ and $x\in \Tor^d$,
$$
\sum_{i,j=1}^d\Big[a^{i,j}(t,x)
-\frac{1}{2}\Big(
\sum_{n\geq 1}\btwod_n^i(t,x) \btwod_n^j(t,x)\Big)\Big]\Upsilon_i\Upsilon_j \geq \ellip |\Upsilon|^2.
$$
\end{enumerate}
\end{assumption}

By Sobolev embeddings and
Assumption \ref{ass:max_reg_parameters}\eqref{it:max_reg_regularity_a_btwod_SMR}, for all $i,j\in \{1,\dots,d\}$,
\begin{equation}
\label{eq:Holder_continuity_phi_h}
a^{i,j}\in C^{\alpha}(\Tor^d),
\quad \text{ and }\quad
(b_n^j)_{n\geq 1},\,(h_n^{i,j})_{n\geq 1}, \, (c_n^j)_{n\geq 1} \in C^{\alpha}(\Tor^d;\ell^2),
\end{equation}
where $\alpha:=\eta-\frac{d}{\xi}>0$. Note that
Assumption \ref{ass:max_reg_parameters} coincides with part of Assumption \ref{ass:NS}$(p,\a,q,\s)$ in the case $\s\in (-1,0]$, and of course the nonlinear parts of Assumption \ref{ass:NS}\eqref{it:NS_g_force_estimatesmeas}-\eqref{it:NS_g_force_estimates} do not play a role here. The case $\s>0$ will be used to prove the higher order regularity result of Theorem \ref{t:NS_high_order_regularity}.

Under the previous assumptions, we consider the \textit{turbulent Stokes couple}:
\begin{equation}
\begin{aligned}
\label{eq:generalizaed_Stokes_couple_divergence}
(A_{\Stok},B_{\Stok})&:\R_+\times \O\to \calL\big(\Hs^{1+\s,q}(\Tor^d),
\Hs^{-1+\s,q}(\Tor^d)\times \Hs^{\s,q}(\Tor^d;\ell^2)\big)\\
(A_{\Stok}(\cdot) ,B_{\Stok}(\cdot))u &:=\big(\p[\A(\cdot)u+ \pp(\cdot)u], (\p\b_n(\cdot) u)_{n\geq 1}\big),\qquad
u\in \Hs^{1+\s,q},
\end{aligned}
\end{equation}
where $\Hs^{s,q}$ is as in \eqref{eq:spaces_divergence_free} with $\mathcal{Y}=H^{s,q}$. By Assumption \ref{ass:max_reg_parameters}\eqref{it:max_reg_regularity_a_btwod_SMR}, a.e.\ on $\R_+\times \O$,
\begin{align}
\label{eq:estimate_AB_stok}
&\|A_{\Stok}(\cdot)v\|_{\Hs^{-1+\s,q}(\Tor^d)}
+
\|B_{\Stok}(\cdot)v\|_{\Hs^{\s,q}(\Tor^d;\ell^2)}\\
\nonumber
& \stackrel{\eqref{eq:boundedness_p_H_sq}}{\lesssim} \max_{i,j,k}\Big(
\|a^{i,j}\partial_j v^k\|_{H^{\s,q}(\Tor^d)}
+\Big\|\sum_{n\geq 1}\big[(I-\p)[(b_n\cdot \nabla) v]\big]^k h^{j,k}_{n}\Big\|_{H^{-1+\s,q}(\Tor^d)}\\
\nonumber
&
\quad +\Big\|\sum_{n\geq 1} \hp^j_n [(I-\p) ((b_n\cdot\nabla )v) ]^k \Big\|_{H^{\s,q}(\Tor^d)}+
\|(b^{j}_n(t,\cdot)\partial_j v^k)_{n\geq 1}\|_{H^{\s,q}(\Tor^d;\ell^2)}
\Big)\\
\nonumber
&\lesssim_{\s,q} M \|v\|_{\Hs^{1+\s,q}(\Tor^d)},
\end{align}
where we used \cite[Corollary 4.2 and Proposition 4.1(3)]{AV21_SMR_torus} if $\s> 0$ and $\s\leq 0$, respectively.
By Assumption \ref{ass:max_reg_parameters}\eqref{it:max_reg_measurability_SMR}, the mapping $(A_{\Stok},B_{\Stok})$ defined in \eqref{eq:generalizaed_Stokes_couple_divergence} is strongly progressively measurable.

For any stopping time $\tau:\O\to [s,T]$, $f\in L^0_{\Progress}(\O;L^1(s,\tau;\Hs^{-1+\s,q}(\Tor^d)))$, $g\in L^0_{\Progress}(\O;L^2(s,\tau;\Hs^{-1+\s,q}(\Tor^d;\ell^2)))$ we say that $u\in L^0_{\Progress}( \O;L^2(s,\tau;\Hs^{1+\s,q}))$ is a \emph{strong solution to \eqref{eq:NS_Tor_linearized} (on $[s,\tau]\times \O$)} if a.s.\ for all $t\in [s,\tau]$
$$
u(t)+\int_{s}^t A_{\Stok}(r) u(r)\, \dd r =\int_s^t f(r)\, \dd r+\int_{s}^t (B_{\Stok} (r)u(r)+g(r))\, \dd W_{\ell^2}(r),
$$
where $W_{\ell^2}$ is the $\ell^2$-cylindrical Brownian motion associated with $(w^n)_{n\geq 1}$, see \cite[Example 2.12]{AV19_QSEE_1}. Note that the stochastic integral is well-defined in $\Hs^{\s,q}(\Tor^d)$ due to \eqref{eq:estimate_AB_stok}, \cite{Ondrejat04} or \cite[Theorem 4.7]{NVW13} and $q\geq 2$.

The aim of this section is to prove the following.

\begin{theorem}[Stochastic maximal $L^p$-regularity for the turbulent Stokes system]
\label{t:maximal_Stokes_Tord}
Suppose that Assumption \ref{ass:max_reg_parameters} holds. Then for each $s\in [0,T)$, and each progressively measurable $f\in  L^p((s ,T)\times \O,w_{\a}^{s};\Hs^{-1+\delta,q})$, and
$g\in L^p((s ,T)\times \O,w_{\a}^s;\Hs^{\delta,q}(\ell^2))$, there exists a unique strong solution $u$ to \eqref{eq:NS_Tor_linearized} on $[s,T]\times \O$. Moreover, letting
\begin{align*}
  J_{p,q,\kappa}(f,g):= & \|f\|_{L^p((s ,T)\times \O,w_{\a}^{s};\Hs^{-1+\delta,q})}+\|g\|_{L^p((s ,T)\times \O,w_{\a}^{s};\Hs^{\delta,q}(\ell^2))},
\end{align*}
the following results hold, where the constants $C_i$ do not depend on $(s,f,g)$:
\begin{enumerate}[{\rm(1)}]
\item in case $p\in (2, \infty)$, $q\in [2, \infty)$, $\theta\in [0,\frac12)$,
\begin{align*}
\|u\|_{L^p((s ,T)\times \O;w_{\a}^{s}; \Hs^{1+\delta,q})} &\leq C_1 J_{p,q,\kappa}(f,g),
\\ \|u\|_{L^p(\O;\hz^{\theta,p}(s,T,w_{\a}^{s};\Hs^{1-2\theta+\delta,q}))} &\leq C_{2}(\theta) J_{p,q,\kappa}(f,g), \ \ \theta\in [0,1/2),
\\ \|u\|_{L^p(\O;C([s,T];\Bs^{1+\delta-2(1+\kappa)/p}_{q,p}))}
&\leq C_3 J_{p,q,\kappa}(f,g),
\\ \|u\|_{L^p(\O;C([s+\varepsilon,T];\Bs^{1+\delta-2/p}_{q,p}))}
&\leq C_{4}(\varepsilon) J_{p,q,\kappa}(f, g), \ \ \varepsilon\in (s,T).
\end{align*}
\item in case $p=q=2$ (and thus $\a=0$),
\begin{align*}
\|u\|_{L^2(\O;L^2(s,T;\Hs^{1+\delta,2}))} &\leq C_5 J_{2,2,0}(f,g),
\\ \|u\|_{L^2(\O;C([s,T];\Hs^{\delta,2}))} &\leq C_6 J_{2,2,0}(f,g).
\end{align*}
\end{enumerate}
\end{theorem}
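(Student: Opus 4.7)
The plan is to deduce Theorem \ref{t:maximal_Stokes_Tord} from the scalar stochastic maximal $L^p$-regularity on $\T^d$ established in \cite{AV21_SMR_torus}, by a componentwise reduction followed by a perturbation/continuity argument that absorbs the Helmholtz projection and the $h$-pressure correction. Throughout, the coercivity condition \eqref{it:max_reg_measurability_a_btwod_Smr} serves as the stochastic parabolicity condition: it is exactly what appears when computing the It\^{o} correction in an $L^2$-energy estimate, and is required already at the scalar level.

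The first step is to observe that the principal operators $v\mapsto (-\mathrm{div}(a\cdot\nabla v^k))_{k=1}^d$ and $v\mapsto ((b_n\cdot\nabla)v^k)_{k=1}^d$ act diagonally across components. Applying the scalar SMR result of \cite{AV21_SMR_torus} componentwise therefore yields all the estimates of Theorem \ref{t:maximal_Stokes_Tord} for the decoupled problem, i.e.\ without the Helmholtz projection and without the $h$-term. The multiplier assumption \eqref{it:max_reg_regularity_a_btwod_SMR}, with its asymmetry between $\delta\leq 0$ and $\delta>0$, is precisely tailored so that the paramultiplication bounds collected in \cite[Proposition 4.1]{AV21_SMR_torus} apply to the products $a^{ij}\partial_j v^k$ and $b_n^j\partial_j v^k$ in $H^{-1+\delta,q}$ and $H^{\delta,q}(\ell^2)$, respectively.

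Next, I would restore the Helmholtz projection and the $h$-correction by a homotopy argument. Since $\p$ is a bounded Fourier multiplier on $H^{s,q}$ by \eqref{eq:boundedness_p_H_sq} and is the identity on $\Hs^{s,q}$, and since $(I-\p)[(b_n\cdot\nabla)v]$ is a pure gradient, the $h$-contribution to $\mathcal{A}$ enjoys one order of smoothing compared to a bare transport term and maps $\Hs^{1+\delta,q}$ boundedly into $\Hs^{\delta,q}$; it is therefore genuinely lower order. Introducing the homotopy
\begin{equation*}
(A_\lambda,B_\lambda):=(1-\lambda)\bigl(\p(-\Delta),0\bigr)+\lambda\,(A_{\Stok},B_{\Stok}),\qquad \lambda\in[0,1],
\end{equation*}
each member satisfies Assumption \ref{ass:max_reg_parameters} with uniform constants and inherits stochastic parabolicity by convexity. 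SMR for $\lambda=0$ is classical, and the method of continuity, combined with the uniform a priori bound coming from the componentwise scalar theory plus absorption of the lower-order $h$-term, propagates SMR to $\lambda=1$. This yields the main estimate $\|u\|_{L^p(\O;L^p(s,T,w_\kappa^s;\Hs^{1+\delta,q}))}\lesssim J_{p,q,\kappa}(f,g)$.

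The ancillary estimates are then obtained in the standard way: the $\hz^{\theta,p}$-bound follows from the main bound combined with the equation, which controls $du$ in $\Hs^{-1+\delta,q}$, via interpolation between $\hz^{1,p}(w_\kappa^s;\Hs^{-1+\delta,q})$ and $L^p(w_\kappa^s;\Hs^{1+\delta,q})$; the continuous embedding into $C([s,T];\Bs^{1+\delta-2(1+\kappa)/p}_{q,p})$ is the weighted trace theorem, and the improved embedding on $[s+\varepsilon,T]$ follows by applying the unweighted trace theorem on the shifted interval, the shift being justified by the $L^p$-bound near the left endpoint. The case $p=q=2$, $\kappa=0$ is handled separately by a single It\^{o}-energy estimate on $\Hs^{\delta,2}$, where stochastic parabolicity exactly balances the noise covariation. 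The main obstacle is the perturbation step in the low-regularity regime where $\eta$ is only slightly above $d/\xi$: there the multiplications by $a,b,h$ are borderline operations on $H^{\pm 1+\delta,q}$, and one must invoke the sharp paramultiplication bounds of \cite[Proposition 4.1]{AV21_SMR_torus} to absorb the $h$-perturbation without breaking the quadratic form underlying stochastic parabolicity.
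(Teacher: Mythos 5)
Your reduction to the scalar theory of \cite{AV21_SMR_torus} handles the \emph{unprojected} operators $\A$ and $\b_n$, but the equation \eqref{eq:NS_Tor_linearized} involves the \emph{projected} operators $\p\A$ and $\p\b_n$, and for variable coefficients these do not agree on divergence-free fields. Writing $\p=I-\nabla\q$ as in \eqref{eq:def_Helmholtz_proposition}, the difference $\p\A u-\A u=-\nabla\q(\A u)$ has the same order as $\A u$ itself, since $\q$ gains exactly one derivative by \eqref{eq:NS_Tor_mapping_property_p}: for $u\in\Hs^{1+\delta,q}$ one has $\A u\in H^{-1+\delta,q}$, hence $\nabla\q(\A u)\in H^{-1+\delta,q}$ as well. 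It is therefore not a lower-order perturbation and cannot be absorbed. The method of continuity needs a priori bounds for the \emph{projected} couple $(A_{\Stok},B_{\Stok})$ along your homotopy, and the componentwise scalar SMR you invoke does not deliver them: the scalar result transfers to the projected setting only when $\p$ commutes with the coefficient operators, i.e.\ when the coefficients are spatially constant.

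This gap is precisely what Lemma~\ref{l:a_priori_estimates_NS} fills, and it is the genuinely new content of Section~\ref{s:max_reg_linear}. Its proof freezes coefficients locally and uses a partition of unity, but a complication arises that your proposal does not address: multiplying $u$ by a cut-off $\pi_\lambda$ destroys $\div\,u=0$. One then decomposes $\pi_\lambda u=\nabla\J_\lambda u+\p(\pi_\lambda u)$ and must control the resulting commutators involving the nonlocal operators $\q$ and $\J_\lambda$, which are lower order only because of the smoothing in Lemma~\ref{l:J_operator} and \eqref{eq:NS_Tor_mapping_property_p}; establishing this is the bulk of Steps~4--6 of the lemma. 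Only at the frozen-coefficient level (Step~1) does the scalar SMR of \cite{AV21_SMR_torus} apply componentwise, because constant-coefficient $\A_y,\b_y$ commute with $\p$. Your treatment of the $h$-term as lower order, the choice of homotopy, and the outline of the ancillary trace estimates are consistent with the paper, but the ``main obstacle'' you identify is misplaced: the real difficulty is the nonlocality of $\p$ under localization, not the borderline paramultiplication bounds.
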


By either \cite[Proposition 3.10]{AV19_QSEE_1} or \cite[Proposition 3.9]{AV19_QSEE_2}, the above maximal $L^p$-regularity result for \eqref{eq:NS_Tor_linearized} extends to the case of non-trivial initial data provided $u(s)\in L^p_{\F_s}(\O;\Bs^{1+\s-2\frac{1+\a}{p}}_{q,p})$ and the corresponding norm is taken into account in $J_{p,q,\a}$.
Similarly, by a general perturbation argument, we may allow lower order terms in the differential operators appearing in \eqref{eq:NS_Tor_linearized} by using \cite[Theorem 3.2]{AV21_SMR_torus} provided the coefficients of such lower order terms are sufficiently regular.

Theorem \ref{t:maximal_Stokes_Tord} can be seen as an extension of \cite{M02}, where the $\R^d$-case with $q=p$ was considered; see also Remark \ref{r:Rd_case} below. The proof of Theorem \ref{t:maximal_Stokes_Tord} uses similar ideas as in \cite[Section 5]{AV21_SMR_torus} in which we combined perturbation and localization arguments to obtain stochastic maximal regularity for parabolic problems on $\Tor^d$. In the present case, additional difficulties arise due the nonlocal nature of the Helmholtz projection $\p$.
The $\R^d$-case can also be covered with the same method if one imposes the following condition:

\begin{remark}[The $\R^d$-case]
\label{r:Rd_case}
Theorem \ref{t:maximal_Stokes_Tord} also holds for \eqref{eq:NS_Tor_linearized} with $\Tor^d$ replaced by $\R^d$ if the coefficients become constants as $|x|\to \infty$: There exists progressively measurable maps $\wh{a}^{i,j}:[s,T]\times \O\to \R$, $(\wh{b}^{j}_n)_{n\geq 1}:[s,T]\times \O\to \ell^2$ such that
\begin{align*}
\lim_{|x|\to \infty}\esssup_{\om\in \O} \sup_{r\in [s,T]}\Big( |a^{i,j}(r,\om,x)-\wh{a}^{i,j}(r,\om)|
+ \|(b^{j}_n(r,\om,x)-\wh{b}^{j}_n(r,\om))_{n\geq 1}\|_{\ell^2}\Big)=0,
\end{align*}
which is needed in our localization argument (see Steps 3 and 4 of the proof of Lemma \ref{l:a_priori_estimates_NS}).
For instance, if $a^{i,j}=\delta^{i,j}$ and $(b_n^j)_{n\geq 1}\in H^{\eta,\xi}(\R^d;\ell^2)$ for some $\eta>0$ and $\xi\in [q',\infty)$ such that $\eta>\frac{d}{\xi}$, then the above condition is satisfied with $\wh{b}^j_n\equiv 0$.
\end{remark}

Our methods can also be extended to the non-divergence case setting. For this one needs to modify the assumptions on $a^{i,j}$ in Assumption \ref{ass:max_reg_parameters}\eqref{it:max_reg_regularity_a_btwod_SMR}. A detailed statement for related non-divergence systems can be found in \cite[Section 5]{AV21_SMR_torus}.

Theorem \ref{t:maximal_Stokes_Tord} will be proved in Subsection \ref{ss:proof_smr_weak_setting} below. The core of the proof is the a priori estimate on small time intervals of Lemma \ref{l:a_priori_estimates_NS} which will be proven in Subsection \ref{ss:proof_lemma_small_time_interval}.

\subsection{Proof of Theorem \ref{t:maximal_Stokes_Tord}}
\label{ss:proof_smr_weak_setting}

In the rest of this section, for $s\in \R$ and $q\in (1,\infty)$, we write $L^q$, $H^{s,q}$ and $B^{s}_{q,p}$ instead of $L^q(\T^d;\R^d)$, $H^{s,q}(\T^d;\R^d)$ and $B^{s}_{q,p}(\T^d;\R^d)$.

Recall that $\p$ denotes the Helmholtz projection, see \eqref{eq:def_helmholtz_projection_Fourier}. For future convenience, we recall another construction of $\p$. Let $f\in \D'(\Tor^d;\R^d)$. Consider the following elliptic problem on $\T^d$:
\begin{equation}
\label{eq:Helmholtz_projection_equation_phi}
\left\{\begin{aligned}
\Delta \psi&=\div f,\\
\l 1 , \psi\r &=0,
\end{aligned}\right.
\end{equation}
where $\l \cdot , \cdot \r$ denotes the duality of $\D(\Tor^d)$  and $\D'(\Tor^d)$.
If we set $\q f:= \psi$, then the Helmholtz projection can be equivalently defined by
\begin{equation}
\label{eq:def_Helmholtz_proposition}
\p f:=f-\nabla\q f.
\end{equation}
Indeed, by standard Fourier methods, one can check that \eqref{eq:def_helmholtz_projection_Fourier} and \eqref{eq:def_Helmholtz_proposition} coincide. Moreover $\nabla\q$ is a projection (i.e.\ $\nabla\q=(\nabla\q)^2$) and $\q$ restricts to a map
\begin{equation}
\label{eq:NS_Tor_mapping_property_p}
 \q: H^{s,q}\to H^{s+1,q}.
\end{equation}

Below we collect some lemmata which will be useful in the proof of the main results. The following concerns an operator appearing several times in the proofs.

\begin{lemma}
\label{l:J_operator}
Let
$s\in \R$, $q\in (1,\infty)$ and $\phi\in C^{\infty}(\Tor^d)$. For any $f\in H^{s,q}(\Tor^d;\R^d)$, we set $\J_{\phi} f:=\psi$, where $\psi\in \D'(\Tor^d;\R^d)$ is the unique solution to the following elliptic problem on $\T^d$:
\begin{equation}
\label{eq:PDE_defining_J}
\left\{\begin{aligned}
\Delta \psi&=\nabla \phi\cdot f- \lb \nabla \phi, f\rb,\\
\lb 1,\psi\rb&=0.
\end{aligned}\right.
\end{equation}
Then
$
\|\J_{\phi} f\|_{H^{s+2,q}}\lesssim_{s,q} \|f\|_{H^{s,q}}.
$
\end{lemma}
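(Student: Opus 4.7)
The plan is clean and largely mechanical: the subtraction of $\langle \nabla\phi, f\rangle$ in \eqref{eq:PDE_defining_J} is engineered precisely to make the right-hand side mean-zero, after which the lemma reduces to two well-known facts on $\T^d$: (i) pointwise multiplication by a $C^\infty$ function is bounded on $H^{s,q}$ for all $s\in\R$, $q\in(1,\infty)$; and (ii) the inverse Laplacian on mean-zero distributions gains two derivatives in the Bessel-potential scale.

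First I would check that $g := \nabla\phi\cdot f - \langle \nabla\phi, f\rangle \in H^{s,q}(\T^d)$ with
\[
\|g\|_{H^{s,q}} \lesssim_{\phi,s,q} \|f\|_{H^{s,q}}.
\]
For the first summand this follows from the fact that multiplication by each $\partial_j\phi\in C^\infty(\T^d)$ defines a bounded operator on $H^{s,q}(\T^d)$ (with operator norm controlled by finitely many derivatives of $\phi$, and for negative $s$ by duality against the corresponding statement on $H^{-s,q'}$). The scalar $\langle\nabla\phi, f\rangle$ is dominated by $\|f\|_{H^{s,q}}$ through the duality $\langle H^{-s,q'}, H^{s,q}\rangle$ since $\nabla\phi\in C^\infty(\T^d)\hookrightarrow H^{-s,q'}(\T^d)$. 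Together this gives the displayed bound.

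Next, by construction $\langle 1, g\rangle = 0$, so \eqref{eq:PDE_defining_J} becomes the Poisson equation $\Delta\psi = g$ on $\T^d$ with the normalization $\langle 1, \psi\rangle = 0$, which admits a unique solution $\psi \in \mathcal{D}'(\T^d)$ given on the Fourier side by $\widehat\psi(k) = -|k|^{-2}\widehat g(k)$ for $k\neq 0$ and $\widehat\psi(0)=0$. To conclude I would apply the (periodic) Mikhlin multiplier theorem to the symbols $-k_ik_j|k|^{-2}$, which are smooth on $\Z^d\setminus\{0\}$ and $0$-homogeneous, hence give bounded Fourier multipliers on $H^{s,q}(\T^d)$ for all $s\in\R$ and $q\in(1,\infty)$ (see e.g.\ \cite[Theorem 5.7.11]{Analysis1} used for \eqref{eq:boundedness_p_H_sq} in the paper). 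This yields
\[
\|\partial_i\partial_j\psi\|_{H^{s,q}} \lesssim_{s,q} \|g\|_{H^{s,q}},\qquad i,j\in\{1,\dots,d\},
\]
and combining with $\|\psi\|_{H^{s,q}}\lesssim \|\nabla\psi\|_{H^{s,q}}\lesssim \|g\|_{H^{s-1,q}}\lesssim \|g\|_{H^{s,q}}$ (using the zero-mean normalization and the same multiplier argument for $\partial_j\Delta^{-1}$ applied one order lower) gives the claimed estimate
\[
\|\mathcal{J}_\phi f\|_{H^{s+2,q}} \lesssim_{s,q} \|g\|_{H^{s,q}} \lesssim_{\phi,s,q} \|f\|_{H^{s,q}}.
\]

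There is no real obstacle here; the mildly delicate point is just making sure that the multiplication and duality bounds work uniformly across the full range $s\in\R$, which is standard once one uses the $C^\infty$-hypothesis on $\phi$ together with a duality argument for negative smoothness. The $\phi$-dependence of the constant is unimportant for the applications, since $\phi$ will be chosen as a fixed localization function in the subsequent localization arguments for Theorem~\ref{t:maximal_Stokes_Tord}.
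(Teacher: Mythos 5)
Your proof is correct and follows essentially the same route as the paper: realize $\mathcal{J}_\phi f$ as $\Delta_N^{-1}$ (the Fourier inverse of the Laplacian on mean-zero distributions) applied to $g = \nabla\phi\cdot f - \langle\nabla\phi,f\rangle$, and use periodic Mikhlin multipliers to see that $\Delta_N^{-1}$ gains two Bessel derivatives. You additionally spell out the bound $\|g\|_{H^{s,q}}\lesssim_\phi\|f\|_{H^{s,q}}$ (smooth-function multiplication plus duality for the mean term), which the paper leaves implicit, so your write-up is slightly more complete but not a different argument.
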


\begin{proof}
Let us denote by $\Delta_N^{-1}$ the operator defined by $\widehat{(\Delta_N^{-1} f)}(0)=0$ and $\widehat{(\Delta_N^{-1} f)}(k)=\frac{1}{|k|^2} \widehat{f}(k)$ for $\Z^d \ni k\neq 0$. By standard Fourier techniques, one can show that for each $s\in \R$ and $q\in (1,\infty)$,
\begin{equation}
\label{eq:Delta_R_mapping}
\Delta_N^{-1}: \{f\in H^{s,q}:\widehat{f}(0)=0\} \to \{f\in H^{s+2,q}:\widehat{f}(0)=0\}.
\end{equation}
The claim follows by noticing that
$\J_{\phi}f=\Delta_N^{-1}(\nabla \phi\cdot f-\l \nabla \phi,f\r)$.
\end{proof}

Next we consider an estimate for a commutator operator appearing in the main localization argument.

\begin{lemma}[A commutator estimate]
\label{l:commutator_localization}
Let
$s\in \R$, $q\in (1,\infty)$ and $\phi\in C^{\infty}(\Tor^d)$. Let $[A,B]:=AB-BA$ be the commutator.
Then
$$
\big\| [\nabla\q,\phi] f \big\|_{H^{s+1,q}}\lesssim \|f\|_{H^{s,q}}.
$$
\end{lemma}
In the above and in the following, we do not distinguish between the function $\phi$ and multiplication operator $v\mapsto \phi v$.

\begin{proof}
Employing the notation used in the proof of Lemma \ref{l:J_operator}, the construction \eqref{eq:Helmholtz_projection_equation_phi}-\eqref{eq:NS_Tor_mapping_property_p} shows that
$$
\nabla \q f= \nabla \Delta^{-1}_N \div f , \ \ \text{ for } f \in \D'(\Tor^d;\R^{d}).
$$
Let $M$ the projection onto mean zero vector fields, i.e.\ $Mf :=f-\l 1,f\r$ for $f \in \D'(\Tor^d;\R^{d})$.
Now note that $\nabla \q = \nabla \Delta^{-1}_N M \div$ and
\begin{equation}
\label{eq:commutator_equality_q_proof}
[\nabla \q,\phi] = [\nabla, \phi] \Delta^{-1}_N M \div + \nabla [\Delta_N^{-1} M ,\phi ]\div + \nabla \Delta_{N}^{-1}M [\div,\phi].
\end{equation}
It is easy to see that $[\nabla, \phi] $ and $[\div,\phi]$ map continuously $H^{s,q}$ into itself. To conclude it remains to estimate the second term on the RHS\eqref{eq:commutator_equality_q_proof}. In turn it suffices to show that
\begin{equation}
\label{eq:commutator_DeltaNM_phi}
\big\| [\Delta_N^{-1} M ,\phi ] f\big\|_{H^{r,q}}\lesssim \|f\|_{H^{r-3,q}}, \quad \text{ for all }r\in \R.
\end{equation}
Note that  $ [\Delta_N^{-1} M ,\phi ] f= v_{\phi}-\phi v$ where $v:= \Delta_N^{-1} M f$, $v_{\phi}:= \Delta^{-1}_N M (\phi f)$ and
\begin{equation}
\label{eq:equation_v_u_commutator}
\Delta (\phi v- v_{\phi})= (\Delta \phi) v + 2\nabla \phi \cdot\nabla v - \phi\l 1,f \r + \l \phi, f\r.
\end{equation}
Hence \eqref{eq:commutator_DeltaNM_phi} follows from:
\begin{align*}
\big\| [\Delta_N^{-1} M ,\phi ] f\big\|_{H^{r,q}}
\lesssim
\| v\|_{H^{r-1,q}} + \|f\|_{H^{r-3,q}}
\lesssim
\|f\|_{H^{r-3,q}},
\end{align*}
where we used elliptic regularity twice and \eqref{eq:equation_v_u_commutator} in the first estimate.
\end{proof}

The following simple extension result is taken from \cite[Lemma 5.8 and Remark 5.6]{AV21_SMR_torus}.
\begin{lemma}[Extension operator]
\label{l:extension_operators}
Let $\alpha\in (0,N]$ and $\zeta\in (1,\infty)$ be such that $\alpha>\frac{d}{\zeta}$. Let $X$ be a separable Hilbert space. Then, for any $y\in \Tor^d$ and any $r\in (0,\frac{1}{8})$ there exists an extension operator
\[
\e_{y,r}^{\Tor^d}:H^{\alpha,\zeta}(\B_{\Tor^d}(y,r);X)\to H^{\alpha,\zeta}(\Tor^d;X)
\]
which satisfies the following properties:
\begin{enumerate}[{\rm(1)}]
\item\label{it:extension_constant} $\e_{y,r}^{\Tor^d} f|_{\B_{\Tor^d}(y,r)}=f$, $\e_{y,r}^{\Tor^d} c\equiv c$ for any $f\in H^{\alpha,\zeta}(\B_{\Tor^d}(y,r);X)$ and $c\in X$;
\item\label{it:extension_bounds_C_alpha} $\|\e_{y,r}^{\Tor^d}\|_{\calL(H^{\alpha,\zeta}(\B_{\Tor^d}(y,r);X),H^{\alpha,\zeta}(\Tor^d;X))}\leq C_r$ for some $C_r$ independent of $y$;
\item\label{it:extension_bounds_L_infty} $\|\e_{y,r}^{\Tor^d}\|_{\calL(C(\overline{\B_{\Tor^d}(y,r)};X),C(\Tor^d;X))}\leq C$ for some $C$ independent of $y,r$.
\end{enumerate}
\end{lemma}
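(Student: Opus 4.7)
The plan is to construct $\e_{y,r}^{\Tor^d}$ by combining three standard ingredients in a way that keeps the $C$-bound independent of $r$: (i) subtract a suitably defined \emph{constant part} of $f$, (ii) extend the remainder by a fixed Euclidean extension operator after a rescaling, and (iii) multiply by an $r$-adapted cutoff before periodizing. Since $r<1/8$, the ball $\B_{\Tor^d}(y,r)$ is isometric under translation to the Euclidean ball $\B(0,r)\subset \R^d$, and any function supported in $\B(0,2r)$ can unambiguously be viewed as an element of $C^\infty(\Tor^d)$; this lets us WLOG reduce to $y=0$, and uniformity in $y$ then follows from translation invariance on $\Tor^d$.

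Concretely, I would fix once and for all a bounded linear extension operator $E:H^{\alpha,\zeta}(\B(0,1);X)\to H^{\alpha,\zeta}(\R^d;X)$ which also satisfies $\|E h\|_{L^\infty(\R^d;X)}\lesssim \|h\|_{L^\infty(\B(0,1);X)}$ (the usual Stein/reflection-type extension works; note $X$ Hilbert and $\alpha\in(0,N]$ with $\alpha>d/\zeta$ ensures everything makes sense and $H^{\alpha,\zeta}\embed C$). I would also fix a cutoff $\chi_1\in C^\infty_c(\R^d)$ with $\chi_1\equiv 1$ on $\B(0,1)$, $\supp\chi_1\subset \B(0,2)$, and $0\le\chi_1\le 1$, and set $\chi_r(\cdot):=\chi_1(\cdot/r)$. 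Given $f\in H^{\alpha,\zeta}(\B(0,r);X)$, let
\begin{equation*}
c(f) := \frac{1}{|\B(0,r)|}\int_{\B(0,r)} f(x)\,dx \in X,\qquad h_f(x):=(f-c(f))(rx),\ x\in\B(0,1),
\end{equation*}
and define
\begin{equation*}
\e_{0,r}^{\Tor^d} f\ :=\ c(f)\ +\ \chi_r(\cdot)\,(E h_f)(\cdot/r),
\end{equation*}
interpreted on $\Tor^d$ by periodic extension (the second term is compactly supported in $\B(0,2r)\subset \B_{\Tor^d}(0,1/4)$). The operator is manifestly linear, and transported to $y\in\Tor^d$ by translation.

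The three properties are then checked as follows. For \eqref{it:extension_constant}: on $\B(0,r)$ we have $\chi_r\equiv 1$ and $(Eh_f)(x/r)=h_f(x/r)=f(x)-c(f)$, so $\e f|_{\B(0,r)}=f$; and for a constant $f\equiv c\in X$ we get $c(f)=c$, $h_f\equiv 0$, hence $\e c\equiv c$. For \eqref{it:extension_bounds_L_infty}: $|c(f)|\le\|f\|_{L^\infty(\B(0,r);X)}$, $\|h_f\|_{L^\infty(\B(0,1);X)}\le 2\|f\|_{L^\infty}$, $\|E\|_{L^\infty\to L^\infty}$ and $\|\chi_1\|_\infty$ are $r$-independent constants, giving a bound uniform in $r$ (and, by the translation reduction, in $y$). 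For \eqref{it:extension_bounds_C_alpha}: the rescaling $x\mapsto x/r$ introduces factors $r^{-j}$ in the derivatives of order $j\le \lceil\alpha\rceil$, and differentiating $\chi_r$ contributes factors $r^{-\alpha}$ via a Leibniz/paramultiplication estimate; since the operator $E$ and the norm of $\chi_1$ are fixed, one obtains a bound $\le C_r\|f\|_{H^{\alpha,\zeta}(\B(0,r);X)}$, with $C_r$ depending on $r$ but independent of $y$ by translation invariance.

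The main obstacle is \eqref{it:extension_bounds_L_infty} rather than \eqref{it:extension_bounds_C_alpha}: the natural Sobolev extension controls $H^{\alpha,\zeta}$-norms but a priori not $L^\infty$-norms, and one must choose $E$ (or replace it by a composition with a suitable cutoff and reflection) so that $\|E h\|_{L^\infty}\lesssim \|h\|_{L^\infty}$ with a constant independent of $h$. The trick of first subtracting $c(f)$ is essential here, as it turns the $L^\infty$ bound into a bound on a function which is small on average and may be multiplied by the compactly supported $\chi_r$ without any $r$-dependent blow-up. All remaining estimates reduce to standard scaling identities on $\R^d$ and the fact that $H^{\alpha,\zeta}(\Tor^d;X)$ behaves well under multiplication by smooth compactly supported functions, since $\alpha>d/\zeta$ places us above the multiplier threshold.
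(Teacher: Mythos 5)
The paper does not actually prove this lemma: it is imported verbatim from \cite[Lemma 5.8 and Remark 5.6]{AV21_SMR_torus}, so there is no in-paper argument to compare against. Your self-contained construction --- identify $\B_{\Tor^d}(y,r)$ with a Euclidean ball (legitimate since $r<1/8$), subtract the mean, rescale to the unit ball, apply a fixed Euclidean extension operator $E$, cut off with $\chi_r$, and periodize --- is the standard one and is correct; properties (1)--(3) follow as you indicate, and the $r$-dependence of $C_r$ in (2) coming from the scaling $x\mapsto x/r$ and from $\nabla\chi_r\sim r^{-1}$ is exactly what the statement permits. Two small points deserve tightening. First, you should justify that a single operator $E$ on the unit ball is simultaneously bounded on $H^{\alpha,\zeta}(\B(0,1);X)$ for the given (possibly fractional) $\alpha\in(0,N]$ and on $C(\overline{\B(0,1)};X)$: a Hestenes--Seeley higher-order reflection or Stein's extension does both, with the fractional Bessel-potential bound obtained by complex interpolation between $L^{\zeta}$ and $W^{N,\zeta}$, which is available here precisely because $X$ is a Hilbert space (more generally, UMD, as the remark following the lemma indicates). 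Second, your claim that subtracting $c(f)$ is \emph{essential} for the uniform-in-$r$ bound in (3) is not quite right: since $0\le\chi_r\le 1$ and $E$ is bounded on $L^{\infty}$, the estimate $\|\chi_r\,(Eh)(\cdot/r)\|_{L^{\infty}}\le\|Eh\|_{L^{\infty}}$ is already $r$-independent with or without the mean subtraction; what the subtraction actually buys is the second half of (1), namely $\e_{y,r}^{\Tor^d}c\equiv c$, and it is harmless for (2) and (3). With these clarifications the argument is complete.
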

The above lemma also holds if $X$ is a Banach space with the UMD property. The reader is referred to \cite[Chapter 4]{Analysis1} for details.

The key step in the proof of Theorem \ref{t:maximal_Stokes_Tord} is the following a priori estimate for solutions to \eqref{eq:NS_Tor_linearized} on small time intervals.

\begin{lemma}
\label{l:a_priori_estimates_NS}
Let Assumption \ref{ass:max_reg_parameters} be satisfied.
Then there exist $\tT,C>0$ depending only on the parameters $q,p,\a,d,\reg,\ellip,\eta,\xi,M$ such that for any $t\in [s,T)$, $\wtwo\in \{0,\a\}$, any stopping time $\tau:\O\to [t,t(t+\tT)\wedge T]$, any
\begin{equation}
\label{eq:fg_t_star}
f\in L^p_{\Progress}(( t,\tau)\times \O,w_{\wtwo}^t;\Hs^{-1+\s,q}), \ \
\ \ g\in L^p_{\Progress}(( t,\tau)\times \O, w_{\wtwo}^t;\Hs^{\s,q}(\ell^2))
\end{equation}
and any strong solution $u\in L^p_{\Progress}(( t,\tau)\times \O,w_{\wtwo}^t;\Hs^{1+\s,q})$ to \eqref{eq:NS_Tor_linearized} on $[ t,\tau]\times \O$ one has
\begin{equation*}
\|u\|_{L^p(( t,\tau)\times \O,w_{\wtwo}^t;\Hs^{1+\s,q})}
\leq
C \|f\|_{L^p(( t,\tau)\times \O,w_{\wtwo}^t;\Hs^{-1+\s,q})}+
C\|g\|_{L^p(( t,\tau)\times \O,w_{\wtwo}^t;\Hs^{\s,q}(\ell^2))}.
\end{equation*}
\end{lemma}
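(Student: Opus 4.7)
The plan is to reduce the variable--coefficient turbulent Stokes system to a constant--coefficient model problem via freezing of coefficients and a spatial partition of unity, and then absorb all error terms by choosing the time length $\tT$ and the localization radius sufficiently small. Since we already have stochastic maximal $L^p$-regularity available for constant--coefficient divergence--form systems on $\Tor^d$ from \cite{AV21_SMR_torus}, the only genuine novelty here is dealing with the nonlocal Helmholtz projection $\p$.

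\textbf{Step 1 (constant-coefficient model problem).} Fix $(t_0,\omega_0)$ and $y_0\in \Tor^d$ and freeze the coefficients, setting $\wh a^{i,j}:=a^{i,j}(t_0,\omega_0,y_0)$, $\wh b^j_n:=b^j_n(t_0,\omega_0,y_0)$, $\wh h^{i,j}_n:=h^{i,j}_n(t_0,\omega_0,y_0)$. Assumption \ref{ass:max_reg_parameters}\eqref{it:max_reg_measurability_a_btwod_Smr} together with the H\"older continuity \eqref{eq:Holder_continuity_phi_h} yields a uniform stochastic parabolicity constant. For such spatially constant coefficients $\p$ commutes with $\wh A$ and $\wh B_n$ (since the Fourier multiplier defining $\p$ commutes with constant-coefficient differential operators), so the system decouples into scalar equations in each Fourier mode. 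The corresponding stochastic maximal $L^p(w_\a)$-regularity on $\Tor^d$ with the full scale of weights is then provided by the results of \cite{AV21_SMR_torus}, giving the estimate in the $\Hs^{1+\delta,q}$-norm with a constant depending only on the structural parameters.

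\textbf{Step 2 (localization via partition of unity).} Choose a smooth partition of unity $\{\phi_k\}_{k=1}^K$ on $\Tor^d$ subordinate to balls $\B_{\Tor^d}(y_k,r)$ of radius $r\in(0,1/8)$ to be fixed later. For a strong solution $u$ on $\ll t,\tau\rr$, set $v_k=\phi_k u$. A direct computation gives
\begin{equation*}
dv_k + \p\wh\A_k v_k\,dt = \bigl(\phi_k f + F_k(u)\bigr)\,dt + \sum_{n\geq 1}\bigl(\p\wh\B_{k,n}v_k + \phi_k g_n + G_{k,n}(u)\bigr)\,dw^n_t,
\end{equation*}
where $\wh\A_k,\wh\B_{k,n}$ are the coefficients frozen at $(t,\omega_0,y_k)$, and the commutator remainders $F_k(u),G_{k,n}(u)$ contain: (i) terms with one less spatial derivative on $u$ multiplied by either $\nabla\phi_k$ or by $(a-\wh a)$, $(b-\wh b)$, $(h-\wh h)$; (ii) lower-order contributions coming from the commutator of $\phi_k$ with $\p$. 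For the latter, the key observation is that $\phi_k\p\xi-\p(\phi_k\xi)=\phi_k\nabla\q\xi-\nabla\q(\phi_k\xi)$, and via the auxiliary operator $\J_\phi$ of Lemma \ref{l:J_operator} (which gains two derivatives), the commutator $[\p,\phi_k]$ acts as a smoothing operator on $H^{s,q}$-scales. This is the place where \eqref{eq:NS_Tor_mapping_property_p} is used.

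\textbf{Step 3 (applying SMR and absorbing errors).} Apply Step 1 to each localized equation to get
\begin{equation*}
\|v_k\|_{L^p(\llo t,\tau\rro,w_\wtwo^t;\Hs^{1+\s,q})} \leq C\bigl(\|\phi_k f+F_k(u)\|_{L^p(\cdot;\Hs^{-1+\s,q})} + \|\phi_k g+G_{k,n}(u)\|_{L^p(\cdot;\Hs^{\s,q}(\ell^2))}\bigr).
\end{equation*}
Summing over $k$ (using $\sum_k\phi_k\equiv 1$ and finite overlap) reconstructs $u$ on the left. On the right, the pointwise multiplier estimates from \cite[Proposition 4.1]{AV21_SMR_torus} (and Corollary 4.2 there when $\delta>0$) together with the H\"older smallness $\|a-\wh a_k\|_{C^\alpha(\B(y_k,r))}\lesssim r^\alpha M$ (and similarly for $b,h$), combined with the gain of one derivative by $[\p,\phi_k]\circ\J$, yield that the perturbative part of $F_k(u),G_{k,n}(u)$ is bounded by $Cr^\alpha\|u\|_{L^p(\cdot;\Hs^{1+\s,q})}$ plus lower-order terms of the form $C_r\|u\|_{L^p(\cdot;\Hs^{\s,q})}$. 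The former is absorbed by choosing $r$ small.

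\textbf{Step 4 (absorbing lower-order terms using $\tT$).} The lower-order remainder $C_r\|u\|_{L^p(\llo t,\tau\rro,w_\wtwo^t;\Hs^{\s,q})}$ is handled by the standard trick: by interpolation and Young's inequality it is dominated by $\frac12\|u\|_{L^p(\cdot;\Hs^{1+\s,q})}+C_r'(\tau-t)^{\gamma}\|u\|_{L^p(\cdot;\Hs^{\s,q})}$ for some $\gamma>0$, or one uses that on intervals of length $\leq\tT$ one can write $u(r)=\int_t^r du$ and control the $\Hs^{\s,q}$-norm by the data plus a factor $\tT^{\gamma}$ times $\|u\|_{\Hs^{1+\s,q}}$. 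Choosing $\tT$ small enough yields the claimed estimate.

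\textbf{Main obstacle.} The main technical difficulty is controlling the commutator $[\p,\phi_k]$ in the appropriate negative-order Sobolev norms, uniformly in the localization center $y_k$; this is precisely the role of Lemma \ref{l:J_operator} and the representation $\p=I-\nabla\q$ combined with \eqref{eq:Delta_R_mapping}. Once this commutator estimate is in place, the remainder of the proof follows the template of \cite[Section 5]{AV21_SMR_torus}.
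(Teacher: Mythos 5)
Your proposal follows essentially the same strategy as the paper: establish stochastic maximal regularity for coefficients frozen at a point (where $\p$ commutes with constant-coefficient differential operators), perturb to obtain SMR for the extended/localized coefficients, localize via a partition of unity, control the nonlocal remainders using the smoothing operator $\J_\phi$ of Lemma~\ref{l:J_operator} and the mapping property \eqref{eq:NS_Tor_mapping_property_p}, and absorb the error terms by choosing the ball radius $r$ and the time horizon $\tT$ small. The overall architecture and the key technical lemma are identified correctly.

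Two points, however, need to be made precise. First, setting $v_k=\phi_k u$ and writing an SPDE of the form $dv_k + \p\wh\A_k v_k\,dt = \ldots$ is inconsistent: $\phi_k u$ is not divergence free, so it does not belong to $\Hs^{1+\s,q}$, and the SMR theory for the Helmholtz-projected Stokes operator cannot be applied to it directly. The paper's Step 4 instead decomposes $\phi_\lambda u = \nabla\J_\lambda u + \p(\phi_\lambda u)$ (this is where $\div u=0$ enters via \eqref{eq:PDE_defining_J}), and derives the localized equation for the divergence-free part $v_\lambda := \p(\phi_\lambda u)$ by applying $\p$ to the equation for $\phi_\lambda u$. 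This is what turns the interaction between $\p$ and multiplication by $\phi_\lambda$ into the concrete remainders $\Ff_\lambda,\G_\lambda$ involving $\J_\lambda$, $\q$ and $\nabla\phi_\lambda$, rather than an abstract commutator $[\p,\phi_k]$. Second, the $h$-term $\noise_b(t)v=\big(\sum_{n\ge1}[(I-\p)((\btwod_n\cdot\nabla)v)]\cdot h^{\cdot,k}_n\big)_k$ cannot be treated by freezing coefficients and localizing like $a$ and $b$: the $(I-\p)$ inside is nonlocal, and in fact for constant $\wh b_n$ and divergence-free $v$ the frozen $h$-term vanishes identically, so the ``H\"older smallness of $h-\wh h_k$'' you invoke gives nothing. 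The paper instead proves the a priori bound first for $h=0$ and then treats the entire $\p[\noise_b u]$ as a global perturbation that is one $\varepsilon$-derivative below the leading order, absorbing it by interpolation and the smallness of $\tT$. These are fixable imprecisions, but without them the argument as written does not quite close.
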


We first show how Lemma \ref{l:a_priori_estimates_NS} implies Theorem \ref{t:maximal_Stokes_Tord}.

\begin{proof}[Proof of Theorem \ref{t:maximal_Stokes_Tord}]
Below we employ the notation introduced in \cite[Definition 2.3]{AV21_SMR_torus} for the sets of couples of operators having the stochastic maximal $L^p$-regularity, i.e.\ $\mathcal{SMR}_{p,\a}(s,T)$ and $\mathcal{SMR}_{p,\a}^{\bullet}(s,T)$. Here we are using $X_j=\Hs^{-1+2j +\s,q}(\Tor^d)$. Also note that
$\Xap:=(X_0,X_1)_{1-\frac{1+\a}{p},p}=\Bs^{1+\s-2\frac{1+\a}{p}}_{q,p}$ and $X_{\theta}:=[X_0,X_1]_{\theta}=\Hs^{1+\s-2\theta,q}$ for $\theta\in (0,1)$.
Since $\Delta$ and $\p$ commute on $\Hs^{1+\s,q}$ (see \eqref{eq:def_helmholtz_projection_Fourier}), the Stokes operator $\Stok:=-\Delta: \Hs^{s+2,q}\subseteq \Hs^{s,q}\to \Hs^{s,q},$  is well-defined, and by the periodic version of \cite[Theorem 10.2.25]{Analysis2} $1+\Stok$ has a bounded $H^\infty$-calculus of angle zero. Therefore, by \cite[Theorem 7.16]{AV19} we have
\begin{equation}
\label{eq:Stokes_maximal_regularity}
\Stok\in \MRtas,\quad \text{ for all } 0\leq s<T<\infty
\end{equation}
with constant only depending on $q,\a,p,d$ and $T$.

The assertions of Theorem \ref{t:maximal_Stokes_Tord} follow if we can prove that $(A_{\Stok},B_{\Stok})\in \mathcal{SMR}_{p,\wtwo}^{\bullet}(t,t+\tT)$ (see \cite[Proposition 3.9]{AV19_QSEE_2}).
By \cite[Proposition 3.1]{AV21_SMR_torus}, it is enough to show that $(A_{\Stok},B_{\Stok})\in \mathcal{SMR}_{p,\wtwo}(t,t+\tT)$ for all $\wtwo\in \{0,\a\}$, $t\in [0,T-\tT]$ where $\tT$ is as in Lemma \ref{l:a_priori_estimates_NS}.
To prove the latter, we use the method of continuity (see \cite[Proposition 3.13]{AV19_QSEE_2}).

For any $\lambda\in [0,1]$ and $v\in \Hs^{1+\s,q}$, we set
\begin{align*}
{A}_{\Stok,\lambda}v&:=(1-\lambda)\Stok v+ \lambda A_{\Stok}v
\\ & =-\p\big(\div(a_{\lambda}\cdot \nabla v) \big) \\
&-\Big(\sum_{n\geq 1}\big\{ \div(\lambda\hp_{n} \big[(I-\p)[(b_n\cdot \nabla) v]\big]^k) 
+ \big[(I-\p)[(b_n\cdot\nabla)v ]\big]\cdot \lambda h^{\cdot,k}_n\big\}
\Big)_{k=1}^d,
\\
{B}_{\Stok,\lambda}v&:=\lambda B_{\Stok}v
= \big(\p(\lambda(\btwod_n\cdot \nabla) v)\big)_{n\geq 1}
\end{align*}
where $a^{i,j}_{\lambda}=\lambda\delta^{i,j} +(1-\lambda)a^{i,j}$ for all $i,j\in \{1,\dots,d\}$.

Note that $({A}_{\Stok,0},{B}_{\Stok,0})=(\Stok,0)$, and $({A}_{\Stok,\lambda},{B}_{\Stok,\lambda})$ satisfies the Assumption \ref{ass:max_reg_parameters} uniformly w.r.t.\ $\lambda\in [0,1]$. Thus Lemma \ref{l:a_priori_estimates_NS} ensures that the a priori estimate \eqref{l:a_priori_estimates_NS} holds for strong solutions to \eqref{eq:NS_Tor_linearized} on $[ t,\tau]\times \O$ with $C$ independent of $\lambda\in [0,1]$. The claim of this step follows from the method of continuity in \cite[Proposition 3.13]{AV19_QSEE_2} and \eqref{eq:Stokes_maximal_regularity}.
\end{proof}

\subsection{Proof of Lemma \ref{l:a_priori_estimates_NS}}
\label{ss:proof_lemma_small_time_interval}
We will next prove the key estimate. In principle we use the perturbation method of our recent work \cite[Lemma 5.4]{AV21_SMR_torus}, but due to the non-local behavior of the Helmholtz projection, there are several complications in the proof.
\begin{proof}[Proof of Lemma \ref{l:a_priori_estimates_NS} in case $h=0$]
The proof will be divided into several steps.
Moreover, for exposition convenience, we set $s=0$ and in Steps 1-6 we only consider the case $\s\leq 0$ and in Step 7 we discuss the modifications needed for the case $\s>0$.

We will use the method where we freeze the coefficients and use comparison on a small ball. For any $y\in\Tor^d$, $r\in (0,\frac{1}{8})$, $\B(y,r):=\B_{\Tor^d}(y,r)$, and for $v\in \Hs^{1+\reg,q}$, we set
\begin{equation}
\label{eq:differential_operator_frozen_extended}
\begin{aligned}
\A_{y}(t)v &:=-\div(a(t,y)\cdot \nabla v),
&
\A_{y,r}^{\e}(t)v&:=-\div( a^{\e}_{y,r} (t,\cdot)\cdot \nabla v),
\\
\b_{y,n}(t)v&:=(\btwod_n(t,y) \cdot \nabla ) v,
&  \b_{y,r,n}^{\e}(t)v&:=(\btwod^{\e}_{n,y,r}(t,\cdot)\cdot \nabla) v,\\
\b_{y}(t)v&:=(\b_{y,n}(t)v)_{n\geq 1},
&  \b_{y,r}^{\e}(t)v&:=(\b_{y,r,n}^{\e}(t) v)_{n\geq 1},
\end{aligned}
\end{equation}
where
$$
a^{\e}_{y,r}:=\Big(\e_{y,r}^{\Tor^d}(a^{i,j}(t,\cdot))\Big)_{i,j=1}^d, \quad
\btwod^{\e}_{n,y,r}:=\Big(\e_{y,r}^{\Tor^d}(\btwod^{j}_n(t,\cdot))\Big)_{j=1}^d, \ \ \  n\geq 1,
$$
and $\e_{y,r}^{\Tor^d}$ is the extension operator of Lemma \ref{l:extension_operators}. The operators $\A_{y}$ and $\b_{y}$ have a ``frozen coefficient at $y\in \Tor^d$" and $\A_{y,r}^{\e}$, $\b_{y,r}^{\e}$ are the operators whose coefficients are the extensions of $a^{i,j}|_{\B(y,r)} $, $\btwod^j_n|_{\B(y,r)}$.
Similarly, for $(y,r,v)$ as above, we set
\begin{equation}
\label{eq:differential_operator_frozen_extended_pp}
\pp_{y,r}^{\e} (t)v:= -\Big(\sum_{n\geq 1} \div( \hp_n (t,\cdot) \big(\nabla \q [ \b_{y,r,n}^{\e}(t) v])^k \big)\Big)_{k=1}^d,
\end{equation}
where $\q$ is as defined below \eqref{eq:Helmholtz_projection_equation_phi}.
Note that the coefficients $\hp_n$ in \eqref{eq:differential_operator_frozen_extended_pp} have \emph{not} been changed. Finally, there is no need to define the ``frozen'' operator $\pp_y (t)$ since
\begin{equation}
\label{eq:triviality_of_p_yn}
 \q  [\b_{y,n}(t)v]= (b_n(t,y)\cdot \nabla)[\q v]=0,
\ \ \text{
for all }v\in \Hs^{1+\s,q}, \ n\geq 1, \ t\in \R_+.
\end{equation}

Let $t\in [s,T)$, $\wtwo\in \{0,\a\}$ and let $\tau:\O\to [t,t^*]$ be a stopping time with $t^*:= T \wedge(t+\tT)$ where $\tT>0$ will be chosen in Step 6.
We use same notation as in \cite[Subsection 3.3]{AV19_QSEE_1} in case $(\p\A,\p\b):=(\p \A,(\p \b_n)_{n\geq 1})\in \mathcal{SMR}_{p,\wtwo}(t,T)$ with $X_j=\Hs^{-1+2j+\s,q}$ where $j\in\{0,1\}$. The solution operator $\Sol_{t,(\p\A,\p\b)}$ associated to the couple $(\p \A,\p \b)$ (see \eqref{eq:NS_differential_operators_concise_form}) maps
\begin{align}
\label{eq:mapping_solution_operators}
L^p_{\Progress}(( t,T)\times \O,w_{\wtwo}^{t};\Hs^{-1+\s,q})
&\times L^p_{\Progress}(( t,T)\times \O,w_{\wtwo}^{t};\Hs^{\s,q}(\ell^2))
\\
\nonumber
&\to L^p_{\Progress}(( t,T)\times \O ,w_{\wtwo}^t;\Hs^{1+\s,q}),
\end{align}
where 
$\Sol_{t,(\p\A,\p\b)}(f,g):=u$ 
and $u$ is the unique strong to \eqref{eq:NS_Tor_linearized} on $[t,\tau]\times \O$, and $(f,g)$ are as in \eqref{eq:fg_t_star}. In addition, we denote by $
K^{p,\wtwo}_{(\p\A,\p\b)}(t,T)$ its operator norm:
$$
K^{p,\wtwo}_{(\p\A,\p\b)}(t,T)=\|\Sol_{t,(\p\A,\p \b)}(\cdot,\cdot)\|
$$
on the spaces indicated in \eqref{eq:mapping_solution_operators}.
For $(f,g)$ as in \eqref{eq:fg_t_star}, and for $\tau$ as above, we set
\begin{equation}
\label{eq:N_f_g_stokes}
N_{f,g}(t,\tau):=\|f\|_{L^p(( t,\tau)\times \O,w_{\wtwo}^t;\Hs^{-1+\s,q})}
+\|g\|_{L^p(( t,\tau)\times \O,w_{\wtwo}^t;\Hs^{\s,q}(\ell^2))}.
\end{equation}
To abbreviate the dependencies let
$\Set =\{q,p,\a,d,\reg,\ellip,\eta,\xi,M\}$, and whenever we wish to emphasize such dependencies we write $C(\Set)$ instead of $C$.

\textit{Step 1: There exists $C_1(\Set)>0$ such that for each $y\in \Tor^d$, one has $(\p\A_{y},\p\b_{y})\in \mathcal{SMR}_{p,\wtwo}(t,T)$ and}
$
K^{p,\wtwo}_{(\p\A_y,\p\b_y)}(t,T)
\leq C_1.
$

Due to \eqref{eq:Helmholtz_projection_equation_phi}-\eqref{eq:def_Helmholtz_proposition}, one can check that
$$
\p\A_{y} v=\A_y \p v,\quad \p\b_{y} v=\b_{y}\p v, \ \ \text{ a.e.\ on }[ t,T]\times \O  \text{ for all } v\in  H^{1+\s,q}.
$$
Therefore the result is immediate from \cite[Lemma 5.4]{AV21_SMR_torus}.

\textit{Step 2: There exists $\smallpar(\Set)>0$ for which the following holds: }

\textit{If $y\in \Tor^d$ and $r\in (0,\frac{1}{8})$ are such that a.s.\ for all $t\in (0,T)$, $i,j\in \{1,\dots,d\}$, }
\begin{equation}
\label{eq:Step3_smallness_condition_eta}
\|a^{i,j}(t,\cdot)-a^{i,j}(t,y)\|_{L^{\infty}(\B(y,r))}
+\|(\btwod^j_n(t,\cdot)-\btwod^j_n (t,y))_{n\geq 1}\|_{L^{\infty}(\B(y,r);\ell^2)}
\leq \smallpar,
\end{equation}
\textit{then $(\p\A_{y,r}^{\e},\p\b_{y,r}^{\e})\in\mathcal{SMR}_{p,\wtwo}(t,T)$ and}
$
K^{p,\wtwo}_{(\p\A_{y,r}^{\e},\p\b_{y,r}^{\e})}(t,T)
\leq C_2(\Set).
$

To prove this we apply the perturbation result of \cite[Theorem 3.2]{AV21_SMR_torus}. To this end, we write
\begin{equation}
\label{eq:NS_Tor_perturbation_0}
\p\A_{y,r}^{\e}=\p \A_{y} +\p(\A_{y,r}^{\e}-\A_{y}),\qquad
\p\b_{y,r}^{\e}=\p \b_{y} +\p(\b_{y,r}^{\e}-\b_{y}).
\end{equation}
By \eqref{eq:boundedness_p_H_sq}, for each $v\in \Hs^{1+\s,q}$,
\begin{equation*}
\|\p(\A_{y,r}^{\e}-\A_{y}) v\|_{\Hs^{-1+\s,q}}
\lesssim \sum_{i,j=1}^d \Big\|(a^{i,j}(t,y)-\e_{y,r}^{\Tor^d}(a^{i,j}(t,\cdot))\partial_j v\Big\|_{H^{\s,q}}.
\end{equation*}
Next we estimate each term separately:
\begin{align*}
&\Big\|(a^{i,j}(t,y)-\e_{y,r}^{\Tor^d}(a^{i,j}(t,\cdot))\partial_j v\Big\|_{H^{\s,q}}\\
&\qquad \qquad
\stackrel{(i)}{=}\Big\|\e_{y,r}^{\Tor^d}\big(a^{i,j}(t,y)-a^{i,j}(t,\cdot)\big)\partial_j v\Big\|_{H^{\s,q}}\\
&\qquad \qquad
\stackrel{(ii)}{\lesssim}\Big(
\Big\|\e_{y,r}^{\Tor^d}\big(a^{i,j}(t,y)-a^{i,j}(t,\cdot)\big)\Big\|_{L^{\infty}}
\|\partial_j v\|_{H^{\s,q}}\\
&\qquad \qquad \qquad\qquad\qquad \quad
+\Big\|\e_{y,r}^{\Tor^d}\big(a^{i,j}(t,y)-a^{i,j}(t,\cdot)\big)\Big\|_{H^{\eta,\xi}}
\|\partial_j v\|_{H^{\s-\varepsilon,q}}\Big)\\
&\qquad \qquad
\stackrel{(iii)}{\leq }
\smallpar \|v\|_{H^{1+\s,q}}+C_r M\|v\|_{H^{1+\s-\varepsilon, q}}\\
&\qquad \qquad
 \stackrel{(iv)}{\leq } 2 \smallpar \|v\|_{H^{1+\s,q}}+C_{r,M,\smallpar}\|v\|_{H^{-1+\s, q}},
\end{align*}
where in $(i)$ we used Lemma \ref{l:extension_operators}\eqref{it:extension_constant}, in $(ii)$ \cite[Proposition 4.1(3)]{AV21_SMR_torus} for some $\varepsilon>0$ and in $(iii)$ Lemma \ref{l:extension_operators}\eqref{it:extension_bounds_C_alpha}-\eqref{it:extension_bounds_L_infty} and \eqref{eq:Step3_smallness_condition_eta}. Finally, in $(iv)$ we used a standard interpolation inequality.

For the $\b$-term, recall that $\xi\geq q'$ and $\eta>-\s$. Thus employing similar arguments, by Assumption \ref{ass:NS}\eqref{it:max_reg_regularity_a_btwod} and \cite[Proposition 4.1(3)]{AV21_SMR_torus} with $H=\ell^2$, one obtains
\begin{equation*}
\|\p(\b_{y,r}^{\e}-\b_{y}) v\|_{\Hs^{\s,q}(\ell^2)}\leq 2 \smallpar \|v\|_{H^{1+\s,q}}
+C_{M,r,\smallpar}\|v\|_{H^{-1+\s, q}}.
\end{equation*}
Similarly, by \eqref{eq:triviality_of_p_yn} and a variation of \cite[Proposition 4.1(3)]{AV21_SMR_torus},
\begin{align}
\label{eq:estimate_pp_new_term_localization}
\|\pp_{y,r}^{\e} v\|_{\Hs^{-1+\s,q}}
&\lesssim_{\s,q}\max_{j,k}
\Big\|\sum_{n\geq 1} \partial_j\Big( \hp_n^j  \big(\nabla \q [ (\b_{y,r,n}^{\e}-\b_y) v])^k \Big)
\Big\|_{H^{1+\s,q}}\\
\nonumber
&\lesssim \big\|\big((\b_{y,r}^{\e}-\b_y) v\big)_{n\geq 1}\big\|_{H^{\s,q}(\ell^2)}\\
\nonumber
&\leq
2 \smallpar \|v\|_{H^{1+\s,q}}
+C_{M,r,\smallpar}\|v\|_{H^{-1+\s, q}}.
\end{align}
The claim of Step 2 follows from Step 1, \cite[Theorem 3.2]{AV21_SMR_torus}, the above estimates and the arbitrariness of $\ell\in \{0,\a\}$.

\textit{Step 3: Let $\smallpar$ be as in step 2. There exist an integer $\Lambda\geq 1$, $(y_{\lambda})_{\lambda=1}^{\Lambda}\subseteq \Tor^d$, $(r_{\lambda})_{\lambda=1}^{\Lambda}\subseteq (0,\frac{1}{8})$, depending only on the quantities in $\Set$, such that $\Tor^d \subseteq \cup_{\lambda=1}^{\Lambda} \B_{\lambda}$, where $\B_{\lambda}:=\B_{\Tor^d}(y_{\lambda},r_{\lambda})$, and a.s.\ for all $t\in (0,T)$, $i,j\in \{1,\dots,d\}$,}
$$
\|a^{i,j}(t,y_{\lambda})-a^{i,j}(t,\cdot)\|_{L^{\infty}(\B_{\lambda})}
+
\|(\btwod^j_n(t,y_{\lambda})-\btwod^{j}_n(t,\cdot))_{n\geq 1}\|_{L^{\infty}(\B_{\lambda};\ell^2)}
\leq \smallpar.
$$
\textit{In particular, for all $\lambda\in \{1,\dots,\Lambda\}$},
\begin{align*}
\big(\p[\A_{\lambda}^{\e}+\pp^{\e}_{\lambda}],\p\b_{\lambda}^{\e}\big)\in \mathcal{SMR}_{p,\wtwo}(t,T), \ \  \text{ where}&\\ (\A_{\lambda}^{\e},\pp^{\e}_{\lambda},\b_{\lambda}^{\e}):=(\A^{\e}_{x_{\lambda},r_{\lambda}},\pp^{\e}_{x_{\lambda},r_{\lambda}},\b^{\e}_{x_{\lambda},r_{\lambda}}),&
\end{align*}
{\em with $K^{p,\wtwo}_{(\p\A_{\lambda}^{\e},\p\b_{\lambda}^{\e})}(t,T)\leq C_3(\Set)$.}

The last claim follows from the first one and Step 2. Let $\alpha:=\eta-\frac{d}{\xi}>0$ due Assumption \ref{ass:max_reg_parameters}\eqref{it:max_reg_measurability_SMR}. By Sobolev embeddings, $H^{\eta,\xi}(H)\embed C^{\alpha}(H)$ where $H\in \{\R;\ell^2\}$. We denote by $R_0$ the embedding constant. Thus for any  $y\in \Tor^d$,
\begin{align*}
&\|a^{i,j}(t,y)-a^{i,j}(t,\cdot)\|_{L^{\infty}(\B(y,r))}
+
\|\btwod^j(t,y)-\btwod^{j}(t,\cdot)\|_{L^{\infty}(\B(y,r);\ell^2)}\\
&\qquad
\leq \big( [a^{i,j}(t,\cdot)]_{C^{\alpha}(\B(y,r))}+[\btwod^j(t,\cdot)]_{C^{\alpha}(\B(y,r);\ell^2)}\big) r^{\alpha}
\leq \wt{C}M \,r^{\alpha}\leq \smallpar,
\end{align*}
where the last inequality follows by choosing $r:=\min \big\{\big(\frac{\smallpar}{R_0 M}\big)^{1/\alpha},\frac{1}{8}\big\}$. Since $\smallpar=\smallpar(\Set)$, it follows that  $r=r(\Set)$. To conclude, it remains to note that $\Tor^d$ can be covered by finitely many balls of the form $\B(y,r)$.

\textit{Step 4: Let $(\partition_{\lambda})_{\lambda=1}^{\Lambda}$ be a smooth partition of the unity subordinate to the covering $(\B_\lambda)_{\lambda=1}^{\Lambda}$ (see Step 3). Let $\Sol_{\lambda}:=\Sol_{t,(\p\A_{\lambda}^{\e},\p\b_{\lambda}^{\e})} $ be the solution operator associated to $(\p\A_{\lambda}^{\e},\p\b_{\lambda}^{\e})\in \mathcal{SMR}_{p,\wtwo}(t,T)$.
Recall that $f,g$ are as in \eqref{eq:fg_t_star}, $\tau$ is a stopping time with values in $[t,(t+\tT)\wedge T]$ and $u\in L^p_{\Progress}(( t,\tau)\times \O,w_{\wtwo}^t;\Hs^{1+\s,q})$ is a strong solution to \eqref{eq:NS_Tor_linearized} on $[ t,\tau]\times \O$. Then for any $\lambda\in \{1,\dots,\Lambda\}$ the following holds}
\begin{equation*}
\p(\partition_{\lambda} u)=\Sol_{\lambda}(0,\Ff_{\lambda} u,\G_{\lambda} u)+
\Sol_{\lambda}(0,\p f_{\lambda},\p g_{\lambda}),\quad \text{ a.e. on }[ t,\tau]\times \O,
\end{equation*}
\textit{where $\J_{\lambda}:=\J_{\partition_{\lambda}}$ (see Lemma \ref{l:J_operator}), $g_{\lambda}:=(g_{\lambda,n})_{n\geq 1}:=(g_n \partition_{\lambda})_{n\geq 1}$, $f_{\lambda}:=\partition_{\lambda} f$ and}
\begin{equation}
\begin{aligned}
\label{eq:max_reg_step5_definition_F_G}
\Ff_{\lambda} u&:=-\p[(\A_{\lambda}^{\e}+ \pp_{\lambda}^{\e})\nabla \J_{\lambda} u] -\p [\partition_{\lambda},\A]u
-\p [\partition_{\lambda},\pp ]u
\\&\qquad   -\p[(\nabla \partition_{\lambda})\q ([\A +\pp] u) ], \\
\G_{\lambda,n}u &:=\p\b_{\lambda,n}^{\e}\nabla\J_{\lambda} u +\p[\partition_{\lambda},\b_{n}]u+
\p[(\nabla \partition_{\lambda})\q( \b_{n} u)],\\
\G_{\lambda} u&:=(\G_{\lambda,n} u)_{n\geq 1},
\end{aligned}
\end{equation}
\textit{with $\q$ as in \eqref{eq:def_Helmholtz_proposition}, and $[\cdot,\cdot]$ denotes the commutator}.

To begin, set $u_{\lambda}:=\partition_{\lambda} u$. Note that $\p[(\A+\pp) u]=(\A +\pp) u -\nabla \q [(\A+\pp) u]$ and $\p[\b_{n} u]=\b_{n} u -\nabla \q[\b_{n} u]$. Using these identities and multiplying  \eqref{eq:NS_Tor_linearized} by $\partition_{\lambda}$, one obtains on $\Tor^d$
\begin{equation}
\label{eq:identity_u_h_lcaolized_equation}
\begin{aligned}
& \dd u_{\lambda} +(\A+\pp) \,u_{\lambda}\,\dd r\\
&=\Big([\A,\partition_{\lambda}]u +[\pp,\partition_{\lambda}]u + \nabla (\partition_{\lambda} \q [(\A+\pp) u])-(\nabla \partition_{\lambda})\q [(\A +\pp)u]+ f_{\lambda}\Big)\,\dd r\\
&\quad
+\sum_{n\geq 1}\Big(\b_{n} u_{\lambda} +[\partition_{\lambda},\b_{n}]u -\nabla (\partition_{\lambda} \Q[\b_n u])
+(\nabla \partition_{\lambda})\q [\b_{n} u]  +  g_{\lambda,n}\Big)\, \dd w_r^n.
\end{aligned}
\end{equation}
Since $\supp(u_{\lambda})\subseteq \B_{\lambda}$, for each $\lambda \in \{1,\dots,\Lambda\}$ and $n\geq 1$ one has (see \eqref{eq:differential_operator_frozen_extended}-\eqref{eq:differential_operator_frozen_extended_pp})
\begin{align*}
\A \,u_{\lambda}=\A^{\e}_{x_{\lambda},r_{\lambda}}u_\lambda&=\A_{\lambda}^{\e} u_{\lambda},\qquad
 \b_{n}u_{\lambda}=\b^{\e}_{n,x_{\lambda},r_{\lambda}} u_{\lambda}=\b_{\lambda,n}^{\e} u_{\lambda},\\
 &\pp u_{\lambda} = \pp^{\e}_{x_{\lambda},r_{\lambda}} u_{\lambda}  = \pp^{\e}_{\lambda} u_{\lambda} ,
\end{align*}
where we used the notation of Step 3. The Helmholtz decomposition gives
\begin{equation}
\label{eq:decomposition_u_lambda}
u_{\lambda}=\pi_{\lambda} u =\nabla \q ( \pi_{\lambda} u)+\p ( \pi_{\lambda} u)
\stackrel{(i)}{=}\nabla \J_{\lambda} u+\p u_{\lambda}.
\end{equation}
Here in $(i)$ we used $\div \,u=0$ in $\D'(\Tor^d)$, \eqref{eq:PDE_defining_J} and
$$
\div \,u_{\lambda}=\div\, u_{\lambda} -\lb 1, \div \,u_{\lambda} \rb
=\nabla \partition_{\lambda} \cdot u- \lb 1, \nabla \partition_{\lambda} \cdot u\rb.
$$
Thus $u_{\lambda}=\nabla \J_{\lambda} u+v_{\lambda}$ with $v_{\lambda}:=\p u_{\lambda}$. Applying the operator $\p$ to \eqref{eq:identity_u_h_lcaolized_equation}, we get
\begin{align*}
& \dd v_{\lambda} +\p[(\A_{\lambda}^{\e}+\pp_{\lambda}^{\e}) v_{\lambda}]\,\dd r\\
& =\Big(-\p[(\A_{\lambda}^{\e}+\pp_{\lambda}^{\e})\nabla \J_{\lambda} u] +\p [\A,\partition_{\lambda}]u +\p[\pp,\partition_{\lambda}]u  -\p\big[(\nabla \partition_{\lambda})\q [(\A+\pp) u]\big] +\p f_{\lambda}\Big)\, \dd r\\
&  + \sum_{n\geq 1}\Big(\p\b_{\lambda,n}^{\e} v_{\lambda}+\p\b_{\lambda,n}^{\e}\nabla\J_{\lambda} u+\p[\partition_{\lambda},\b_n]u +\p[(\nabla \partition_{\lambda})\q (\b_{n} u)]+  \p g_{\lambda,n}\Big)\, \dd w_r^n
\\ & =\big(\Ff_{\lambda} u + \p f_{\lambda}\big) \, \dd r + \sum_{n\geq 1}\Big(\p\b_{\lambda,n}^{\e} v_{\lambda} + \G_{\lambda,n} u+\p g_{\lambda,n}\Big)\, \dd w_r^n
\end{align*}
with initial value $v_{\lambda}(t)=0$. Recall that by Step 3, one has $(\p\A_{\lambda}^{\e},(\p\b_{\lambda,n}^{\e})_{n\geq 1})\in \mathcal{SMR}_{p,\wtwo}(t,T)$ for $\wtwo\in \{0,\a\}$. Thus the claim follows from \cite[Proposition 3.12]{AV19_QSEE_1} and the previous displayed formula.

\textit{Step 5: There exists $\varepsilon(\Set)\in (0,1)$, $C_5(\Set)>0$ such that for each $\lambda\in \{1, \ldots, \Lambda\}$, $v\in \Hs^{1+\s,q}$ and $t\in [0,T)$,}
$$
\|\Ff_{\lambda} v\|_{\Hs^{-1+\s,q}}+\|(\G_{\lambda,n} v)_{n\geq 1}\|_{\Hs^{\s,q}(\ell^2)}\leq C_5 \|v\|_{H^{1+\s-\varepsilon,q}},  \ \ \
\text{ a.e.\ on }[ t,T]\times \O.
$$
First consider $\Ff_{\lambda}$. Recall that $\Lambda=\Lambda(\Set)<\infty$  by Step 3. Thus it is enough to prove suitable estimates for $\Ff_{\lambda}$ where $\lambda\in \{1,\dots,\Lambda\}$ is fixed. Let us write $\Ff_{\lambda}:=\Ff_{\lambda,\J}+\Ff_{\lambda,\A}+\Ff_{\lambda,\q}+\Ff_{\lambda,\hp}$, where
\begin{align*}
\Ff_{\lambda,\J}v &:= -\p[(\A^{\e}_{\lambda}+\pp^{\e}_{\lambda})\nabla \J_{\lambda} v], &
\Ff_{\lambda,\A} v&:=\p\big([\A,\partition_{\lambda}]v\big),\\
\Ff_{\lambda,\q} v&:=-\p\big[(\nabla \partition_\lambda)\q[(\A+\pp) v]\big], &
\Ff_{\lambda,\hp}v&:= \p\big([\pp,\partition_{\lambda}]v\big).
\end{align*}
Then by Lemma \ref{l:extension_operators} and the pointwise multiplication result of \cite[Proposition 4.1(3)]{AV21_SMR_torus} (recall that  $\eta>-\s$ and $\xi\in [q',\infty)$ by Assumption \ref{ass:max_reg_parameters}\eqref{it:max_reg_regularity_a_btwod_SMR}),
\begin{equation*}
\begin{aligned}
\|\p \A^{\e}_{\lambda}(\nabla \J_{\lambda} v)\|_{\Hs^{-1+\s,q}}
&\lesssim_{\s,q}\max_{i,j} \| \e_{y,r}^{\Tor^d} (a^{i,j}(t,\cdot))\partial_j \nabla \J_\lambda v \|_{H^{\s,q}}\\
&\lesssim_{\s,q} M \max_j  \|\partial_j \nabla\J_\lambda v\|_{H^{\s,q}}
\lesssim_{\s,q} M\|v\|_{H^{\s,q}},
\end{aligned}
\end{equation*}
where in the last estimate we applied Lemma \ref{l:J_operator}.
The above argument can be also applied to estimate $\p \pp^{\e}_{\lambda}(\nabla \J_{\lambda} v)$. Hence
$$
\|\Ff_{\lambda,\J}v \|_{\Hs^{-1+\s,q}}\lesssim_{M,\s,q} \|v\|_{\Hs^{s,q}}, \ \text{ for }v\in \Hs^{1+\s,q}.$$ 
To estimate $\Ff_{\lambda,\A}$, note that
$$
[\partition_{\lambda},\A] v= \sum_{i,j=1}^d\Big[ \partial_i\big(a^{i,j} (\partial_j \partition_{\lambda} )v \big)
+ (\partial_j v)\, a ^{i,j} (\partial_i \partition_\lambda)\Big],\qquad \text{ in }\D'(\Tor^d).
$$
By \cite[Proposition 4.1(3)]{AV21_SMR_torus} we obtain
\begin{align*}
\|\p[\partial_i(a^{i,j} (\partial_j \partition_\lambda )v )]\|_{\Hs^{-1+\s,q}}
\lesssim_{\s,q} \|a^{i,j} (\partial_j \partition_\lambda )v\|_{H^{\s,q}}
\lesssim_{\s,q}  M  \| v\|_{H^{\s,q}}.
\end{align*}
Letting $\varepsilon\in (0,1)$ be such that $\eta\geq -\s+\varepsilon$, in a similar way we obtain
\begin{equation}
\label{eq:NS_Tor_estimate_F_2}
\begin{aligned}
\|\p(\partial_j v\, a^{i,j} \partial_i \partition_\lambda)\|_{\Hs^{-1+\s,q}}
&\lesssim_{\s,q}  \|a^{i,j} \partial_j v\|_{H^{-1+\s,q}}\\
& \lesssim_{\s,q} \| a^{i,j} \partial_j v\|_{H^{\s-\varepsilon,q}}\\
&\lesssim_{\s,q} M \|\partial_j v\|_{H^{\s-\varepsilon,q}}
\lesssim_{\s,q} M \|v\|_{H^{1+\s-\varepsilon,q}},
\end{aligned}
\end{equation}
from which we obtain the required bound for $\Ff_{\lambda,\A}$.
By \eqref{eq:NS_Tor_mapping_property_p}, $\Ff_{\lambda,\q}$ can be estimates as follows:
\begin{align*}
\|\Ff_{\lambda,\q} v\|_{\Hs^{-1+\s,q}}
\lesssim \|\q(\A v)\|_{H^{-1+\s,q}}
\lesssim \| a^{i,j} \partial_j v\|_{H^{-1+\s,q}}
\lesssim \|v\|_{H^{1+\s-\varepsilon,q}},
\end{align*}
where the last estimate can be proved similarly as in \eqref{eq:NS_Tor_estimate_F_2}.
Next we discuss $\Ff_{\lambda,\hp}$. For notational convenience, set $\Lc_n v:= \div( \hp_n\otimes v)=(\div(\hp_n v^k))_{k=1}^d$ for $v\in \Hs^{\s,q}$. Note that the multiplication $ \hp_n\otimes v$ is well-defined due to  \cite[Proposition 4.1(3)]{AV21_SMR_torus} and the regularity assumptions on $(\hp_n)_{n\geq 1}$ in Assumption \ref{ass:max_reg_parameters}\eqref{it:max_reg_regularity_a_btwod_SMR}. Hence, for all $v\in H^{1+\s,q}$ we have
$$
F_{\lambda,\hp} v=\p \Big[\sum_{n\geq 1}\big( [\Lc_n,\partition_{\lambda}] \nabla\q (\bb_n v)
+\Lc_n[\nabla\q,\partition_{\lambda}]  (\bb_n v)
+ \Lc_n [\bb_n,\partition_{\lambda}] v \big)\Big].
$$
Note that $[\Lc_n,\partition_{\lambda}]$ and $[\bb_n,\partition_{\lambda}] $ are zero-order differential operators and $[\nabla\q,\partition_{\lambda}]$ is a smoothing operator by Lemma \ref{l:commutator_localization}. Hence, reasoning as for $F_{\lambda,\A}$, we have $\|\Ff_{\lambda,\hp}v\|_{H^{-1+\s,q}}\lesssim \|v\|_{H^{1+\s-\varepsilon,q}}$ for some $\varepsilon>0$ depending only on $(d,\eta,\xi,\s)$.
Combining the previous estimates one obtains the claim for $\Ff_{\lambda}$.

By \eqref{eq:max_reg_step5_definition_F_G} we have $\G_{\lambda,n}:=\G_{\lambda,\q,n}+\G_{\lambda,\J,n}+\G_{\lambda,\b,n}$ where
\begin{equation*}
\G_{\lambda,\q,n} v:=
\p[(\nabla \partition_\lambda)\q( \b_{n} v)],
\quad
\G_{\lambda,\J,n}v :=\p\b_{n}\nabla\J_\lambda v,
\quad
\G_{\lambda,\b,n}v:=\p[(\partial_j \partition_\lambda) \btwod^j_n v]
\end{equation*}
where $v\in \Hs^{1+\s,q}$ and we used that $[\partition_\lambda,\b_{n}]v=(\partial_j \partition_{\lambda}) \btwod^j_n v$.
Now the estimate for $\G_{\lambda}$ can be proved in a similar way as we did for $\Ff_{\lambda}$.

\textit{Step 6: Conclusion.} Let $(u,f,g,\tau,t)$ be as at the beginning of the proof. By Step 3 we know that $(\p[\A_{\lambda}^{\e}+\pp^{\e}_{\lambda}],\p\b_{\lambda}^{\e})\in \MRtatz$ for all $\lambda\in \{1,\dots,\Lambda\}$. Combining the latter with Steps 4-5, one can see that there exist $\varepsilon(\Set)\in (0,1)$, $C_6(\Set)>0$ such that for all $\lambda\in \{1,\dots,\Lambda\}$,
\begin{align}
\label{eq:v_k_estimate_uniformly}
\|\p(\partition_{\lambda} u)\|_{L^p(( t,\tau)\times \O,w_{\wtwo}^t;\Hs^{1+\s,q})}
 \leq C_6 N_{\Ff_{\lambda} u,\G_{\lambda}u}(t,\tau)+C_6 N_{f,g}(t,\tau)&
 \\
\nonumber
 \leq C_5 C_6 \|u\|_{L^p(( t,\tau)\times \O,w_{\wtwo}^t;H^{1+\s-\varepsilon,q})}+C_6 N_{f,g}(t,\tau)&
\end{align}
where $N_{f,g}(t,\tau)$ is as in \eqref{eq:N_f_g_stokes}. As in \eqref{eq:decomposition_u_lambda}, since $\div\,u=0$ in $\D'(\Tor^d)$ a.e.\ on $[ t,\tau]\times \O$, one has
$
\partition_{\lambda} u=\p(\partition_{\lambda} u)+\nabla \J_{\lambda} u.
$
Since $(\partition_{\lambda})_{\lambda=1}^{\Lambda}$ is a partition of unity, we can write $u=\sum_{\lambda=1}^{\Lambda} u_{\lambda}$, and hence the previous considerations yield,
\begin{equation}
\label{eq:u_k_uniform_estimate_step_1}
\begin{aligned}
&\|u\|_{L^p(( t,\tau)\times \O,w_{\wtwo}^t;H^{1+\s,q})}\\
&{\leq }\sum_{\lambda=1}^{\Lambda} \Big(\|\p(\partition_{\lambda} u)\|_{L^p(( t,\tau)\times \O,w_{\wtwo}^t;H^{1+\s,q})}+
\|\nabla \J_{\lambda} u\|_{L^p(( t,\tau)\times \O,w_{\wtwo}^t;H^{1+\s,q})}\Big)\\
&\stackrel{(i)}{\lesssim} \Lambda C_6 N_{f,g}(t,\tau)+
\Lambda  C_5 C_6\| u\|_{L^p(( t,\tau)\times \O,w_{\wtwo}^t;H^{1+\s-\varepsilon,q})}
+ \Lambda C_7 \|u\|_{L^p(( t,\tau)\times \O,w_{\wtwo}^t;H^{\s,q})}\\
&\stackrel{(ii)}{\leq} \wt{C}_6 N_{f,g}(t,\tau)+\frac{1}{4} \|u\|_{L^p(( t,\tau)\times \O,w_{\wtwo}^t;H^{1+\s,q})}
+\wt{C}_6\|u\|_{L^p(( t,\tau)\times \O,w_{\wtwo}^t;H^{-1+\s,q})},
\end{aligned}
\end{equation}
where in $(i)$ we used \eqref{eq:v_k_estimate_uniformly} and Lemma \ref{l:J_operator} and in $(ii)$ a standard interpolation inequality. Since $u$ is a strong solution to \eqref{eq:NS_Tor_linearized} on $[ t,\tau]\times \O$, by \eqref{eq:estimate_AB_stok} and \cite[Lemma 3.14]{AV19_QSEE_1} there exists $(k_{s}(\Set))_{s>0}$ that $\lim_{s\downarrow 0} k_s=0$ and
\begin{equation}
\begin{aligned}
\label{eq:u_k_uniform_estimate_step_1_2}
\|u\|_{L^p(( t,\tau)\times \O,w_{\wtwo}^t;H^{-1+\s,q})}
&\leq k_{\tT}
\| u\|_{L^p(( t,\tau)\times \O,w_{\wtwo}^t;H^{1+\s,q})}+ k_{\tT}\,N_{f,g}(t,\tau)
\end{aligned}
\end{equation}
where we have used that $\tau-t\leq \tT$.

Combining \eqref{eq:u_k_uniform_estimate_step_1}-\eqref{eq:u_k_uniform_estimate_step_1_2}, one obtains
\begin{equation*}
\|u\|_{L^p(( t,\tau)\times \O,w_{\wtwo}^t;H^{1+\s,q})}\leq \wt{C}_6 N_{f,g}(t,\tau)+
\Big(\frac{1}{4}+k_{\tT} \wt{C}_6\Big) \|u\|_{L^p(( t,\tau)\times \O,w_{\wtwo}^t;H^{1+\s,q})},
\end{equation*}
By choosing $\tT(\Set)>0$ so that $k_{\tT}\leq 1/(4\wt{C}_6)$, the previous formula and \eqref{eq:spaces_divergence_free} imply the claimed a priori estimate of Lemma \ref{l:a_priori_estimates_NS}.

\emph{Step 7: The case $\s>0$}. To prove Lemma \ref{l:a_priori_estimates_NS} for $\s>0$, one can argue as in Steps 1-6. The only changes appear in Steps 2 and 5, where instead of   \cite[Proposition 4.1(3)]{AV21_SMR_torus}
we use \cite[Proposition 4.1(1)]{AV21_SMR_torus}.
\end{proof}

\begin{proof}[Proof of Lemma \ref{l:a_priori_estimates_NS} for general $h$]
Let us divide the proof of this step into three cases.
In each case we use the previously obtained estimate in the case $h=0$, and view the non-zero $h$ as a lower order perturbation.
Below, for all $v\in \Hs^{1+\s,q}$, we set
\begin{equation*}
\noise_{b}(t)v :=\Big(\sum_{n\geq 1} \Big[(I-\p)((\btwod_n(t)\cdot \nabla )v )\Big]\cdot h^{\cdot,k}_n\Big)_{k=1}^d.
\end{equation*}

\emph{Case $\s\leq 0$}.
Pick $\varepsilon\in (0,1)$ such that $\eta>-\s+\varepsilon$. Let $M$ be as in Assumption \ref{ass:max_reg_parameters}\eqref{it:max_reg_regularity_a_btwod_SMR}. Note that for each $v\in \Hs^{1+\s,q}(\Tor^d)$,
\begin{align*}
\|\p [\noise_{b} v]\|_{\Hs^{-1+\s,q}}
&\lesssim \max_{k} \Big\|\sum_{n\geq 1} \Big[(I-\p)[(b_n \cdot \nabla)v ]\Big]\cdot h^{\cdot,k}_n \Big\|_{\Hs^{\s-\varepsilon,q}}\\
&\stackrel{(i)}{\lesssim} \max_{j,k}\|(h^{j,k}_n)_{n\geq 1}\|_{H^{\eta,\xi}(\ell^2)}\|(b_n^j \partial_j v)_{n\geq 1} \|_{H^{\s-\varepsilon,q}}\\
&\stackrel{(ii)}{\lesssim} M \max_j  \|(b_n^j)_{n\geq 1}\|_{H^{\eta,\xi}(\ell^2)}\|\partial_j v\|_{H^{\s-\varepsilon,q}}
 \lesssim M^2 \|v\|_{H^{1+\s-\varepsilon,q}},
\end{align*}
where the implicit constants depend only on $s,q,d$. Moreover, in $(i)-(ii)$ we used that $\eta>-\s$, $\xi\geq q'$ and the pointwise multiplication result of \cite[Proposition 4.1(3)]{AV21_SMR_torus} and the text below it.

By Young's inequality and standard interpolation result, for all $\varepsilon>0$ we have
$$
 \|v\|_{H^{1+\s-\varepsilon,q}}\leq \varepsilon \|v\|_{H^{1+\s,q}}+ C_{\varepsilon} \|v\|_{H^{-1+\s,q}}
 \stackrel{\eqref{eq:spaces_divergence_free}}{=}\varepsilon \|v\|_{\Hs^{1+\s,q}}+ C_{\varepsilon} \|v\|_{\Hs^{-1+\s,q}}.
$$
Thus the claim of this step in case $\s\leq 0$ follows by combining the above estimate with the estimate in the case $h=0$. Indeed, since $u$ is a solution to \eqref{eq:NS_Tor_linearized}, setting
\[N_{f,g}:=\|f\|_{L^p(( t,\tau)\times \O,w_{\wtwo}^t;\Hs^{-1+\s,q})}+ \|g\|_{L^p(( t,\tau)\times \O,w_{\wtwo}^t;\Hs^{\s,q}(\ell^2))},\]
the estimate for $h=0$ and the above estimate for  $\p[\noise_b u]$ imply
\begin{align*}
\|u\|_{L^p(( t,\tau)\times \O,w_{\wtwo}^t;\Hs^{1+\s,q})}
& \leq C N_{f,g}+
C\|\p[\noise_b u]\|_{L^p(( t,\tau)\times \O,w_{\wtwo}^t;\Hs^{-1+\s,q})}
\\ & \leq
C N_{f,g} + C \varepsilon\|u\|_{L^p(( t,\tau)\times \O,w_{\wtwo}^t;\Hs^{1+\s,q})}\\
&\ + C_{\varepsilon}' \|u\|_{L^p(( t,\tau)\times \O,w_{\wtwo}^t;\Hs^{-1+\s,q})}.
\end{align*}
Choosing $\varepsilon=1/(2C)$, we get
\begin{align*}
\|u\|_{L^p(( t,\tau)\times \O,w_{\wtwo}^t;\Hs^{1+\s,q})}
 \leq
2C N_{f,g} + 2C_{\varepsilon}'\|u\|_{L^p(( t,\tau)\times \O,w_{\wtwo}^t;\Hs^{-1+\s,q})}.
\end{align*}
Reasoning as in Step 6 of Lemma \ref{l:a_priori_estimates_NS} one can also adsorb the remaining lower order term by using $\tau-t\leq T^*$ and choosing $T^*$ sufficiently small.

\emph{Case $\s\in (0,1]$}. In this case, for all $v\in \Hs^{1+\s,q}$,
\begin{align*}
\|\p [\noise_{b} v]\|_{\Hs^{-1+\s,q}}
&\lesssim
\| \noise_{b} v\|_{L^{q}}\\
&\lesssim \max_{i,j}\Big(\|(h_n^{i,j})_{n\geq 1}\|_{L^{\infty}(\ell^2)}\|(b_n^j)_{n\geq 1}\|_{L^{\infty}(\ell^2)}\Big)\|\nabla v\|_{L^{q}}
\stackrel{\eqref{eq:Holder_continuity_phi_h}}{\lesssim} \|v\|_{H^{1,q}}.
\end{align*}
Since $\s>0$, $\| v\|_{H^{1,q}}$ is lower order compared to $\|v\|_{H^{1+\s,q}}$. Thus the claim follows from standard interpolation estimates and Young's inequality as in the case $\s\leq 0$.

\emph{Case $\s> 1$}.
The proof goes as in the case $\s\leq 0$, where one should replace  \cite[Proposition 4.1(3)]{AV21_SMR_torus} by \cite[Corollary 4.2]{AV21_SMR_torus}.
\end{proof}

\section{Proofs of the main results}
\label{s:Navier_Stokes}

\subsection{Local well-posedness and the proof of Theorem \ref{t:NS_critical_local}}
\label{ss:NS_proofs}
Theorem \ref{t:NS_critical_local} will be derived from a more general local well-posedness which will be proved using the methods of \cite{AV19_QSEE_1,AV19_QSEE_2}. For $\delta\in (-1, 0]$ and $q\in [2, \infty)$ we define
\begin{equation}
\label{eq:ABFG_choice_NS}
\begin{aligned}
X_0&=\Hs^{-1+\s,q}, \qquad X_1 = \Hs^{1+\s,q} \qquad  A(\cdot)v=A_{\Stok} v, \qquad  B(\cdot)v =B_{\Stok} v,
\\ F(\cdot,v)&=\p\big[\Fd_0(\cdot,v)+\div(\Fd(\cdot,v))+f_{g,h}(\cdot,v) + f_{g,\hp}(\cdot,v) \big]-\p(\div(v\otimes v)),
\\
G(\cdot,v)&=\big(\p[\Gforce_n(\cdot,v )]\big)_{n\geq 1},
\end{aligned}
\end{equation}
for all $v\in X_1$.
Here $(A_{\Stok},B_{\Stok})$ is as in \eqref{eq:NS_differential_operators_concise_form} and \eqref{eq:generalizaed_Stokes_couple_divergence}, and
\begin{align*}
f_{g,h}(\cdot, v) &= \Big(\sum_{n\geq 1} \Big[(I-\p)\big[\Gforce_n(\cdot,v)\big]\Big]\cdot h^{\cdot,k}_n\Big)_{k=1}^d,\\
f_{g,\hp}(\cdot, v) &= \Big(
 \sum_{n\geq 1}\div \big(\hp_{n} \big[(I-\p)\Gforce_n(\cdot,u)\big]^k\big)
\Big)_{k=1}^d.
\end{align*}
Note that the linear part of $\wt{f}$ (see \eqref{eq:wtfbg}) is in the operator $A$, and the nonlinear part in $f_{g,h}$. In this way \eqref{eq:Navier_Stokes_generalized_with_projection} can be formulated as the semilinear stochastic evolution equation (see Definition \ref{def:sol_NS} for the definition of $(p,\a,q,\s)$-solutions):
\begin{equation}\label{eq:generalSEENS}
\left\{\begin{aligned}
\dd u + A u \,\dd t &= F(\cdot, u)\, \dd t + (B u +G(u)) \, \dd W_{\ell^2},\\
u(0)&=u_{0}.
\end{aligned}\right.
\end{equation}

Through this section we set
\begin{align*}
\Xap:=(X_0,X_1)_{1-\frac{1+\a}{p},p}=\Bs^{1+\s-2\frac{1+\a}{p}}_{q,p},\ \  \text{ and }\\
X_{\theta}:=[X_0,X_1]_{\theta}=\Hs^{1+\s-2\theta,q}, \ \ \text{ for }\theta\in (0,1),&
\end{align*}
where we used \eqref{eq:complex_real_interpolation}.
The following is our main result on local existence and regularization.
Below we set $1/0:=\infty$.

\begin{theorem}[Local well-posedness]
\label{t:NS_critical_localgeneralkappa}
Let Assumption \ref{ass:NS}$(p,\kappa,q,\delta)$ hold with
\begin{equation}
\label{eq:critical_equation_NS_lemma_hypothesis_H}
\delta\in (-1,0],  \ \ \
\frac{d}{2+\s}<q<\frac{d}{-\s} \  \ \ \text{ and }\ \  \
2\frac{1+\kappa}{p}+\frac{d}{q}\leq 2+\s.
\end{equation}
Then for every $u_0\in L^0_{\F_0}(\Omega;\Bs^{1+\delta-2\frac{1+\kappa}{p}}_{q,p}(\Tor^d))$, \eqref{eq:generalSEENS} has a $(p,\a,q,\s)$-solution $(u,\sigma)$ such that $\sigma>0$ a.s.\ and
\begin{equation}
\label{eq:NS_regularity_near_0generalkappa}
u\in L^p_{\rm loc}([0,\sigma),w_{\a};\Hs^{1+\s,q}(\Tor^d))\cap
C([0,\sigma);\Bs^{1+\delta-2\frac{1+\kappa}{p}}_{q,p}(\Tor^d)), \ \ \text{a.s.}
\end{equation}
and the trace space $\Bs^{1+\delta-2\frac{1+\kappa}{p}}_{q,p}(\Tor^d)$ is critical for \eqref{eq:generalSEENS} if and only if  the second part of \eqref{eq:critical_equation_NS_lemma_hypothesis_H} holds with equality. Moreover, $(u,\sigma)$ instantaneously regularizes in time and space in the following sense: a.s.
\begin{align}
\label{eq:H_regularization_NSgeneralkappa}
u&\in H^{\theta,r}_{\rm loc}(0,\sigma;\Hs^{1-2\theta,\zeta}(\Tor^d)) \ \
\text{ for all } \theta\in [0,1/2),\ r,\zeta\in (2,\infty),
\\ u& \in C^{\theta_1,\theta_2}_{\rm loc} ((0,\sigma)\times\Tor^d;\R^d) \ \  \text{ for all }  \theta_1\in [0,1/2),\ \theta_2\in (0,1).
\label{eq:C_regularization_NSgeneralkappa}
\end{align}
\end{theorem}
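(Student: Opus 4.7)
The plan is to cast \eqref{eq:Navier_Stokes_generalized_with_projection} as the abstract semilinear stochastic evolution equation \eqref{eq:generalSEENS} in the couple $(X_0,X_1)=(\Hs^{-1+\s,q},\Hs^{1+\s,q})$ and apply the critical local well-posedness theory of \cite{AV19_QSEE_1} together with the bootstrap regularization machinery of \cite{AV19_QSEE_2}. The linear part is handled directly by Theorem \ref{t:maximal_Stokes_Tord}: Assumption \ref{ass:NS}$(p,\a,q,\s)$ supplies exactly the hypotheses of Assumption \ref{ass:max_reg_parameters}, so $(A_{\Stok},B_{\Stok})\in\mathcal{SMR}^{\bullet}_{p,\a}(0,T)$ for every $T<\infty$, which yields both the maximal $L^p$-estimate with weight $w_\a$ and the continuous trace embedding into $\Xap=\Bs^{1+\s-2(1+\a)/p}_{q,p}$ needed to run the fixed-point argument.

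The core of the work is the verification of the critical semilinear estimates for $F$ and $G$ in \eqref{eq:ABFG_choice_NS}. The decisive term is the convection $\p\div(u\otimes u)$: combining the mapping properties \eqref{eq:boundedness_p_H_sq} with pointwise multiplication results \cite[Proposition 4.1 and Corollary 4.2]{AV21_SMR_torus} and the Sobolev embedding $\Hs^{1+\s-2\beta,q}\hookrightarrow L^{q'}$ available thanks to $q>d/(2+\s)$, one obtains a bilinear estimate of the form
\begin{equation*}
\|\p\div(v\otimes w)\|_{X_0}\lesssim \|v\|_{X_{\beta_1}}\|w\|_{X_{\beta_2}},
\end{equation*}
where $\beta_1,\beta_2\in[0,1)$ satisfy $\beta_1+\beta_2=1+\frac{1+\a}{p}$ precisely under the scaling constraint $2\frac{1+\a}{p}+\frac{d}{q}\leq 2+\s$. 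The lower-order nonlinearities $\p\Fd_0(\cdot,v)$, $\p\div\Fd(\cdot,v)$, $\p f_{g,h}(\cdot,v)$ and $G(\cdot,v)=(\p\Gforce_n(\cdot,v))_{n\geq1}$ satisfy analogous (and typically subcritical) bilinear estimates thanks to the quadratic Lipschitz condition in Assumption \ref{ass:NS}\eqref{it:NS_g_force_estimates} combined with Sobolev embeddings and \cite[Lemma 4.10 and 4.12]{AV19_QSEE_1}. This places the problem in the critical semilinear framework of \cite[Definition 5.1]{AV19_QSEE_1}, and the trace space $\Xap$ is critical iff the second inequality in \eqref{eq:critical_equation_NS_lemma_hypothesis_H} is an equality.

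With these two ingredients in place, the abstract local well-posedness result \cite[Theorem 4.8]{AV19_QSEE_1} in its critical formulation \cite[Theorem 5.3]{AV19_QSEE_1} yields a unique maximal $(p,\a,q,\s)$-solution $(u,\sigma)$ with $\sigma>0$ a.s.\ and satisfying \eqref{eq:NS_regularity_near_0generalkappa}. The uniqueness in the class of local solutions of Definition \ref{def:sol_NS} — and hence the existence of a \emph{maximal} $(p,\a,q,\s)$-solution in the sense of Definition \ref{def:sol_NS}(2) — follows from the general argument of \cite[Section 4]{AV19_QSEE_1}, which also gives the extra weighted regularity of Remark \ref{r:invariance} when $p>2$.

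The instantaneous regularization \eqref{eq:H_regularization_NSgeneralkappa}--\eqref{eq:C_regularization_NSgeneralkappa} is the step requiring the most care, and I anticipate it being the main obstacle. The strategy, following \cite[Sections 6.2--6.3]{AV19_QSEE_2}, is a bootstrap in $(p,\a,q,\s)$: for every $t_0\in(0,\sigma)$ the solution $u(t_0)$ lies, thanks to the continuous embedding of the trace space at positive times and the gain encoded in the weight, in a strictly better Besov space, so one may \emph{restart} \eqref{eq:generalSEENS} from $t_0$ in a new admissible tuple $(p',\a',q',\s')$ with larger $q'$, larger $p'$, and smaller $\a'$; applying Theorem \ref{t:maximal_Stokes_Tord} together with the critical estimates re-proved in the new setting and invoking uniqueness to identify the two solutions then promotes $u$ to $H^{\theta,r}_{\loc}(0,\sigma;\Hs^{1-2\theta,\zeta})$ for any $\theta\in[0,1/2)$, $r,\zeta\in(2,\infty)$. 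Here the restriction $\s>-1$ combined with the freedom $\eta>\max\{d/\xi,-\s\}$ in Assumption \ref{ass:NS}\eqref{it:max_reg_regularity_a_btwod} guarantees that the chain of admissible tuples actually reaches arbitrary $(r,\zeta)$. Finally, \eqref{eq:C_regularization_NSgeneralkappa} is a direct consequence of \eqref{eq:H_regularization_NSgeneralkappa} via the Sobolev--Morrey embedding $H^{\theta,r}(I;H^{1-2\theta,\zeta})\hookrightarrow C^{\theta-1/r}(I;C^{1-2\theta-d/\zeta})$ applied with $r,\zeta$ sufficiently large.
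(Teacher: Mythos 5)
Your proposal follows the paper's strategy step by step: cast \eqref{eq:Navier_Stokes_generalized_with_projection} as the abstract semilinear SEE \eqref{eq:generalSEENS} in the scale $(X_0,X_1)=(\Hs^{-1+\delta,q},\Hs^{1+\delta,q})$, supply stochastic maximal $L^p$-regularity via Theorem \ref{t:maximal_Stokes_Tord}, verify the critical structure conditions on $F,G$ (Lemma \ref{l:hyp_H_NS}), invoke the abstract local well-posedness theorem of \cite{AV19_QSEE_1}, and bootstrap the regularization \eqref{eq:H_regularization_NSgeneralkappa}--\eqref{eq:C_regularization_NSgeneralkappa} via \cite[Sections 6.2--6.3]{AV19_QSEE_2} and then conclude \eqref{eq:C_regularization_NSgeneralkappa} by Sobolev embedding. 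This is exactly the paper's Parts (A)--(C).

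Two details in your bookkeeping are wrong as stated, though neither derails the method. First, with the interpolation scale $X_\beta=[\Hs^{-1+\delta,q},\Hs^{1+\delta,q}]_\beta=\Hs^{-1+\delta+2\beta,q}$, the paper proves the bilinear estimate with the single exponent $\beta=\tfrac12\bigl(1-\tfrac{\delta}{2}+\tfrac{d}{2q}\bigr)$ (the same $\beta$ for all the quadratic terms $F_1,F_2,F_3,G$, not just the convection), and the criticality condition for a quadratic ($\rho=1$) nonlinearity is $\tfrac{1+\kappa}{p}\leq 2(1-\beta)$, i.e.\ $\beta_1+\beta_2\leq 2-\tfrac{1+\kappa}{p}$, which rearranges exactly to $2\tfrac{1+\kappa}{p}+\tfrac{d}{q}\leq 2+\delta$. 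Your relation $\beta_1+\beta_2=1+\tfrac{1+\kappa}{p}$ has the wrong sign in front of $\tfrac{1+\kappa}{p}$ and does not reproduce \eqref{eq:critical_equation_NS_lemma_hypothesis_H} except at the coincidence $\tfrac{1+\kappa}{p}=\tfrac12$. Second, the embedding $\Hs^{1+\delta-2\beta,q}\hookrightarrow L^{q'}$ that you cite is trivially weak on $\T^d$ and not the one needed; the proof uses $X_\beta=\Hs^{\frac{d}{2q}+\frac{\delta}{2},q}\hookrightarrow L^{2\lambda}$ with $\lambda$ fixed by $-\tfrac{d}{\lambda}=\delta-\tfrac{d}{q}$ (so $\lambda\in(q/2,q]$ under $q<\tfrac{d}{-\delta}$), allowing the chain $\|v\otimes w\|_{H^{\delta,q}}\lesssim\|v\otimes w\|_{L^\lambda}\leq\|v\|_{L^{2\lambda}}\|w\|_{L^{2\lambda}}$. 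With these corrections your sketch matches the paper's proof.
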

By Sobolev embedding it is straightforward to see that \eqref{eq:H_regularization_NSgeneralkappa} implies \eqref{eq:C_regularization_NSgeneralkappa}. The definition of criticality which we use is taken from \cite[Section 4.1]{AV19_QSEE_1} and some details can be found in the proof of Theorem \ref{t:NS_critical_localgeneralkappa}.

In order to prove the theorem, we first prove a lemma for the nonlinearities.
For the noise part we use the language of $\gamma$-radonifying operators for which we refer to \cite[Chapter 9]{Analysis2}.
The following identification (as sets and isomorphically) will be used several times below:
\begin{equation}
\label{eq:gamma_identification_H}
\gamma(\ell^2,H^{s,q}(\Tor^d))= H^{s,q}(\Tor^d;\ell^2)
\end{equation}
where $s\in \R$ and $q\in (1,\infty)$. The identity \eqref{eq:gamma_identification_H} follows from \cite[Proposition 9.3.1]{Analysis2} and the fact that $(I-\Delta)^{s/2}$ induces uniquely an isomorphism between $H^{s,q}(\Tor^d;\ell^2)$ and $L^q(\Tor^d;\ell^2)$.

By \eqref{eq:boundedness_p_H_sq} and the ideal property of $\gamma$-spaces \cite[Theorem 9.1.10]{Analysis2}, we also note that $\p$ extends to a bounded mapping from $\gamma(\ell^2,H^{s,q}(\Tor^d;\R^d))$ into $\gamma(\ell^2,\Hs^{s,q}(\Tor^d))$.

To prove the theorem we will first show that the nonlinearities $F$ and $G$ have the right mapping properties:
\begin{lemma}
\label{l:hyp_H_NS}
Let
Assumption \ref{ass:NS}$(p,\kappa,q,\delta)$ be satisfied and suppose that \eqref{eq:critical_equation_NS_lemma_hypothesis_H} holds.
Set $\beta=\frac{1}{2}(1-\frac{\s}{2}+\frac{d}{2q})$. Then there is a constant $C$ such that for all $v,v'\in \Hs^{\theta,q}$,
\begin{align*}
\|F(\cdot,v)-F(\cdot,v')\|_{X_{0}}& \leq C (1+\|\vone\|_{X_{\beta}}+\|\vtwo\|_{X_{\beta}})\|\vone-\vtwo\|_{X_{\beta}}
\\ \|F(\cdot,v)\|_{X_{0}}& \leq C (1+\|\vone\|_{X_{\beta}}^2)
\\ \|G(\cdot,\vone)-G(\cdot,\vtwo)\|_{\gamma(\ell^2,X_{1/2})}
&\leq  C(1+\|\vone\|_{X_{\beta}}+\|\vtwo\|_{X_{\beta}})\|\vone-\vtwo\|_{X_{\beta}}.
\\ \|G(\cdot,\vone)\|_{\gamma(\ell^2,X_{1/2})}
&\leq  C(1+\|\vone\|_{X_{\beta}}^2).
\end{align*}
\end{lemma}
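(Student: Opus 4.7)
The plan is to verify all four estimates by decomposing $F$ and $G$ along the additive structure in \eqref{eq:ABFG_choice_NS} and treating every summand with one common Sobolev chain. The driving observation, dictated by scaling, is that with $\beta=\frac{1}{2}(1-\delta/2+d/(2q))$ the interpolation space $X_{\beta}$ continuously embeds into $L^{a}(\Tor^{d};\R^{d})$ with $a=2qd/(d-q\delta)$, while the sharp dual Sobolev embedding gives $L^{a/2}(\Tor^{d}) \hookrightarrow H^{\delta,q}(\Tor^{d})$ since $\delta\leq 0$. Both embeddings are available under hypothesis \eqref{eq:critical_equation_NS_lemma_hypothesis_H}, which in particular forces the smoothness of $X_{\beta}$ to be non-negative (through $q(-\delta)<d$) and makes the primal and dual endpoints compatible.

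With this chain in hand I would first dispose of the convection term $-\p\div(v\otimes v)$: by \eqref{eq:boundedness_p_H_sq} and the trivial bound $\|\div w\|_{H^{-1+\delta,q}}\leq \|w\|_{H^{\delta,q}}$ it suffices to estimate $\|v\otimes v\|_{H^{\delta,q}}$, and H\"older together with the two embeddings yields $\|v\otimes v\|_{H^{\delta,q}} \lesssim \|v\|_{L^{a}}^{2} \lesssim \|v\|_{X_{\beta}}^{2}$. The polarisation $v\otimes v-v'\otimes v' = v\otimes(v-v')+(v-v')\otimes v'$ then gives the difference bound. The pointwise nonlinearities $\p f_{0}(v)$ and $\p\div(f(v))$ are treated by the same chain once the quadratic Lipschitz bound of Assumption \ref{ass:NS}\eqref{it:NS_g_force_estimates}, $|f_{j}(v)-f_{j}(v')| \lesssim (1+|v|+|v'|)|v-v'|$, is used to transfer pointwise estimates to $L^{a/2}$; the derivative in the $\div$-term is absorbed by the continuous inclusion $H^{\delta,q}\hookrightarrow H^{-1+\delta,q}$. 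The stochastic term $G(v)$ works identically after the identification $\gamma(\ell^{2},X_{1/2}) = H^{\delta,q}(\Tor^{d};\ell^{2})$ from \eqref{eq:gamma_identification_H} turns it into a scalar problem with $|\cdot|$ replaced by the $\ell^{2}$-norm.

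The last piece $f_{g,h}(v)=\sum_{n}(I-\p)[g_{n}(v)]\cdot h_{n}^{\cdot,k}$ carries an additional pointwise product with $h$. Here I would invoke the pointwise multiplication bound of \cite[Proposition 4.1]{AV21_SMR_torus}, which is applicable because Assumption \ref{ass:NS}\eqref{it:max_reg_regularity_a_btwod} delivers $\eta>\max\{d/\xi,-\delta\}$ and $\xi\geq 2\geq q'$, to obtain $\|f_{g,h}(v)\|_{H^{\delta,q}}\lesssim M\|g(v)\|_{H^{\delta,q}(\ell^{2})}$; the estimate already proved for $g(v)$ then closes the loop, and $H^{\delta,q}\hookrightarrow H^{-1+\delta,q}$ delivers the stated target space. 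The real burden in the argument is bookkeeping, namely verifying that every exponent relation used in the Sobolev chain is admissible over the full range of $(p,q,\delta,\kappa)$ in \eqref{eq:critical_equation_NS_lemma_hypothesis_H}, particularly at the critical endpoint $2(1+\kappa)/p+d/q=2+\delta$; no genuinely new analytic ingredient beyond standard Sobolev embeddings, the pointwise multiplier results of \cite[Section~4]{AV21_SMR_torus}, and the $\gamma$-space identification \eqref{eq:gamma_identification_H} is required.
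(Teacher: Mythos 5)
Your argument mirrors the paper's proof step for step: the same decomposition of $F$ along \eqref{eq:ABFG_choice_NS}, the same Sobolev chain $X_{\beta}\hookrightarrow L^{2\lambda}$ and $L^{\lambda}\hookrightarrow H^{\delta,q}$ with $\lambda=a/2=qd/(d-q\delta)$, polarization for the Lipschitz-type difference bounds, and the $\gamma$-space identification \eqref{eq:gamma_identification_H} for $G$. The only minor divergence is the $f_{g,h}$-term, where you invoke the $H^{\eta,\xi}(\ell^{2})$-multiplication estimate of \cite[Proposition 4.1]{AV21_SMR_torus}; the paper instead uses the simpler observation that $h\in L^{\infty}(\Tor^{d};\ell^{2})$ (see \eqref{eq:L_infty_a_b}) and estimates $f_{g,h}$ directly in $L^{\lambda}$ via Cauchy--Schwarz in $n$, so no Sobolev-multiplication machinery is needed for that piece.
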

Note that $\beta\in (0,1)$ by the conditions on the parameters in \eqref{eq:critical_equation_NS_lemma_hypothesis_H}.
\begin{proof}
Note that $X_{\beta} = \Hs^{\theta,q}$ with $\theta  = \frac{d}{2q}+\frac{\s}{2}$.
First we check the estimate for $F$. Let $F=F_1+F_2+F_3+F_4$ where
\begin{equation}
\begin{aligned}
 F_1(\cdot,v)&=\p[\div (v\otimes v)],
\\ \label{eq:defFi} F_2(\cdot,v)&=\p[\Fd_0(\cdot,v)+\div(\Fd(\cdot,v))],
\\ F_3(\cdot,v) &= \p[f_{g,h}(\cdot,v)],
\\  F_4(\cdot,v) &= \p[f_{g,\hp}(\cdot,v)].
\end{aligned}
\end{equation}
for all $v\in \Hs^{1+\s,q}$.
First we estimate $F_1$. For all $\vone,\vtwo\in X_1$,
\begin{equation}
\label{eq:NS_F_estimate_F_s_1_2}
\begin{aligned}
\|F_1(\cdot,\vone)-F_1(\cdot,\vtwo)\|_{\Hs^{-1+\s,q}}
&\lesssim \|(\vone\otimes \vone)-(\vtwo\otimes \vtwo)\|_{H^{\s,q}}\\
&\stackrel{(i)}{\lesssim} \|(\vone\otimes \vone)-(\vtwo\otimes \vtwo)\|_{L^{\lambda}}\\
&\lesssim \|\vone\otimes (\vone-\vtwo)\|_{L^{\lambda}}+ \|(\vone-\vtwo)\otimes \vone\|_{L^{\lambda}}\\
&\leq (\|\vone\|_{L^{2\lambda}}+\|\vtwo\|_{L^{2\lambda}})\|\vone-\vtwo\|_{L^{2\lambda}}\\
&\stackrel{(ii)}{\lesssim}(\|\vone\|_{\Hs^{\theta,q}}+\|\vtwo\|_{\Hs^{\theta,q}})\|\vone-\vtwo\|_{\Hs^{\theta,q}},
\end{aligned}
\end{equation}
where in $(i)$ and $(ii)$ we have used the Sobolev embedding with $-\frac{d}{\lambda}=\s-\frac{d}{q}$ and $\theta-\frac{d}{q}=-\frac{d}{2 \lambda}$. Therefore, we find $\theta=\frac{d}{2q}+\frac{\s}{2}$.
To ensure that both Sobolev embeddings hold note that $\lambda\in (q/2,q]$ and $\theta>0$ by \eqref{eq:critical_equation_NS_lemma_hypothesis_H}.
The estimate for $\|F_1(v)\|_{X_0}$ follows since $F_1(0) = 0$.

We prove a similar estimate for $F_2$. Indeed, by Assumption \ref{ass:NS}, for all $v,v'\in X_1$
\begin{equation}
\label{eq:NS_F_estimate_F_s_1_2_second_part}
\begin{aligned}
\|F_2(\cdot,v)-F_2(\cdot,v')\|_{\Hs^{-1+\s,q}}
&\lesssim  \sum_{j=0}^d
\|\Fd_j(\cdot,v)-\Fd_j(\cdot,v')\|_{L^{\lambda}}\\
&\lesssim \|(1+|v|+|v'|)|v-v'|\|_{L^{\lambda}}\\
&\lesssim (1+\|v\|_{L^{2\lambda}}+|v'\|_{L^{2\lambda}})\|v-v'\|_{L^{2\lambda}}\\
&\lesssim(1+\|\vone\|_{\Hs^{\theta,q}}+\|\vtwo\|_{\Hs^{\theta,q}})\|\vone-\vtwo\|_{\Hs^{\theta,q}},
\end{aligned}
\end{equation}
where $\lambda$ and $\theta$ are as before. The estimate for $\|F_2(\cdot,v)\|_{X_0}$  follows as well by using the boundedness of $\Fd_j(\cdot, 0)$.

Before treating $F_3$ and $F_4$, it is convenient to prove the required estimates for $G$. Recall that $X_{1/2}=\Hs^{\s,q}$ (see \eqref{eq:complex_real_interpolation}) and let $\beta,\theta$ be as above.
Then, Assumption \ref{ass:NS} and \eqref{eq:gamma_identification_H} yield that, for all $\vone,\vtwo\in X_1$,
\begin{align}
\label{eq:NS_G_estimate_G_s_1_2}
\|G(\cdot,\vone)-G(\cdot,\vtwo)\|_{\gamma(\ell^2,\Hs^{\s,q})}
&\lesssim \|\Gforce(\cdot,\vone)-\Gforce(\cdot,\vtwo)\|_{\gamma(\ell^2,L^{\lambda})}\\
\nonumber
&\lesssim (1+\|\vone\|_{L^{2\lambda}}+\|\vtwo\|_{L^{2\lambda}})\|\vone-\vtwo\|_{L^{2\lambda}}\\
\nonumber
&\lesssim (1+\|\vone\|_{\Hs^{\theta,q}}+\|\vtwo\|_{\Hs^{\theta,q}})\|\vone-\vtwo\|_{\Hs^{\theta,q}}.
\end{align}
Hence $G$ satisfies the required estimates as $X_{\beta}=\Hs^{\theta,q}$ by construction, and $G(\cdot,0)\in L^{\infty}(\T^d;\ell^2)$ by Assumption \ref{ass:NS}\eqref{it:NS_g_force_estimatesmeas}.

Next we consider $F_3$. Since $h^{j,k}\in L^\infty(\Tor^d;\ell^2)$ (see Assumption \ref{ass:NS} and the text below \eqref{eq:L_infty_a_b}), by Assumption \ref{ass:NS}\eqref{it:NS_g_force_estimatesmeas}, for all $v,v'\in X_1$,
\begin{align}
\label{eq:NS_F_estimate_F_s_third_part}
\|F_3(\cdot,v)-F_3(\cdot,v')\|_{\Hs^{-1+\s,q}}&\lesssim \|f_{g,h}(\cdot,v) - f_{g,h}(\cdot,v)\|_{L^{\lambda}}
\\ 
\nonumber
& \lesssim \max_{j,k}\|h^{j,k}\|_{L^\infty(\ell^2)}\|g(\cdot,v)-g(\cdot,v')\|_{L^{\lambda}(\ell^2)}
\\  
\nonumber
&\lesssim(1+\|\vone\|_{\Hs^{\theta,q}}+\|\vtwo\|_{\Hs^{\theta,q}})\|\vone-\vtwo\|_{\Hs^{\theta,q}}.
\end{align}
The estimate for $\|F_3(\cdot,v)\|_{X_0}$  follows as well by using the boundedness of $f_{g,h}(\cdot, 0)$.

Finally we consider $F_4$. Arguing as in \eqref{eq:estimate_pp_new_term_localization}, by Assumption \ref{ass:NS}\eqref{it:max_reg_regularity_a_btwod} and a variation of \cite[Proposition 4.1(3)]{AV21_SMR_torus}, for all $v,v'\in \Hs^{1+\s,q}$,
\begin{align*}
\|F_4(\cdot,v)-F_4(\cdot,v')\|_{X_0}
&\lesssim \max_{k} \Big\|
 \sum_{n\geq 1}\div \big(\hp_{n} \big[(I-\p)(\Gforce_n(\cdot,\vone)-\Gforce_n(\cdot,\vtwo))\big]^k\big)\Big\|_{H^{-1+\s,q}}\\
&\lesssim\|\Gforce(\cdot,\vone)-\Gforce(\cdot,\vtwo)\|_{H^{\s,q}(\T^d;\ell^2)}\\
&\lesssim(1+\|\vone\|_{\Hs^{\theta,q}}+\|\vtwo\|_{\Hs^{\theta,q}})\|\vone-\vtwo\|_{\Hs^{\theta,q}},
\end{align*}
where the last step follows as in \eqref{eq:NS_G_estimate_G_s_1_2} since
$\gamma(\ell^2,H^{\s,q})=H^{\s,q}(\T^d;\ell^2)$
by \eqref{eq:gamma_identification_H}. The estimate for $F_4(\cdot, v)$ follows since $F_4(\cdot, 0)\in L^\infty(\T^d;\R^d)$. 

Combining the above and the estimates \eqref{eq:NS_F_estimate_F_s_1_2}, \eqref{eq:NS_F_estimate_F_s_1_2_second_part} and \eqref{eq:NS_F_estimate_F_s_third_part}, we obtain the required estimate for $F$ by the definition of $\beta$.
\end{proof}

The proof of Theorem \ref{t:NS_critical_localgeneralkappa} is split into three parts. In Part (A) we prove local well-posedness. In Parts (B) and (C) we prove \eqref{eq:H_regularization_NSgeneralkappa} in the case $[p>2]$ and $[q=p=d=2,\s=0]$, respectively. The proof of \eqref{eq:C_regularization_NSgeneralkappa} follows as a simple consequence as already  mentioned before.

\begin{proof}[Proof of Theorem \ref{t:NS_critical_localgeneralkappa} Part (A) -- Local well-posedness]
The proof is divided in several steps.

{\em Step 1: the conditions (HF) and (HG) of \cite[Section 4.1]{AV19_QSEE_1}
hold with \eqref{eq:ABFG_choice_NS}, and the trace space $\Xap =\Bs^{1+\s-2\frac{1+\kappa}{p}}_{q,p}$  is critical for \eqref{eq:generalSEENS} if and only if
\begin{align}\label{eq:criticalcond}
2\frac{1+\kappa}{p}+\frac{d}{q} = 2+\s.
\end{align}
}
To prove this we use Lemma \ref{l:hyp_H_NS} and consider two cases. Recall $\beta=\frac{1}{2}(1-\frac{\s}{2}+\frac{d}{2q})$.
\begin{enumerate}[{\rm(1)}]
\item\label{it:F_1_NS_trace_regular} If $1-\frac{1+\kappa}{p}\geq \beta$,
one can check that \eqref{eq:criticalcond} does not hold by using $q>d/(2+\delta)$. We prove the desired mapping properties of $F$ and $G$ and non-criticality. Using that $X_{1-\frac{1+\kappa}{p}+\varepsilon}\hookrightarrow X_{\beta}$ for each $\varepsilon>0$, we get that Lemma \ref{l:hyp_H_NS} holds with $\beta$ replaced by $1-\frac{1+\kappa}{p}+\varepsilon$.
Letting $\rho_j=1,\beta_j=\varphi_j=1-\frac{1+\kappa}{p}+\varepsilon$ for $j\in \{1, 2\}$, where $\varepsilon>0$ is such that $\varepsilon<\frac{1+\kappa}{2p}$, one can check $\rho_1 \Big(\varphi_1-1+\frac{1+\kappa}{p}\Big) + \beta_1<1$, which means that $\Xap$ is not critical.
\item\label{it:F_1_NS_trace_not_strong}  If $1-\frac{1+\kappa}{p}< \beta$, then we set $\rho_j=1,\beta_j=\varphi_j=\beta$ and by Lemma \ref{l:hyp_H_NS} the condition in \cite[(4.2)]{AV19_QSEE_1} becomes
\begin{equation}
\label{eq:critical_condition_F_1_NS}
\frac{1+\kappa}{p}\leq \frac{\rho_j+1}{\rho_j}(1-\beta)=1-\frac{d}{2q}+\frac{\s}{2}.
\end{equation}
Note that \eqref{eq:critical_condition_F_1_NS} is equivalent to the second part of \eqref{eq:critical_equation_NS_lemma_hypothesis_H}.
Finally, the corresponding trace space $\Bs^{-1+\s-2\frac{1+\kappa}{p}}_{q,p}$ is critical for \eqref{eq:Navier_Stokes_generalized} if and only if the equality in \eqref{eq:critical_condition_F_1_NS} holds.
\end{enumerate}

{\em Step 2: Application of \cite[Theorem 4.8]{AV19_QSEE_1}.}
By Step 1 and Theorem \ref{t:maximal_Stokes_Tord} the conditions of the latter are satisfied. Therefore, there is a $(p,\a,q,\s)$-solution $(u,\sigma)$ and \eqref{eq:NS_regularity_near_0generalkappa} holds. The assertions on criticality of the trace space also follows from Step 1.
\end{proof}

Next we prove Part (B) of Theorem \ref{t:NS_critical_localgeneralkappa}, i.e.\ we show \eqref{eq:H_regularization_NSgeneralkappa}-\eqref{eq:C_regularization_NSgeneralkappa} in the case $p>2$. Here we exploit the results in \cite[Section 6]{AV19_QSEE_2} in the following way.
\begin{itemize}
\item Bootstrap regularity in time via \cite[Proposition  6.8]{AV19_QSEE_2} (Step 1a below) and \cite[Corollary 6.5]{AV19_QSEE_2} (Step 1b below).
\item Bootstrap high-order integrability in space. Here we apply \cite[Theorem  6.3]{AV19_QSEE_2} finitely many times in the $(\Hs^{-1,q_j},\Hs^{1,q_j},r,\alpha)$-setting with $(q_j)_{j\geq 1}$ such that $q_{j+1}-q_j\geq c>0$ (see Step 2).
\item Bootstrap regularity in space by applying \cite[Theorem  6.3]{AV19_QSEE_2} on the shifted scales $Y_i=\Hs^{-1+\s+2i}$ and $\wh{Y}_i=\Hs^{-1+2i}$ (see Step 3).
\end{itemize}

\begin{proof}[Proof of Theorem \ref{t:NS_critical_localgeneralkappa} Part (B) -- Proof of \eqref{eq:H_regularization_NSgeneralkappa} in the case $p>2$]
Let $(u,\sigma)$ be the $(p,\a,q,\s)$-solution to \eqref{eq:generalSEENS} provided by Part (A). In the proof below we use Theorem \ref{t:maximal_Stokes_Tord} without further reference to obtain the required stochastic maximal regularity in different settings.

\emph{Step 1: For all $r\in (2,\infty)$,}
\begin{equation}
\label{eq:step_1_regularization_NS}
u\in \bigcap_{\theta\in [0,1/2)} H^{\theta,r}_{\rm loc}(0,\sigma;\Hs^{1+\s-2\theta,q}), \text{ a.s.\ }
\end{equation}
To prove this regularization effect in time, in the case $\a=0$ we divide the argument into two sub-steps. In the latter case, we will first use
\cite[Proposition 6.8]{AV19_QSEE_2} to create a weighted setting with a slight increase in integrability. After that we will apply \cite[Corollary 6.5]{AV19_QSEE_2} to extend the integrability to arbitrary order. In the case $\a>0$, Step 1a below can be skipped.

\textit{Step 1a: If $\a=0$, then \eqref{eq:step_1_regularization_NS} holds for some $r>p$.}
Let $\alpha>0$ be such that
\begin{equation*}
Y_i=X_i=\Hs^{-1+2i-\s,q}, \ \  i\in \{0,1\}, \ \frac1p = \frac{1+\alpha}{r}, \ \ \text{ and } \ \ \frac{1}{r}=\beta_1-1+\frac{1}{p},
\end{equation*}
where $\beta_1\in (0,1)$ is as in Part (A) of the proof. Then $\alpha <\frac{r}{2}-1$. Now by Part (A) of the proof we can apply \cite[Proposition 6.8]{AV19_QSEE_2}  to obtain \eqref{eq:step_1_regularization_NS}.

\textit{Step 1b: \eqref{eq:step_1_regularization_NS} holds for all $r\in (2,\infty)$.}
Let either [$r=p$ and $\alpha=\a$, if $\a>0$] or [$r>p,\alpha>0$ be as in Step 1a, if $\a=0$]. Let $\wh{r}\in [r,\infty)$ be arbitrary and let $\wh{\alpha}\in [0,\frac{\wh{r}}{2}-1)$ be such that $\frac{1+\wh{\alpha}}{\wh{r}}<\frac{1+\alpha}{r}$. Step 1 of the above proof Part (A) and the above Step 1a show that \cite[Corollary 6.5]{AV19_QSEE_2} applies with $Y_i = \Hs^{-1+\s+2i,q}(\Tor^d)$, $\a$, $r,\alpha$  and $\wh{r},\wh{\alpha}$ as above.

\emph{Step 2: For all $r\in (2, \infty)$ and $\zeta\in (2,\frac{d}{-\delta})$,}
\begin{equation}
\label{eq:NS_regularity_step3}
u\in \bigcap_{\theta\in [0,1/2)} H^{\theta,r}_{\rm loc}(0,\sigma;\Hs^{1+\delta-2\theta,\zeta}), \text{ a.s.\ }
\end{equation}
From Step 2, we know that \eqref{eq:NS_regularity_step3} holds for all $r\in (2,\infty)$ and $\zeta=q$. Fix $\alpha>0$ and choose $r>p$ such that
$2\frac{1+\alpha}{r} + \frac{d}{q}<2+\delta$. By Step 1 of Part (A) we know that for all $\zeta\in [q,d/(-\delta))$, the assumptions (HF), (HG) in \cite[Section 4.1]{AV19_QSEE_1} are satisfied and non-criticality holds in the $(\Hs^{-1+\s,\zeta},\Hs^{1+\s,\zeta},r,\alpha)$-setting.
To prove the claim, it suffices to show the existence of $\varepsilon>0$ depending only on $q,\s$ such that for any $\zeta\in [q, d/(-\delta))$,
\begin{equation}
\label{eq:implication_Step_2_proof_regularization}
u\in \bigcap_{\theta\in [0,1/2)} H^{\theta,r}_{\rm loc}(0,\sigma;\Hs^{1+\delta-2\theta,\zeta})\text{ a.s. }
\Longrightarrow \
u\in \bigcap_{\theta\in [0,1/2)} H^{\theta,r}_{\rm loc}(0,\sigma;\Hs^{1+\delta-2\theta,\zeta +\varepsilon})\text{ a.s.}
\end{equation}
To prove \eqref{eq:implication_Step_2_proof_regularization},we apply \cite[Theorem 6.3]{AV19_QSEE_2}. Due to the previous choice of $r,\alpha$, Step 1 of Part (A), and the left-hand side of  \eqref{eq:implication_Step_2_proof_regularization}, \cite[Theorem 6.3]{AV19_QSEE_2} is applicable with
$$
Y_i=\Hs^{1+\delta-2i ,\zeta},  \ \  \wh{Y}_i =\Hs^{1+\delta-2i ,\zeta+ \varepsilon}, \ \  \wh{r}=r , \ \ \wh{\alpha}=\alpha
$$
where $\varepsilon>0$ is chosen so that
$$
Y^{\Tr}_{r}=\Bs^{1+\s-\frac{2}{r}}_{\zeta,r}
\embed
\Bs^{1+\s-2\frac{1+\alpha}{r}}_{\zeta+\varepsilon,r}=
\wh{Y}^{\Tr}_{\wh{\alpha},\wh{r}}.
$$
By Sobolev embeddings, the previous display holds provided $\frac{1}{\zeta+\varepsilon}-\frac{1}{\zeta}\leq 2\frac{\alpha}{d r}$. To satisfy the latter, it is enough to choose $\varepsilon:=2\frac{\alpha}{d r}$.

\emph{Step 3: For all $\zeta,r\in (2,\infty)$,}
\begin{equation}
\label{eq:step_2_regularization_NS}
u\in \bigcap_{\theta\in [0,1/2)} H^{\theta,r}_{\rm loc}(0,\sigma;\Hs^{1-2\theta,\zeta}), \text{ a.s.\ }
\end{equation}
If $\delta=0$, then \eqref{eq:step_2_regularization_NS} follows from \eqref{eq:NS_regularity_step3}. In case $\delta<0$ first fix $\zeta\in (d,d/(-\delta))$.
Let $r\ge p$ be so large that $r>\frac{2}{1+\s}$ and
\begin{align}\label{eq:choicerstep3}
\frac{2}{r} + \frac{d}{\zeta}<2+\delta.
\end{align}
This is possible since $\zeta>d$ and $\delta> -1$.
Set $\wh{\alpha}:= r(\frac{1}{r}-\frac{\s}{2})-1\in [0,\frac{r}{2}-1)$. Then by \eqref{eq:choicerstep3} and
Step 1 of Part (A), (HF), (HG), and non-criticality hold in the $(\Hs^{-1,q},\Hs^{1,q},r,\wh{\alpha})$-setting.
Now \cite[Theorem 6.3]{AV19_QSEE_2} with
$$
X_i=Y_i=\Hs^{-1+\s+2i,q}, \ \
\wh{Y}_i=\Hs^{-1+2i,q}, \ \
\wh{r}=r,\ \ \alpha=0, \ \ \wh{\alpha} \text{ as above. }
$$
gives \eqref{eq:step_2_regularization_NS} with the above $\zeta$ and $r$.
To check the conditions (1)-(3) in \cite[Theorem 6.3]{AV19_QSEE_2}, note that (1) and (2) follow from \eqref{eq:NS_regularity_step3}, Step 1 of Part (A) and the above choice of $Y_i,\wh{Y}_i,r,\wh{r},\alpha,\wh{\alpha}$. To check (3) note that $\frac{1+\wh{\alpha}}{r}=\frac{1}{r}-\frac{\s}{2}$. Thus
$$
Y_{r}^{\Tr}=\Bs^{1+\s-\frac{2}{r}}_{q,r}
= \Bs^{1-2\frac{1+\wh{\alpha}}{r}}_{q,r}=
\wh{Y}_{\wh{\alpha},\wh{r}}^{\Tr}.
$$
The second part of condition (3) holds due to \cite[Lemma 6.2(4)]{AV19_QSEE_2} since $\frac{1+\wh{\alpha}}{r}=\frac{1}{r}-\frac{\s}{2}$, $\wh{Y}_{1+\frac{\s}{2}}=\Hs^{1+\s,q}=Y_1$ and
$\wh{Y}_{0}=\Hs^{-1,q}=Y_{1+\frac{\s}{2}}$.

In order to obtain \eqref{eq:step_2_regularization_NS} for all $\zeta<\infty$, we can again apply \eqref{eq:implication_Step_2_proof_regularization} but this time with $\delta=0$.
\end{proof}

Next we prove Part (C) of Theorem \ref{t:NS_critical_localgeneralkappa}, i.e.\ we show \eqref{eq:H_regularization_NSgeneralkappa} in the case $q=p=2$ and $\a=0$. Here we follow the arguments in \cite[Section 7]{AV19_QSEE_2} employing the results in \cite[Section 6]{AV19_QSEE_2}, see \cite[Roadmap 7.4]{AV19_QSEE_2} for a strategy summary.
\begin{proof}[Proof of Theorem \ref{t:NS_critical_localgeneralkappa} Part (C) --  Proof of \eqref{eq:H_regularization_NSgeneralkappa} in the case $q=p=d=2$, $\s=0$]
Let $(u,\sigma)$ be the $(2,0,2,0)-$solution to \eqref{eq:generalSEENS} provided by Part (A). It is enough to prove that for all $\varepsilon\in (0,\frac{1}{2})$ for all $r\in (2,\infty)$,
\begin{equation}
\label{eq:step_regularization_NS_pq_2}
u\in \bigcap_{\theta\in [0,1/2)} H^{\theta,r}_{\loc} (0,\sigma;\Hs^{1-\varepsilon-2\theta}) \ \text{ a.s.\ }
\end{equation}
where, to shorthand the notation, we wrote $\Hs^{1-\varepsilon-2\theta}$ instead of $\Hs^{1-\varepsilon-2\theta,2}$. Indeed, afterwards we can apply Steps 2 and 3 of Part (B) with $\delta = -\varepsilon$ and $q=2$.

\textit{Step 1: \eqref{eq:step_regularization_NS_pq_2} holds for some $\varepsilon>0$ and $r=4$.}
Let $\varepsilon_0>0$ be such that the Assumptions \ref{ass:NS} holds for $\delta=-\varepsilon_0<0$, cf.\ Remark \ref{r:ass_NS}. Without loss of generality we assume $\varepsilon_0\leq \frac{1}{2}$.
For all $\varepsilon\in (0,\varepsilon_0)$, let $\alpha\in [ 0, 1)$ be such that
\begin{equation}
\begin{aligned}
\label{eq:frac_12_alpha_grather_than_0}
 \frac12 = \frac{1+\alpha}{4} + \frac{\varepsilon}{2}.
\end{aligned}
\end{equation}
Since $\varepsilon<\frac{1}{2}$, \eqref{eq:frac_12_alpha_grather_than_0} yields $\alpha\in (0,1)=(0,\frac{r}{2}-1)$ where $r=4$.   To conclude Step 1, it is enough  to check the assumptions of \cite[Proposition 6.8]{AV19_QSEE_2} with $p=2$, $
Y_i=\Hs^{-1+2i-\varepsilon} $ and $  X_i=\Hs^{-1+2i}$. However, these follow from Step 1 of Part (A), Theorem \ref{t:maximal_Stokes_Tord}, and the fact that $\varphi_j=\beta_j=\frac{1}{2}(1-\frac{d}{2q})=\frac{3}{4}$ for $j\in\{1,2\}$.

\textit{Step 2: Let $\varepsilon_0>0$ be as in Step 1.
For each $\varepsilon\in (0,\varepsilon_0\wedge \frac{1}{2})$, \eqref{eq:step_regularization_NS_pq_2} holds with arbitrary $r>2$.}
Let $r=4$ and $\alpha\in (0,\frac{r}{2}-1)$ be as in Step 1. Let $\wh{r}\in [4,\infty)$ be arbitrary and let $\wh{\alpha}\geq 0$ such that
\begin{align*}
Y_i =\Hs^{-1-\varepsilon+2i },
\
X_i =\Hs^{-1+2i},
\ \wh{r}\in (4,\infty), \ \frac{1+\wh{\alpha}}{\wh{r}}<\frac{1+\alpha}{r}.
\end{align*}
Then $\wh{\alpha}\in [0,\tfrac{\wh{r}}{2}-1)$. It remains to apply \cite[Corollary 6.5]{AV19_QSEE_2}. Note that the assumptions follow from Step 1 of Part (A), and
$$
\Yr= \Bs^{1-\varepsilon-\frac{2}{r}}_{2,r} \embed\Ls^2= X^{\mathsf{Tr}}_{0,2},
$$
where we used $1-\varepsilon>\frac{2}{r}$.
\end{proof}

\begin{proof}[Proof of Theorem \ref{t:NS_critical_local}]
It suffices to set $\a = \a_{\crit}$ in Theorem \ref{t:NS_critical_localgeneralkappa}. One can check that this is admissible under the conditions on $p, q, \delta$ in \eqref{eq:critical_equation_NS_lemma_hypothesis_H}.
\end{proof}
In Theorem \ref{t:NS_critical_local} we omitted $\delta\in (-1, -\frac{1}{2})$. The reason for this  is that it does not enlarge the class of critical spaces with smoothness $>-\frac12$ for the initial data for which we can treat \eqref{eq:Navier_Stokes_generalized}. Indeed, in case $\s\in (-1,-\frac{1}{2})$, then the restrictions $q<\frac{d}{-\s}$ and $2\frac{1+\kappa}{p}+\frac{d}{q}\leq 2+\s$ (see \eqref{eq:critical_equation_NS_lemma_hypothesis_H}) implies that the smoothness of the space of the initial data satisfies $1+\s-2\frac{1+\a}{p}> \frac{d}{q}-1 >-1-\s>-\frac{1}{2}$.

\subsection{Higher order regularity and the proof of Theorem \ref{t:NS_high_order_regularity}}

For the proof of Theorem \ref{t:NS_high_order_regularity}  we need the following lemma, which is the analogue of Lemma \ref{l:hyp_H_NS} but with $\delta$ replaced by a higher order smoothness $s>0$.
\begin{lemma}
\label{l:hyp_H_NS_high_order}
Suppose that Assumption \ref{ass:NS}$(p,\a,q,\delta)$ and Assumption \ref{ass:high_order_nonlinearities} holds.
Let $X_j=\Hs^{-1+2j+s,q}$ with $s\in (0,\eta]$. Assume that
\begin{equation}\label{eq:condparaNS}
q\in (d, \infty), \ p\in (2,\infty),\  \kappa\in \Big[0,\frac{p}{2}-1\Big)
\ \text{ satisfy }  \ 1+s-\frac{2(1+\kappa)}{p}-\frac{d}{q}>0.
\end{equation}
Then for all $n\geq 1$ there exists a constant $C_n$ such that for all $v,v'\in X_1$ satisfying $\|v\|_{\Xap},\|v'\|_{\Xap}\leq n$,
\begin{align*}
\|F(\cdot,v)\|_{X_{0}}&\leq C_n(1+\|\vone\|_{\Xap}), \\
\|F(\cdot,v)-F(\cdot,v')\|_{X_{0}} &\leq C_n\|\vone-\vtwo\|_{\Xap},
\\ \|G(\cdot,\vone)\|_{\gamma(\ell^2,X_{1/2})}
&\leq  C_n(1+\|\vone\|_{\Xap}), &  \\ \|G(\cdot,\vone)-G(\cdot,\vtwo)\|_{\gamma(\ell^2,X_{1/2})}
&\leq  C_n\|\vone-\vtwo\|_{\Xap},
\end{align*}
\end{lemma}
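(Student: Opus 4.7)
The plan is to exploit the embeddings of the trace space $\Xap = \Bs^{1+s-2(1+\kappa)/p}_{q,p}(\Tor^d)$: under \eqref{eq:condparaNS} we have $\Xap \hookrightarrow C^{\alpha}(\Tor^d;\R^d) \hookrightarrow L^\infty$ with $\alpha := 1+s-2(1+\kappa)/p-d/q > 0$, and since $2(1+\kappa)/p < 1$ also $\Xap \hookrightarrow \Hs^{s,q}(\Tor^d)$. Consequently the ball $\{\|v\|_{\Xap} \leq n\}$ is bounded in $L^\infty$ and in $\Hs^{s,q}$, and these are the only properties of $v,v'$ that will be used.

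Decomposing $F = F_1 + F_2 + F_3$ as in \eqref{eq:defFi}, I would handle the convection $F_1(v) = -\p\div(v\otimes v)$ via the identity $v\otimes v - v'\otimes v' = v\otimes(v-v') + (v-v')\otimes v'$ and the standard multiplicative estimate in $H^{s,q}$ for $s > 0$,
\[
\|\phi\psi\|_{H^{s,q}} \lesssim \|\phi\|_{L^\infty}\|\psi\|_{H^{s,q}} + \|\phi\|_{H^{s,q}}\|\psi\|_{L^\infty},
\]
which follows from \cite[Proposition 4.1(1)]{AV21_SMR_torus}. Combined with the embeddings above and the boundedness $\|\p\div(\cdot)\|_{H^{s,q} \to \Hs^{-1+s,q}} < \infty$, this yields $\|F_1(v)-F_1(v')\|_{X_0} \lesssim C_n\|v-v'\|_{\Xap}$ on the ball of radius $n$.

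For the composition pieces $F_2, F_3, G$ the plan is to combine Assumption \ref{ass:high_order_nonlinearities} with a Moser/Runst--Sickel composition estimate in $H^{s,q}$: since $f_0, f, g_n$ are $x$-independent and $C^{\lceil \eta+1\rceil}$ in $y$ with derivatives bounded on bounded sets, and since $s \leq \eta$, for any $\Phi \in \{f_0, f, g_n\}$ one has
\[
\|\Phi(v)-\Phi(v')\|_{H^{s,q}} \leq C_{N}\big(\|v-v'\|_{H^{s,q}} + (\|v\|_{H^{s,q}} + \|v'\|_{H^{s,q}})\|v-v'\|_{L^\infty}\big)
\]
whenever $\|v\|_{L^\infty} + \|v'\|_{L^\infty} \leq N$. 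Together with the first-paragraph embeddings this furnishes the $C_n$-Lipschitz bounds for $F_2$ and $G$. In $F_3$ the extra factor $h^{\cdot,k}_n \in H^{\eta,\xi}(\Tor^d;\ell^2)$ is absorbed by the pointwise-multiplier bound of \cite[Proposition 4.1(3)]{AV21_SMR_torus}, which applies since $\eta \geq s$ and $\xi \geq q'$ from Assumption \ref{ass:NS}. The $\|\Phi(v)\|_{X_0}$ sublinear bound follows similarly by taking $v' = 0$ and using the boundedness of $\Phi(t,0)$.

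The main technical point is the composition estimate in $H^{s,q}$ for non-integer $s \in (0,\eta]$: for $s \in (0,1)$ this reduces to a classical fractional chain rule (Taylor, Runst--Sickel), while for larger $s$ one uses a paraproduct decomposition and iterated differentiation of $\Phi(v)$, which consumes exactly $\lceil \eta+1\rceil$ derivatives of $\Phi$ in the $y$-variable; this is precisely what Assumption \ref{ass:high_order_nonlinearities} provides. The $x$-independence of $f_0, f, g_n$ assumed there is convenient in that it removes additional mixed-derivative bookkeeping that would otherwise be needed.
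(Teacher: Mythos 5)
Your proposal matches the paper's proof essentially step for step: the embeddings $\Xap\hookrightarrow C^\varepsilon\cap H^{s,q}$ (the paper's \eqref{eq:boundedness_high_order_regularity}), the Moser-type product bound from \cite[Proposition 4.1(1)]{AV21_SMR_torus} for the convection term $F_1$, and a Bourdaud--Moussai--Sickel/Taylor composition estimate for $F_2,F_3,G$. The only difference is presentational: the paper obtains the Lipschitz bound by explicitly factoring $\Fd(\cdot,v)-\Fd(\cdot,v')=\Phi(v,v')(v-v')$ with $\Phi(v,v')=\int_0^1\partial_y\Fd(\cdot,(1-t)v+t v')\,dt$ and then showing $\|\Phi(v,v')\|_{H^{s,q}}\leq C_n$ via the composition estimate, which is precisely the standard derivation of the Lipschitz composition inequality you quote as a black box.
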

Note that this lemma will also imply non-criticality of $F$ and $G$ since using the notation of \cite[Section 4.1]{AV19_QSEE_1} we can take $F = F_{\Tr}$ and $G = G_{\Tr}$ and hence the critical parts $F_c$ and $G_c$ vanish.

\begin{proof}
Note that $X_{\beta} = \Hs^{s+2\beta,q}$, $\Xap = \Bs^{1+s-2(1+\a)/p}_{q,p}$.
Due to the last condition in \eqref{eq:condparaNS} and Sobolev embedding we have
\begin{equation}
\label{eq:boundedness_high_order_regularity}
\Xap\embed C^{\varepsilon}\cap H^{s,q},\qquad  \text{ for some }\varepsilon>0.
\end{equation}
We prove the claimed estimate for $F$. The estimate for $G$ follows similarly. Fix $n\geq 1$ and $v,v'\in X_1$ satisfying $\|v\|_{\Xap},\|v'\|_{\Xap}\leq n$.
As in \eqref{eq:defFi} we write $F=F_1+F_2+F_3$. Note that
\begin{align*}
\|F_1(\cdot,v)-F_1(\cdot,v')\|_{\Hs^{-1+s,q}}
&\lesssim_{s,q} \|(v-v')\otimes v\|_{H^{s,q}}+\|v'\otimes (v-v')\|_{H^{s,q}} \\
&\stackrel{(i)}{\lesssim}_{s,q} \|v-v'\|_{L^{\infty}}\|v\|_{H^{s,q}} + \|v\|_{H^{s,q}}\|v-v'\|_{L^{\infty}} \\ & \stackrel{\eqref{eq:boundedness_high_order_regularity}}{\lesssim_n} \|v-v'\|_{\Xap}
\end{align*}
where in $(i)$ we used \cite[Proposition 4.1(1)]{AV21_SMR_torus} and in $(ii)$ Sobolev embedding. The estimate for $F_1(v)$ follows as well since $F_1(0) = 0$.

Next we prove the estimates for $F_2$. We only consider the $\div(f(\cdot,v))$ part, since the other term is similar. Let us write
\begin{align*}
\Fd(\cdot,v)-\Fd(\cdot,v')
=\Phi(v,v')(v-v'),&\\
\text{ where }\ \ \mathrm{\Phi}(v,v'):=\int_{0}^1 \partial_y \Fd(\cdot,(1-t) v+ t v')\,\dd t.&
\end{align*}
By repeating the argument for $F_1$, it is enough to show that, for all $v,v'\in X_1$ satisfying $\|v\|_{\Xap},\|v'\|_{\Xap}\leq n$,
$$
\|\Phi(v,v')\|_{H^{s,q}}\leq C_n.
$$
The latter follows from Assumption \ref{ass:high_order_nonlinearities}, \cite[Theorem 1]{BouMouSic} or \cite[Proposition 10.2, Chapter 13]{TayPDE3} and \eqref{eq:boundedness_high_order_regularity}. The estimate for $F_2(\cdot,v)$ can be proved more directly. The estimates for $F_3,F_4$ and $G$ can be proved similarly.
\end{proof}

After this preparation we will now be able to prove the higher order regularity by an iteration argument based on \cite[Theorem 6.3]{AV19_QSEE_2}.
\begin{proof}[Proof of Theorem \ref{t:NS_high_order_regularity}]
As usual we only prove \eqref{eq:H_regularization_NS_improved}, since \eqref{eq:C_regularization_NS_improved} follows from \eqref{eq:H_regularization_NS_improved} and Sobolev embeddings.

Let $\eta,\xi$ be as in Assumption \ref{ass:NS}. Let $\eta_0\geq 0$ and $\xi_0\in (d,\infty)$ be given by
\begin{equation*}
\xi_0
=\left	\{\begin{aligned}
d+1 \ \ &\text{ if }\xi\vee q\leq d,\\
\xi\vee q \ \ & \text{ otherwise},
\end{aligned}
\right.
\quad \text{ and }\quad
\eta-\frac{d}{\xi}= \eta_0-\frac{d}{\xi_0}.
\end{equation*}
Note that $\eta_0\in (0,\eta]$ since $\xi_0\geq \xi$ and $\eta>\frac{d}{\xi}$.
We split the proof into two steps. In the following steps, without further mentioning it, we apply Theorem \ref{t:maximal_Stokes_Tord}  several times in order to check stochastic maximal regularity of $(A,B)=(A_{\Stok},B_{\Stok})$. This is needed in order to check the conditions of our bootstrapping result \cite[Theorem 6.3]{AV19_QSEE_2}.
In case $\xi_0=\xi$ we also have $\eta_0=\eta$ and Step 2 below is not needed.

\emph{Step 1: For all $r\in (2,\infty)$,}
\begin{equation*}
u\in \bigcap_{\theta\in [0,1/2)} H^{\theta,r}_{\loc}(0,\sigma;\Hs^{1+\eta_0,\xi_0})\quad \text{ a.s.}
\end{equation*}
To show this fix $N\in \N$ so large that $\eta_0<N$. Set $s_k= k\frac{\eta_0}{N} $ for $k\in \{0,\dots,N\}$. By induction and \eqref{eq:NS_regularity_near_0} in Theorem \ref{t:NS_critical_local}, it is enough to prove that for each $k\in \{1,\dots,N\}$,
\begin{equation}
\label{eq:implication_high_order_regularity}
u\in \bigcap_{\theta\in [0,1/2)} H^{\theta,r}_{\loc}(0,\sigma;\Hs^{1+s_{k-1}-2\theta,\xi_0})\text{ a.s.}
 \Rightarrow
u\in \bigcap_{\theta\in [0,1/2)} H^{\theta,r}_{\loc}(0,\sigma;\Hs^{1+s_k-2\theta,\xi_0})\text{ a.s.}
\end{equation}
To prove \eqref{eq:implication_high_order_regularity} we apply \cite[Theorem 6.3]{AV19_QSEE_2} with $X_j = \Hs^{-1+2j+\s,q}$, $Y_j=\Hs^{-1+s_{k-1},\xi_0}$, $\wh{Y}_j=\Hs^{-1+s_{k},\xi_0}$ for $j\in \{0,1\}$ and $\alpha=0$, $r=\wh{r}$, $\wh{\alpha}$ to be chosen below.
Note that $\wh{Y}_j \embed Y_j \embed X_j$ for all $j\in \{0,1\}$.  Since $\xi_0>d$ and $\eta_0<N$, one can find $r\geq p$ such that $1-\frac{d}{\xi_0}>\frac{2}{r}$ and $\frac{\eta_0}{N}<1-\frac{2}{r}$. Set $\wh{\alpha}=\frac{r}{2}\frac{\eta_0}{N} \in [0,\frac{r}{2}-1)$.
It remains to check the conditions (1)--(3) of \cite[Theorem 6.3]{AV19_QSEE_2}. Conditions (1) and (2) are satisfied by the induction hypothesis and Lemma \ref{l:hyp_H_NS_high_order} which is applicable since
$$
1+s_{k}-\frac{d}{\xi_0}-2\frac{1+\wh{\alpha}}{r}
=
1+s_{k-1}-\frac{d}{\xi_0}-\frac{2}{r}
>0, \ \ \text{ for all }k\in \{1,\dots,N\},
$$
where we used $s_{k-1}\geq 0$ and $1-\frac{d}{\xi_0}>\frac{2}{r}$.  To check condition (3) note that \cite[(6.1)]{AV19_QSEE_2} follows from \cite[Lemma 6.2(4)]{{AV19_QSEE_2}}, $\wh{Y}_{1-\frac{\eta_0}{2N}}= Y_1$, $Y_{\frac{\eta_0}{2N}}= \wh{Y}_0$, and the choice of the parameters. For (3) it remains to observe that
$$
Y^{\Tr}_{r}=\Bs^{1+s_{k-1}-\frac{2}{r}}_{\xi,r} = \Bs^{1+s_{k}-2\frac{1+\wh{\alpha}}{r}}_{\xi,r}
=
\wh{Y}^{\Tr}_{\wh{\alpha},r}.
$$
Therefore, we can conclude \eqref{eq:implication_high_order_regularity}.

\emph{Step 2: Proof of \eqref{eq:H_regularization_NS_improved}}.
Fix $L\in \N$ such that $\eta-\eta_0<L$. Let $\eta_k=\eta_0+ k\frac{\eta-\eta_0}{L}$ where $k\in \{1,\dots,L\}$. Define $\xi_0\geq \xi_1\geq\ldots \xi_{L} = \xi$ by
\begin{equation}
\label{eq:s_k_choice}
\eta_k -\frac{d}{\xi_k}=\eta-\frac{d}{\xi},
\end{equation}
By induction and Step 1 it is enough to prove that for each $k\in\{1,\dots,L\}$,
\begin{equation}
\label{eq:implication_high_order_regularity_step_2}
u\in \bigcap_{\theta\in [0,1/2)} H^{\theta,r}_{\loc}(0,\sigma;\Hs^{1+\eta_{k-1}-2\theta,\xi_{k-1}})\text{ a.s.}
 \Rightarrow
u\in \bigcap_{\theta\in [0,1/2)} H^{\theta,r}_{\loc}(0,\sigma;\Hs^{1+\eta_k-2\theta,\xi_k})\text{ a.s.}
\end{equation}
Choose $r$ so large that $\frac{\eta-\eta_0}{L}<1-\frac{2}{r}$ (this is possible since $\eta-\eta_0<L$). Set $\wh{\alpha}:=\frac{\eta-\eta_0}{L} \frac{r}{2}\in [0,\frac{r}{2}-1)$.
As in the previous step, \eqref{eq:implication_high_order_regularity_step_2} follows from \cite[Theorem 6.3]{AV19_QSEE_2} applied with $X_j=\Hs^{-1+2j +\s,q }$, $Y_j =\Hs^{-1+2j+\eta_{k-1},\xi_{k-1}}$, $\wh{Y}_j= \Hs^{-1+2j+\eta_{k},\xi_{k}}$ and $r=\wh{r},\alpha=\wh{\alpha}$ as above.
Note that by \eqref{eq:s_k_choice} and Sobolev embeddings, $\wh{Y}_j\embed Y_j\embed X_j$ for $j\in \{0,1\}$. It remains to check the conditions (1)--(3) of \cite[Theorem 6.3]{AV19_QSEE_2}. Conditions (1)-(2) follows from Lemma \ref{l:hyp_H_NS_high_order}, which is applicable since
$$
1+\eta_k-2\frac{1+\wh{\alpha}}{r}-\frac{d}{\xi_k}\geq \eta_k-\frac{d}{\xi_k}\stackrel{\eqref{eq:s_k_choice}}{>}0,
$$
where we used $\frac{1+\wh{\alpha}}{r}<\frac{1}{2}$.
To check condition (3) note that \cite[(6.1)]{AV19_QSEE_2} follows from \cite[Lemma 6.2(1)]{AV19_QSEE_2}, and the fact that
\[Y^{\Tr}_r =\Bs^{1+\eta_{k-1}-\frac{2}{r}}_{\xi_{k-1},r}\embed \Bs^{1+\eta_{k-1}-\frac{2}{r}}_{\xi_{k},r}\embed \Bs^{1+\eta_{k}-2\frac{1+\wh{\alpha}}{r}}_{\xi_k,r} =\wh{Y}^{\Tr}_{\wh{\alpha},r},\]
where we used the definition of $\wh{\alpha}$
and the fact that $\xi_{k-1}\geq \xi_k$.
\end{proof}

\subsection{Blow-up criteria and proof of Theorem \ref{t:NS_serrin}}
\label{ss:NS_proofs_blow_up}
The proof of Theorem \ref{t:NS_serrin} is based on the blow-up criteria given in \cite[Section 4]{AV19_QSEE_2} and the extrapolation result in \cite[Lemma 6.10]{AV19_QSEE_2}.
We begin by specializing the \cite[Lemma 6.10]{AV19_QSEE_2} to the present situation.
Let $(u,\sigma)$ be the $(p,\a_{\crit},q,\s)$-solution \eqref{eq:Navier_Stokes_generalized} provided by Theorem \ref{t:NS_critical_local}. Consider the following stochastic Navier-Stokes equations starting at $\varepsilon>0$:
\begin{equation}\label{eq:Navier_Stokes_generalized_epsilon}
\left\{\begin{aligned}
\dd v + A v \,\dd t &= F(\cdot, v)\, \dd t + (B v +G(v)) \, \dd W_{\ell^2},\\
v(\varepsilon) & =\one_{\{\sigma>\varepsilon\}} u(\varepsilon).
\end{aligned}\right.
\end{equation}
Definition \ref{def:sol_NS} extends naturally to the problem \eqref{eq:Navier_Stokes_generalized_epsilon}. By either \eqref{eq:H_regularization_NS} or \eqref{eq:C_regularization_NS},
\begin{equation}\label{eq:uvarepsilreg}
\one_{\{\sigma>\varepsilon\}}u(\varepsilon)\in L^0_{\F_{\varepsilon}}(\O;\Bs^{\frac{d}{q_0}-1}_{q_0,p_0}(\Tor^d))\text{ a.s.\ for all }p_0,q_0\in (2,\infty).
\end{equation}
Thus Theorem \ref{t:NS_critical_localgeneralkappa} gives the existence and uniqueness of a $(p_0,0,q_0,\delta_0)$-solution $(v,\tau)$ under conditions on the parameters.

\begin{lemma}[Extrapolating life-span]
\label{l:NS_extrapolation}
Let Assumption \ref{ass:NS}$(p,\kappa_{\crit},q,\s)$ hold with
\[-\frac12\leq \delta\leq 0, \ \ \frac{d}{2+\s}<q<\frac{d}{1+\s}, \   \
\frac{2}{p}+\frac{d}{q}\leq 2+\s,\  \ \a=\a_{\crit}=-1+\frac{p}{2}\big(2+\s-\frac{d}{q}\big).\]
Let Assumption \ref{ass:NS}$(p_0, \kappa_0, q_0, \delta_0)$ hold with
\[\delta_0\in (-1,0],  \ \ \ \frac{d}{2+\s_0}<q_0<\frac{d}{-\s_0} \  \ \ \text{ and }\ \  \
2\frac{1+\kappa_0}{p_0}+\frac{d}{q_0}\leq 2+\s_0.
\]
Let $(u,\sigma)$ be the $(p,\a_{\crit},q,\s)$-solution to \eqref{eq:Navier_Stokes_generalized}.
Fix $\varepsilon>0$ and let $(v,\tau)$ be the $(p_0,\a_0,q_0,\delta_0)$-solution to \eqref{eq:Navier_Stokes_generalized_epsilon}. Then
$$
\tau=\sigma \text{ a.s.\ on }\{\sigma>\varepsilon\} \quad \text{ and }\quad
u=v\text{ a.e.\ on }[ \varepsilon,\sigma)\times \O.
$$
\end{lemma}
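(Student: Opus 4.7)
The plan is to apply the abstract extrapolation result \cite[Lemma 6.10]{AV19_QSEE_2} in our concrete setting. Its hypotheses are supplied by: stochastic maximal regularity of the turbulent Stokes couple in both parameter settings (Theorem \ref{t:maximal_Stokes_Tord}), the mapping properties of $F$ and $G$ given by Lemma \ref{l:hyp_H_NS} in both settings, local well-posedness in both settings (Theorem \ref{t:NS_critical_localgeneralkappa}), and the fact that the initial datum at time $\varepsilon$ lies in the trace space of the target setting, which is exactly \eqref{eq:uvarepsilreg}. Instead of verifying the abstract hypotheses blindly, I would split the argument into the two inclusions.

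\emph{Proof of $\sigma\le\tau$ on $\{\sigma>\varepsilon\}$ and $u=v$ on $\ll\varepsilon,\sigma\rro$.} The instantaneous regularization \eqref{eq:H_regularization_NS} provides, on compact subintervals of $(0,\sigma)$, paths of $u$ in $H^{\theta,p_0}(\Hs^{1+\delta_0-2\theta,q_0})$ for every $\theta\in[0,1/2)$. Applying Lemma \ref{l:hyp_H_NS} with the parameters $(p_0,\a_0,q_0,\delta_0)$ then gives that $-\div(u\otimes u)+f_0(\cdot,u)+\div(f(\cdot,u))+\wt{f}(\cdot,u)$ and $(g_n(\cdot,u))_{n\ge 1}$ satisfy the integrability \eqref{eq:integrability_condition} in the $(p_0,\a_0,q_0,\delta_0)$-weights on any interval of the form $[\varepsilon,\sigma_n]$, with $\sigma_n\uparrow\sigma$ a localizing sequence for $u$. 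Hence $(u|_{[\varepsilon,\sigma)},\sigma)$ (considered on the event $\{\sigma>\varepsilon\}$) is a local $(p_0,\a_0,q_0,\delta_0)$-solution to \eqref{eq:Navier_Stokes_generalized_epsilon}, and the maximality built into Definition \ref{def:sol_NS}(2) of the $(p_0,\a_0,q_0,\delta_0)$-solution $(v,\tau)$ yields $\sigma\le\tau$ on $\{\sigma>\varepsilon\}$ and $u=v$ a.e.\ on $\ll\varepsilon,\sigma\rro$.

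\emph{Proof of $\tau\le\sigma$ on $\{\sigma>\varepsilon\}$.} Define the concatenation
\[
w:=u\,\one_{[0,\varepsilon]}+v\,\one_{(\varepsilon,\tau)} \quad\text{on }[0,\tau),
\]
which agrees with $u$ on $\ll 0,\varepsilon\rr$ and with $v$ on $\llo\varepsilon,\tau\rro$; note that $w(\varepsilon)=u(\varepsilon)$ is well defined by the first step. Applying the instantaneous regularization to $v$ (Theorem \ref{t:NS_critical_localgeneralkappa} in the $(p_0,\a_0,q_0,\delta_0)$-setting), we get arbitrarily high space-time integrability of $v$ on compact subintervals of $(\varepsilon,\tau)$, which by Sobolev embedding and Lemma \ref{l:hyp_H_NS} in the $(p,\a_{\crit},q,\delta)$-setting is more than sufficient for \eqref{eq:integrability_condition}. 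Combining with the integrability of $u$ on $[0,\varepsilon]$ afforded by Definition \ref{def:sol_NS} applied to $(u,\sigma)$, one checks that $(w,\tau)$ is a local $(p,\a_{\crit},q,\delta)$-solution to \eqref{eq:Navier_Stokes_generalized} with initial value $u_0$, on the event $\{\sigma>\varepsilon\}$. The maximality of the $(p,\a_{\crit},q,\delta)$-solution $(u,\sigma)$ then forces $\tau\le\sigma$ on $\{\sigma>\varepsilon\}$.

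The main obstacle lies in the second step, and more precisely in the passage through $t=\varepsilon$ when gluing: one needs $v$ to possess the integrability required for a $(p,\a_{\crit},q,\delta)$-solution on a right-neighborhood of $\varepsilon$, while $v$ was originally constructed only in the $(p_0,\a_0,q_0,\delta_0)$-framework and without the unweighted endpoint control at $\varepsilon$. Away from $\varepsilon$ the instantaneous regularization of $v$ takes care of everything, and at the common value $v(\varepsilon)=u(\varepsilon)$ one exploits \eqref{eq:H_regularization_NS} to ensure the trace lies in the correct Besov space. This matching is precisely the content that \cite[Lemma 6.10]{AV19_QSEE_2} encapsulates abstractly, so a clean presentation consists in verifying, once and for all, that our data satisfy the lemma's assumptions and then invoking it.
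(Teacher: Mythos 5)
Your ultimate plan (invoke \cite[Lemma 6.10]{AV19_QSEE_2}) is the same as the paper's, and your first inclusion $\sigma\le\tau$, $u=v$ on $\ll\varepsilon,\sigma\rro$ is essentially correct: \eqref{eq:H_regularization_NS} gives $u$ enough interior regularity on $(0,\sigma)$ to make it a local $(p_0,\a_0,q_0,\delta_0)$-solution of the shifted problem (the weight $w^\varepsilon_{\a_0}$ only helps near $\varepsilon$), and maximality of $(v,\tau)$ concludes. So that half is fine.

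The second inclusion is where the gap lies, and you correctly sense it but do not resolve it. The obstruction is that $v$ is only known to be a $(p_0,\a_0,q_0,\delta_0)$-solution, hence $v\in L^{p_0}(\varepsilon,\cdot,w^\varepsilon_{\a_0};\Hs^{1+\delta_0,q_0})$ near $\varepsilon$: the weight $w^{\varepsilon}_{\a_0}$ vanishes at $\varepsilon$, while the concatenation $w$ must land in the \emph{unweighted} space $L^p(\varepsilon,\cdot;\Hs^{1+\s,q})$ (the weight $w_{\a_{\crit}}=t^{\a_{\crit}}$ is comparable to a constant near $\varepsilon$). Interior regularization of $v$ does not help on a right neighbourhood of $\varepsilon$. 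The fix — and this is exactly what \cite[Lemma 6.10]{AV19_QSEE_2} encodes — is to pass through a \emph{third} auxiliary setting: the paper takes $\wh{Y}_j=\Hs^{-1+2j,\wh{q}_0}$ with $\wh\alpha=0$ and $\wh p_0,\wh q_0$ large. Because $u(\varepsilon)$ is very smooth by \eqref{eq:H_regularization_NS}, it lies in $\wh{Y}^{\Tr}_{\wh p_0}$; since $\wh\alpha=0$, the corresponding solution has \emph{unweighted} $L^{\wh p_0}$-regularity up to $t=\varepsilon$; and the embeddings $\wh{Y}^{\Tr}_{\wh p_0}\hookrightarrow X^{\Tr}_{\a_{\crit},p}$ and $\wh{Y}_1\hookrightarrow X_{1-\a/p}$ transfer this back to the $(p,\a_{\crit},q,\s)$-framework. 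Your first paragraph only lists hypotheses ``in both parameter settings'' — you should add the third $\wh{Y}$-setting (and the two embeddings involving it, which hold for $\wh p_0,\wh q_0$ large), otherwise the verification of the lemma's hypotheses is incomplete and, as your own discussion shows, the two-setting version of the argument does not close.
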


\begin{proof}
The claim follows by applying \cite[Lemma 6.10]{AV19_QSEE_2} with $X_j=\Hs^{-1+2j+\s,q}(\Tor^d)$, $Y_j =\Hs^{-1+2j+\delta_0,q_0}(\Tor^d)$, $\wh{Y}_j = \Hs^{-1+2j , \wh{q}_0}(\Tor^d)$, $r=p_0$, $\wh{r}=\wh{p}_0$, $\alpha=\a_0$ and $\wh{\alpha}=0$ where $\wh{p}_0,\wh{q}_0>2$ are large enough. To check this, note that the required regularity assumptions are satisfied thanks to \eqref{eq:H_regularization_NS} applied to $(u,\sigma)$ and $(v,\tau)$. The required stochastic maximal regularity also holds in the $(Y_0, Y_1, p_0, \kappa_0)$-setting by Theorem \ref{t:maximal_Stokes_Tord} and the conditions on the parameters. Moreover, by Lemma \ref{l:hyp_H_NS} the conditions (HF)-(HG) of \cite[Section 4.1]{AV19_QSEE_2} hold in the $(X_0, X_1, p, \kappa_{\crit})$-setting, $(Y_0, Y_1, p_0, \kappa_0)$-setting, and the $(\wh{Y}_0, \wh{Y}_1, \wh{p}_0, 0)$-setting if $\wh{p}_0, \wh{q}_0$ are large enough. The required embeddings $\wh{Y}_{\wh{p_0}}^{\Tr}\hookrightarrow X^{\mathsf{Tr}}_{\a_{\crit},p}$ and $\wh{Y}_1\hookrightarrow X_{1-\frac{\a}{p}}$ also hold if $\wh{p}_0, \wh{q}_0$ are chosen large enough.
\end{proof}
Next we prove Theorem \ref{t:NS_serrin} by applying our blow-up criteria from \cite{AV19_QSEE_2}.
\begin{proof}[Proof of Theorem \ref{t:NS_serrin}]
By Definition \ref{def:sol_NS}, \eqref{eq:Navier_Stokes_generalized} is understood as a stochastic evolution equation \cite[(4.1)]{AV19_QSEE_2} with the choice \eqref{eq:ABFG_choice_NS}, $X_j=\Hs^{-1+2j+\reg,q}$, $\a_{\crit}=-1+\frac{p}{2}(2+\reg-\frac{d}{q})$, and $H=\ell^2$. As in the proof of Lemma \ref{l:NS_extrapolation}, Lemma \ref{l:hyp_H_NS} implies that (HF)-(HG) in \cite[Section 4.1]{AV19_QSEE_2} hold in the $(X_0, X_1, p, \kappa_{\crit})$-setting with $\varphi_j=\beta_j = \frac{1}{2}(1-\frac{\s}{2}+\frac{d}{2q})$, $\rho_j=1$ for $j\in \{1,2\}$. The same holds with $(p,\a_{\crit},q, \delta)$ replaced by $(p_0,\a_{0,\crit},q_0, \delta_0)$ where $\a_{0,\crit}:=-1+\frac{p_0}{2}(2+\s_0-\frac{d}{q_0})$ for the translated problem \eqref{eq:uvarepsilreg}. By the condition $q_0<\frac{d}{1+\delta_0}$, one can check that $\a_{0,\crit}<\frac{p_0}{2} - 1$.

Finally note that Theorem \ref{t:maximal_Stokes_Tord} can be applied in these two different settings, which will be used in Step 1 below when we apply the blow-up criteria. Below $\varepsilon,T\in (0,\infty)$ are fixed.

\emph{Step 1: Proof of \eqref{eq:Serrin_NS_H}}.
\cite[Theorem 4.11]{AV19_QSEE_2} applied to the $(p,\a_{\crit,0},q_0,\delta_0)$-solution $(v,\tau)$ to \eqref{eq:Navier_Stokes_generalized_epsilon} gives
\begin{equation*}
\P\Big(\varepsilon<\tau<T,\, \|v\|_{L^{p_0}(\varepsilon,\tau; \Hs^{\gamma_0,q_0}(\Tor^d;\R^d))}<\infty\Big)=0,
\end{equation*}
where we used $[\Hs^{-1+\delta_0,q_0},\Hs^{1+\delta_0,q_0}]_{1- \frac{\a_{0,\crit}}{p_0}} = \Hs^{\gamma_0,q_0}$ and that $\div \, v = 0$ on $[\varepsilon,\sigma)\times \O$.
Therefore, the required result follows from Lemma \ref{l:NS_extrapolation}.

\emph{Step 2: Proof of  \eqref{it:blow_up_NS_critical_space_not_sharp_B}--\eqref{it:Serrin_NS_L}}.

\eqref{it:blow_up_NS_critical_space_not_sharp_B}: Let $q_1>q_0$. Since $B^{\frac{d}{q_0}-1}_{q_1,\infty}(\T^d;\R^d)\hookrightarrow B^{\frac{d}{q_0}-1}_{q_2,\infty}(\T^d;\R^d)$ for $q_2<q_1$, replacing $q_1$ by a smaller number if necessary, we may assume $q_1\in (q_0,\frac{d}{1+\s})$. Pick $p_1\in (2,\infty)$ so large that
$$
\frac{2}{p_1}+\frac{d}{q_1} -1 < \frac{d}{q_0}-1.
$$
Note that $B^{d/q_0-1}_{q_1,\infty}\embed H^{\gamma_1,q_1}$ where $\gamma_1=
\frac{2}{p_1}+\frac{d}{q_1} -1$ and therefore
\begin{align*}
\P\Big(\varepsilon<\sigma<T,\,\sup_{t\in [\varepsilon,\sigma)}\|u(t)\|_{B^{\frac{d}{q_0}-1}_{q_1,\infty}}<\infty\Big)
\leq
\P\Big(\varepsilon<\sigma<T,\, \|u\|_{L^{p_1}(\varepsilon,\sigma;H^{\gamma_1,q_1})}<\infty\Big)=0,
\end{align*}
where in the last equality we apply \eqref{eq:Serrin_NS_H} with $(p_0,q_0,\gamma_0)$ replaced by $(p_1,q_1,\gamma_1)$.

\eqref{it:Serrin_NS_L}: Follows from \eqref{eq:Serrin_NS_H}.
\end{proof}

\subsection{Global well-posedness: Proofs of Theorems \ref{t:global_small_data} and  \ref{t:NS_initial_data_large_two_dimensional}}

We begin by deriving
Theorem \ref{t:NS_initial_data_large_two_dimensional} from the blow-up criterion \eqref{eq:Serrin_NS_H} in Theorem \ref{t:NS_serrin} and the energy estimates provided by Theorem \ref{t:2D_global_rough_noise}.
\begin{proof}[Proof of Theorem \ref{t:NS_initial_data_large_two_dimensional}]
Let $(u,\sigma)$ be the $(p,\a_{\crit},q,\s)$-solution provided by Theorem \ref{t:NS_critical_local}, where $\a_{\crit}$ is as in Theorem \ref{t:NS_initial_data_large_two_dimensional}.
By combining Lemma \ref{l:NS_extrapolation} with $p_0=q_0=2$, $\delta_0=0$ and Theorem \ref{t:2D_global_rough_noise}\eqref{it:NS_2D_global_weak_solution_estimate} from the appendix, we obtain that, for all $\varepsilon>0$,
\begin{equation}
\label{eq:u_pathwise_L_2_integrability}
u\in L^2(\varepsilon,\sigma;H^{1,2}(\Tor^2;\R^2)) \ \  \text{ a.s.\ on }\{\sigma>\varepsilon\}.
\end{equation}
Thus, for all $0<\varepsilon<T<\infty$,
\begin{equation}
\label{eq:P_0_d_2}
\P(\varepsilon<\sigma<T) \stackrel{\eqref{eq:u_pathwise_L_2_integrability}}{=}
\P\Big(\varepsilon<\sigma<T,\, \|u\|_{L^2(\varepsilon,\sigma;H^{1,2}(\Tor^2;\R^2))}<\infty\Big)
 =0
\end{equation}
where the last equality follows from \eqref{eq:Serrin_NS_H} of
Theorem \ref{t:NS_serrin} with $p_0=q_0=2$ and $\gamma_0=1$. Since $\sigma>0$ a.s.\ by Theorem \ref{t:NS_critical_local}, from \eqref{eq:P_0_d_2} and the arbitrariness of $0<\varepsilon<T<\infty$, it follows that $\P(\sigma<\infty)=0$. Hence $\sigma=\infty$ a.s.\ as desired.

The remaining assertions follow from Theorems \ref{t:NS_critical_local} and \ref{t:NS_high_order_regularity}.
\end{proof}

Next we prove Theorem \ref{t:global_small_data}. As in the proof of \cite[Theorem 1.4]{FS64_NS}, the main idea is to exploit the quadratic growth of the nonlinearity $\p[\div(u\otimes u)]$ to obtain an estimate where the LHS and the RHS have different scaling in $u$. Then the conclusion follows by choosing the data sufficiently small so that the quadratic growth of the RHS keeps sufficiently strong Sobolev norms of $u$ bounded over time. The nonlinearities $f_0, f$ and $g_{n}$ might contain lower order terms (see the estimate \eqref{eq:def_N_C0}), which need to be dealt with as well. This can be achieved by first working on small time intervals and then combine solutions as in \cite[Proposition 3.1]{AV21_SMR_torus}. The case of small time intervals is the content of the following result.

\begin{proposition}
\label{prop:global_small_data_small_interval}
Let Assumption \ref{ass:NS}$(p,\kappa,q,\s)$ be satisfied and suppose that one of the following conditions holds:
\begin{itemize}
\item $ \delta\in [-\frac{1}{2} ,0]$, $\frac{d}{2+\s}<q<\frac{d}{1+\s}$, $\frac{2}{p}+\frac{d}{q}\leq 2+\s$, and $\a=\a_{\crit}=-1+\frac{p}{2}\big(2+\s-\frac{d}{q}\big)$;
\item $\delta = \kappa =\kappa_{\crit}=0$ and $p=q=d=2$.
\end{itemize}
Assume that \eqref{eq:condfggrowtheq} holds for some constants $M_1,M_2>0$. Then there exists $T_0\in (0,T]$ depending only on the parameters in Assumption \ref{ass:NS} and $M_2$ for which the following assertion holds:
For all $\varepsilon_0\in (0,1)$ there exists a constant $C_{\varepsilon_0}>0$ such that, for all $t\in [0,T]$ and $v \in L^p_{\F_t}(\Omega;\Bs^{d/q-1}_{q,p})$ with
$$
\E\|v\|_{B^{d/q-1}_{q,p}}^p+M_1^p\leq C_{\varepsilon_0},
$$
the $(p,\a_{\crit},q,\s)$-solution $(u_t,\sigma_t)$ to \eqref{eq:Navier_Stokes_generalized} with initial data $v$ at time $t$ satisfies:
\begin{enumerate}[{\rm(1)}]
\item\label{it:life_taut_largeT}
$
\P(\sigma_t\geq  T_t)>1-\varepsilon_0$ where $ T_t:=(t+T_0)\wedge T.
$
\item\label{it:estimate_taut_largeT} There exists a stopping time $\tau_t\in (t,\sigma_t]$ a.s.\ such that $\P(\tau_t\geq T_t)>1-\varepsilon_0$ and
\begin{align*}
\E\Big[\one_{\{\tau_t\geq   T_t\}} \|u_t\|_{H^{\theta,p}(t,t+T_0,w_{\a_{\crit}}^{t};H^{1+\s-2\theta,q})}^p\Big]
\leq K_{\theta}(
\E\|v\|_{B^{d/q-1}_{q,p}}^p+M_{1}^p)
\end{align*}
for all $\theta\in [0,\tfrac{1}{2})$, where $K_{\theta}$ is a constant independent of $(t,v)$. Here in case $p=q=d=2$ and $\theta>0$ the {\normalfont{LHS}} of the above estimate should be replaced by $\E\big[\one_{\{\tau_t\geq   T_t\}} \|u_t\|_{C([t,\tau_t];\Ls^2)}^2\big]$.
\end{enumerate}
\end{proposition}

The key point in the above is the independence of $T_0$ on $t\in [0,T]$ and $\varepsilon_0>0$.

Note that $(p,\a_{\crit},q,\s)$-solutions to \eqref{eq:Navier_Stokes_generalized} at a time $t>0$ can be defined as in Definition \ref{def:sol_NS} and their existence is ensured by Theorem \ref{t:NS_critical_local} by a shift argument.
In the case of pure transport noise (i.e.\ $f_j = 0$ and $g_n=0$), the proof of Proposition \ref{prop:global_small_data_small_interval} shows that $T_0=T$, and thus the iteration argument needed to deduce Theorem \ref{t:global_small_data}  from Proposition \ref{prop:global_small_data_small_interval} can be avoided.

Next we first show how Proposition \ref{prop:global_small_data_small_interval} implies Theorem \ref{t:global_small_data}. We postpone the proof of Proposition \ref{prop:global_small_data_small_interval} to Subsection \ref{sss:global_small_data_small_interval}.

\begin{proof}[Proof of Theorem \ref{t:global_small_data}]
We begin by collecting some useful facts.
Fix $T\in (0,\infty)$ and $\varepsilon\in (0,1)$.
Let $(T_0,C_{\varepsilon_0})$ be as in Proposition \ref{prop:global_small_data_small_interval}, set $N_0:=\lceil \frac{T}{T_0}\rceil$ and fix $\theta_0\in (\frac{1+\a_{\crit}}{p},\frac{1}{2})$. Let $c_0$ be the norm of the embedding (see \cite[Theorem 1.2]{ALV21} and \cite[Proposition 2.5]{AV19_QSEE_1})
\begin{equation}
\label{eq:trace_embedding_proof_iteration}
H^{\theta_0,p}(0,T_0;w_{\a_{\crit}};H^{1+\s-2\theta_0,q})\cap L^p(0,T_0,w_{\a_{\crit}};H^{1+\s,q})
\embed C([0,T_0];B^{d/q-1}_{q,p}),
\end{equation}
and set $K_0:=(K_{\theta_0}+K_0)c_0$ where $K_{\theta_0}$ is as Proposition \ref{prop:global_small_data_small_interval}\eqref{it:estimate_taut_largeT}. Without loss of generality we assume $K_0\geq 1$.

Below we prove the assertions of Theorem \ref{t:global_small_data} if $C_{\varepsilon,T}$ is chosen as:
\begin{equation}
\label{eq:choice_smallness_global_3D}
 C_{\varepsilon,T}:=\frac{C_{\varepsilon_0}}{2K_0^{N_0}}
 \qquad \text{ where }\qquad
\varepsilon_0:= \frac{\varepsilon}{2^{N_0}} .
\end{equation}
Note that $C_{\varepsilon,T}$ is independent of $u_0$  and $M_1$, since $(C_{\varepsilon_0},T_0)$ depends only on the parameters in Assumption \ref{ass:NS} and $M_2$.

We prove Theorem \ref{t:global_small_data} by an iteration argument. Let $(t_n)_{n=0}^{N_0}$ be a partition of $[0,T]$ with step $\leq T_0/2$, i.e.\ $0=t_0<t_1<\dots<t_{N_0-1}<t_{N_0}=T$ and $|t_n-t_{n-1}|\leq  T_0/2$
for all $0\leq n\leq N_0$. We claim that for all $n\in \{1,\dots,N_0-1\}$ there exist a stopping time $\tau_{n}\in [0,\sigma)$ such that, for all $\theta\in[0,\frac{1+\a_{\crit}}{p})$,
\begin{align}
\label{eq:inductive_assumption_proof_global_1}
\P\big(\tau_n> t_{n}\big)&>1-2^{n-N_0} \varepsilon ,\\
\label{eq:inductive_assumption_proof_global_2}
\E\big[\one_{\{\tau_n> t_n\}}\|u(t_n)\|_{B^{d/q-1}_{q,p}}^p\big]+M_1^p&\leq K_0^{n-N_0} C_{\varepsilon_0},\\
\label{eq:inductive_assumption_proof_global_3}
\E\big[\one_{\{\tau_{n}>  t_n\}} \|u\|_{H^{\theta,p}(0,t_n,w_{\a_{\crit}};\Hs^{1+\s-2\theta,q})}^p\big]
&\lesssim_{\theta,n}
\E\|u_0\|_{B^{d/q-1}_{q,p}}^p+M_1^p.
\end{align}
Indeed, the claims of Theorem \ref{t:global_small_data} are equivalent to the case $n=N_0$ of \eqref{eq:inductive_assumption_proof_global_1}-\eqref{eq:inductive_assumption_proof_global_3}.
Let us begin by noticing that \eqref{eq:inductive_assumption_proof_global_1}-\eqref{eq:inductive_assumption_proof_global_3} for $n\in \{1,2\}$ hold due
Proposition \ref{prop:global_small_data_small_interval}, \eqref{eq:choice_smallness_global_3D} and $t_1\leq t_2\leq T_0$.
Hence it remains to show that \eqref{eq:inductive_assumption_proof_global_1}-\eqref{eq:inductive_assumption_proof_global_1} hold for $n+1$ whenever they hold for $n\in \{2,\dots,N_0-1\}$.
To this end, set
$$
v:=\one_{\{\tau_{n-1}>t_{n-1}\}} u(t_{n-1})\in L^{p}_{\F_{t_{n-1}}}(\O;B^{d/q-1}_{q,p}).
$$
Note that, up to a time shift, Theorem \ref{t:NS_critical_local} ensures the existence of a $(p,\a_{\crit},q,\s)$-solution
Let $(u_{n-1},\sigma_{n-1})$ to \eqref{eq:Navier_Stokes_generalized} with initial data $v$ at time $t_{n-1}$. Thus,
Proposition \ref{prop:global_small_data_small_interval} provides a stopping time $\lambda_{n-1}\in (t_{n-1},\sigma_{n-1}]$ a.s.\ such that
\begin{align}
\label{eq:lambda_large_probability_epsilon0}
\P(\lambda_{n-1}>  t_{n+1})&>1-\varepsilon_0=1-2^{-N_0}\varepsilon,
\end{align}
and for  $(c_0,K_0)$ as described around \eqref{eq:trace_embedding_proof_iteration},
\begin{align}
\label{eq:lambda_large_probability_epsilon01}
&\E\Big[\one_{\{\lambda_{n-1}\geq   t_{n+1}\}} \sup_{t\in [\sigma_{n-1},\lambda_{n-1})}\|u_{n-1}(t)\|_{B^{d/q-1}_{q,p}}^p\Big]\\
\nonumber
&\stackrel{\eqref{eq:trace_embedding_proof_iteration}}{\leq}
 c_0\E\Big[\one_{\{\lambda_{n-1}\geq   t_{n+1}\}} \max_{\theta\in \{0,\theta_0\}}
\|u_{n-1}(t)\|^p_{H^{\theta,p}(t_{n-1},t_{n+1},w_{\a}^{t_{n-1}};H^{1+\s-2\theta})}\Big]\\
\nonumber
&\leq
K_0\Big( \E\big[\one_{\{\tau_{n-1}> t_{n-1}\}} \|u(t_{n-1})\|_{B^{d/q-1}_{q,p}}^p\big]+ M_1^p\Big)
\leq K_0^{n+1-N_0} C_{\varepsilon_0}.
\end{align}

Arguing as in the proof of Theorem \ref{t:NS_serrin} proved in Subsection \ref{ss:NS_proofs_blow_up}, by maximality and instantaneous regularization of $(p,\a_{\crit},q,\s)$-solution, we have
\begin{equation}
\label{eq:sigma_equal_sigma_k}
\begin{aligned}
\sigma&=\sigma_{n-1}\  \text{ on } \ \{\tau_{n-1}>t_{n-1}\},
\\ u&=u_{n-1}\  \text{ on }\ [t_{n-1},\tau_{n-1})\times \{\tau_{n-1}>t_{n-1}\}.
\end{aligned}
\end{equation}
Set $\tau_{n+1}:=\one_{\{\tau_{n-1}>t_{n-1}\}} \lambda_{n-1}+ t_{n-1}\one_{\{\tau_{n-1}\leq t_{n-1}\}}$. Note that $\tau_{n+1}$ is a stopping time since $\lambda_{n-1} > t_{n-1}$. Moreover, \eqref{eq:inductive_assumption_proof_global_1} and  \eqref{eq:lambda_large_probability_epsilon0} imply
$$
\P(\tau_{n+1}>t_{n+1})> 1-2^{n-N_0} \varepsilon -  2^{-N_0}\varepsilon\geq 1-2^{n+1-N_0}\varepsilon_0.
$$
In addition \eqref{eq:lambda_large_probability_epsilon01} and \eqref{eq:sigma_equal_sigma_k} yield
\begin{align*}
&\E\Big[\one_{\{\tau_{n+1}> t_{n+1}\}}\sup_{s\in [t_{n},t_{n+1}]} \|u(s)\|_{B^{d/q-1}_{q,p}}^p\Big]
\leq K_0^{n+1-N_0} C_{\varepsilon_0}.
\end{align*}
Hence \eqref{eq:inductive_assumption_proof_global_1}-\eqref{eq:inductive_assumption_proof_global_2} for $n+1$ are proved. Finally, \eqref{eq:inductive_assumption_proof_global_3} with $n+1$ follows by combining Proposition \ref{prop:global_small_data_small_interval}\eqref{it:estimate_taut_largeT} and the fact that the restriction operator $f\mapsto f|_{[t_n,t_{n+1}]}$ maps  $H^{\theta,p}(t_{n-1},t_{n+1},w_{\a_{\crit}}^{t_{n-1}};H^{1+\s-2\theta,q})$ into $H^{\theta,p}(t_{n},t_{n+1};H^{1+\s-2\theta,q})$ with norm depending only on $p, \kappa_{\crit}$ and $|t_{n-1}-t_n|=T_0/2$ see \cite[Proposition 2.1(1)]{AV19_QSEE_2}.
\end{proof}

\subsubsection{Proof of Proposition \ref{prop:global_small_data_small_interval}}
\label{sss:global_small_data_small_interval}
In this subsection we use the notation introduced in \eqref{eq:ABFG_choice_NS}. Also recall from Lemma \ref{l:hyp_H_NS} that $\beta = \frac{1}{2}(1+\frac{\s}{2}-\frac{d}{2q})$ and $X_{\beta} = \Hs^{\frac{d}{2q}+\frac{\delta}{2},q}$. To simplify the notation we consider only the case $t=0$. The general situation $t>0$ follows verbatim as the constants in Theorem \ref{t:maximal_Stokes_Tord} do not depend on $s\in [0,T]$.
To prove Theorem \ref{t:global_small_data} we need a suitable $L^p$-space which allows us to bound the nonlinearities  in \eqref{eq:Navier_Stokes_generalized}. For each $t\in (0,\infty)$ it is defined as
\begin{equation}
\label{eq:def_X_space_NS}
\X(t):=L^{2p} (0,t,w_{\a_{\crit}};X_{\beta}), \ \text{ where } \  \a_{\crit}=-1+\frac{p}{2}\Big(2+\s-\frac{d}{q}\Big).
\end{equation}
By Lemma \ref{l:hyp_H_NS} the above space coincide with the abstract one introduced in \cite[Subsection 4.3]{AV19_QSEE_1}, where we also showed
\[\bigcap_{\theta\in [0,1/2)} H^{\theta,p}(0,t;w_{\a_{\crit}};X_{1-\theta})\subseteq \X(t).\]
In particular, the solution $(u, \sigma)$ provided by Theorem \ref{t:NS_critical_local} satisfies a.s.\ for all $t\in (0,\sigma)$,  $u\in \X(t)$.

Arguing as in Lemma \ref{l:hyp_H_NS} and using \eqref{eq:condfggrowtheq} one can check that we can find a constant $C_0$ (depending on $M_2$) such that for all $v\in X_{\beta}$
\begin{equation}
\label{eq:def_N_C0}
N(v):=\|F(\cdot,v)\|_{X_0}+\|G(\cdot,v)\|_{\gamma(\ell^2,X_{1/2})}
\leq
C_0 M_1+ C_0(\|v\|_{X_{\beta}}+\|v\|_{X_{\beta}}^2).
\end{equation}
Therefore, by H\"older's inequality there exists a constant $m_{t}>0$ (depending only on $t, p,\a_{\crit}$ and $m_t$ in \eqref{eq:def_N_C0}) such that $\lim_{t\downarrow 0} m_t=0$ and for all $v\in \X(t)$ a.s.
\begin{equation}
\label{eq:estimate_F_G_stability_NS}
\begin{aligned}
\|N(v)\|_{L^p(0,t,w_{\a_{\crit}})}^p
\leq
C_0^pM_1^p + m_t \|v\|_{\X(t)}^p + C_0^p  \|v\|_{\X(t)}^{2p}.
 \end{aligned}
\end{equation}
Without loss of generality we assume that $t\mapsto m_t$ is non-decreasing.
This estimate motivates once more the definition of $\X(t)$ and plays a crucial  role in the analysis below.

The next result is a special case of \cite[Lemma 5.3]{AV19_QSEE_2}, and provides an a priori estimate for the linear part of the equation based on maximal $L^p$-regularity.
\begin{lemma}
\label{l:linear_estimate_NS}
Let Assumption \ref{ass:NS}$(p,\kappa_{\crit},q,\s)$ be satisfied. Fix $T\in (0,\infty)$.
Let $A=A_{\Stok}$ and $B=B_{\Stok}$ where $(A_{\Stok},B_{\Stok})$ are as in \eqref{eq:generalizaed_Stokes_couple_divergence}.
Then there exists $K> 0$ depending only on $p,q,\s$ and the constant $C_2$ in Theorem \ref{t:maximal_Stokes_Tord} such that the following holds:

For any stopping time $\tau:\O\to[0,T]$, and any
$$
v_0\in L^0_{\F_0}(\O;X^{\mathsf{Tr}}_{\a_{\crit},p}), \  f\in L^p_{\Progress}(( 0,\tau)\times \O,w_{\a_{\crit}};X_0), \ g\in L^p_{\Progress}(( 0,\tau)\times \O,w_{\a_{\crit}};\gamma(\ell^2,X_{1/2})),
$$ and any $(p, \a_{\crit},q,\delta)$-solution $v\in L^p_{\Progress}(( 0,\tau)\times \O ,w_{\a_{\crit}};X_1)$ to
\begin{equation*}
\left\{
\begin{aligned}
\dd v + A v \,\dd t&=f \, \dd t+ \big(Bv +  g\big)\, \dd W_{\ell^2},
\\ v(0)&=v_0,
\end{aligned}\right.
\end{equation*}
on $[ 0,\tau]\times \O$ satisfies the following estimate
\begin{align*}
\|v\|_{L^p(\Omega;\X(\tau))}^p \leq K^p\big( \|v_0\|_{L^p(\Omega;X^{\mathsf{Tr}}_{\a_{\crit},p})}^p
&+\|f\|_{L^p(( 0,\tau)\times \O,w_{\a_{\crit}};X_{0})}^p\\
& +\|g\|_{L^p(( 0,\tau)\times \O,w_{\a_{\crit}};\gamma(\ell^2,X_{1/2}))}^p\big).
\end{align*}
\end{lemma}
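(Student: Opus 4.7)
The plan is to invoke \cite[Lemma 5.3]{AV19_QSEE_2} directly, which is an abstract a priori estimate of exactly this form for linear stochastic evolution equations enjoying stochastic maximal $L^p$-regularity. Only two hypotheses of that abstract result need to be verified in the present setting, and everything else is a routine consequence.

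First, one needs $(A, B) \in \mathcal{SMR}^{\bullet}_{p, \a_{\crit}}(T)$, i.e.\ the turbulent Stokes couple in \eqref{eq:ABFG_choice_NS} must admit stochastic maximal $L^p$-regularity with the critical weight $w_{\a_{\crit}}$ on $[0, T]$. This is exactly the content of Theorem \ref{t:maximal_Stokes_Tord} applied with $s=0$ in the $(p, q, \a_{\crit}, \delta)$-setting guaranteed by Assumption \ref{ass:NS}$(p, \a_{\crit}, q, \delta)$; that theorem also encodes the fact that the constants depend only on $p, q, \delta, T$ and the coefficient bounds, so that the constant $C_2$ in its statement is the same $C_2$ appearing in the present lemma.

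Second, one needs the parabolic-Sobolev embedding
\[
\bigcap_{\theta \in [0, 1/2)} H^{\theta, p}\bigl(0, \tau, w_{\a_{\crit}}; X_{1-\theta}\bigr) \;\hookrightarrow\; \X(\tau) = L^{2p}\bigl(0, \tau, w_{\a_{\crit}}; X_\beta\bigr),
\]
with constant uniform in the stopping time $\tau\le T$. As noted in the text preceding the lemma, this embedding is exactly the content of \cite[Subsection 4.3]{AV19_QSEE_1}; it reflects the critical balance $\frac{2(1+\a_{\crit})}{p} + \frac{d}{q} = 2 + \delta$ (equivalently $X^{\mathsf{Tr}}_{\a_{\crit},p} = \Bs^{\frac{d}{q}-1}_{q,p}$ is critical for the scaling of the nonlinearity, cf.\ Lemma \ref{l:hyp_H_NS}). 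Uniformity in $\tau$ comes for free from the monotonicity of the norms as functions of the upper endpoint, after observing $\a_{\crit} \in [0, p/2 - 1)$ so that $w_{\a_{\crit}} \in L^1(0,T)$.

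With (i) and (ii) in place, \cite[Lemma 5.3]{AV19_QSEE_2} applies verbatim to the linear problem for $v$ and delivers the claimed estimate, with $K$ depending only on $p, q, \delta, T$ and on $C_2$ from Theorem \ref{t:maximal_Stokes_Tord} (which in turn absorbs the embedding constant). I do not foresee a substantive obstacle; the mildest technical point is the handling of the stopping time $\tau$ rather than a deterministic endpoint, which is standard in this framework: one extends $f, g$ by zero on $\llbracket \tau, T \rrbracket$, applies the deterministic-endpoint estimate on $[0, T]$, and uses the uniqueness of strong solutions together with the progressive measurability of $v$ to restrict back to $\llbracket 0, \tau \rrbracket$.
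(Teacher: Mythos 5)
Your proposal is correct and follows exactly the same route as the paper, which simply observes that the lemma is a special case of \cite[Lemma 5.3]{AV19_QSEE_2} once stochastic maximal $L^p$-regularity of the turbulent Stokes couple (Theorem \ref{t:maximal_Stokes_Tord}) and the embedding into $\X(\tau)$ from \cite[Subsection 4.3]{AV19_QSEE_1} are in place. You have merely spelled out the hypothesis-checking that the paper leaves implicit, and your remarks on uniformity in $\tau$ and the reduction to a deterministic endpoint are the standard ones and pose no issue.
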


Next we turn to the proof of Proposition \ref{prop:global_small_data_small_interval}.

\begin{proof}[Proof of Proposition \ref{prop:global_small_data_small_interval}]
Throughout the proof we fix $\varepsilon_0\in (0,\infty)$. 
Recall that for notational convenience we are assuming $t=0$.
We claim that there exist $T_0,C_{\varepsilon_0},r_0>0$ independent of $u_0$ such that
\begin{equation}
\label{eq:alternative_implication_global_perturbation}
\E\big\|u_0\big\|_{B^{\frac{d}{q}-1}_{q,p}(\Tor^d;\R^d)}^p
\leq C_{\varepsilon_0}
\quad
\Longrightarrow \quad
\P(\W)>1-\varepsilon_0,
\end{equation}
where
$$\W = \big\{\|u\|_{\X(\sigma\wedge T_{0})}\leq r_{0}\big\}.
$$
Moreover, we obtain that $T_0$ is independent of $\varepsilon_0$, see \eqref{eq:choice_T0_proof_small} below.

We now split the proof into several steps.
In Step 1 we obtain Proposition \ref{prop:global_small_data_small_interval}\eqref{it:life_taut_largeT} from  \eqref{eq:alternative_implication_global_perturbation} by applying the Serrin criterion \eqref{eq:Serrin_NS_H} of Theorem \ref{t:NS_serrin}, while in Steps 2 and 3 we prove the claim \eqref{eq:alternative_implication_global_perturbation}. In Step 4 we show Proposition \ref{prop:global_small_data_small_interval}\eqref{it:estimate_taut_largeT}.

{\em Step 1: \eqref{eq:alternative_implication_global_perturbation} implies Proposition \ref{prop:global_small_data_small_interval}\eqref{it:life_taut_largeT} }. By \eqref{eq:estimate_F_G_stability_NS} on $\W$ we find
\begin{equation*}
\|N(u)\|_{L^p(0,\sigma\wedge T_0,w_{\a_{\crit}})}^p \leq m_{T_0}^p(1+ r_{0}^p)  + C_0 r_0^{2p}=:C_1.
\end{equation*}
Define the stopping time $\tau$ by $\tau = \inf\{t\in [0,\sigma): \|N(u)\|_{L^p(0,t,w_{\a_{\crit}})}^p\geq C_1+1\}\wedge T_0$, where we set $\inf\emptyset =\sigma\wedge T_0$. Then $\tau = \sigma\wedge T_0$ on $\W$.

By Theorem \ref{t:maximal_Stokes_Tord}, $(A,B)=(A_{\Stok},B_{\Stok})$ has stochastic maximal $L^p$-regularity on $[0,T_0]$. Since $(u,\sigma)$ is a $(p,\a_{\crit},q,\s)$-solution to \eqref{eq:Navier_Stokes_generalized}, as in \cite[Proposition 3.12(2)]{AV19_QSEE_1} it follows that a.s. on $[0,\tau)$
\begin{align*}
\dd u +A u \, \dd t = \one_{[0,\tau)} F(\cdot, u)\, \dd t + \big(Bu + \one_{[0,\tau)} G(\cdot, u)\big)\, \dd W_{\ell^2}
\end{align*}
and $u(0) = u_0$.
Now Theorem \ref{t:maximal_Stokes_Tord} with $\theta = \frac{\a_{\crit}}{p}$ gives
$u\in L^p(\O;H^{\frac{\a_{\crit}}{p},p}(0,\tau,w_{\a_{\crit}};X_{1-\frac{\a_{\crit}}{p}}))$.
Using $\tau=\sigma\wedge T_0$ on $\W$, by Sobolev embedding \cite[Proposition 2.7]{AV19_QSEE_1} we obtain
\begin{equation}
\begin{aligned}
\label{eq:upathproprtylocO}
u\in H^{\frac{\a_{\crit}}{p},p}(0,\sigma\wedge T_0,w_{\a_{\crit}};X_{1-\frac{\a_{\crit}}{p}})
 &\hookrightarrow L^p(0,\sigma\wedge T_0;X_{1-\frac{\a_{\crit}}{p}}) \\
&= L^p(0,\sigma\wedge T_0;\Hs^{\gamma,q}(\Tor^d)) \ \ \text{a.s.\ on $\W$},
\end{aligned}
\end{equation}
where $\gamma = 1+\delta-2\frac{\a_{\crit}}{p}=\frac{2}{p}+\frac{d}{q}-1$.  It follows that
\begin{equation}
\label{eq:sigma_equal_T_on_W}
\begin{aligned}
\P\big(\{\sigma<T_0\}\cap \W\big)&\stackrel{(i)}{=}
\lim_{s\downarrow 0}\P\Big(\Big\{s<\sigma<T_0,\, \|u\|_{L^p(s,\sigma;H^{\gamma,q})}<\infty\Big\}\cap\W \Big)\\
&\leq  \limsup_{s\downarrow 0}
\P\Big(s<\sigma<T_0, \, \|u\|_{L^p(s,\sigma;H^{\gamma,q})}<\infty\Big)\stackrel{(ii)}{=} 0.
\end{aligned}
\end{equation}
Here in $(i)$ we used $\sigma>0$ a.s.\ (see Theorem \ref{t:NS_critical_local}) and \eqref{eq:upathproprtylocO}. In $(ii)$ we used \eqref{eq:Serrin_NS_H} of Theorem \ref{t:NS_serrin} with $p_0 = p$, $q_0 = q$ and $\gamma_0 = \gamma$. Therefore, $\sigma \geq  T_0$ on $\W$ and this gives the result of the Proposition \ref{prop:global_small_data_small_interval}\eqref{it:life_taut_largeT}. Thus Step 1 is proved.

Next we turn to the proof of \eqref{eq:alternative_implication_global_perturbation}. Before we continue it is important to recall that from the discussion below \eqref{eq:def_X_space_NS} a.s.\ for all $t\in (0,\sigma)$, $u\in \X(t)$.

\emph{Step 2: Let $K$ be as in Lemma \ref{l:linear_estimate_NS}. Let $m_t$ be as in \eqref{eq:estimate_F_G_stability_NS}. Fix $T_0>0$ such that
\begin{equation}
\label{eq:choice_T0_proof_small}
K^p  m_{T_0}^p  \leq \tfrac{1}{2}.
\end{equation}
Then there exists $\Constant>0$ depending only on $K$ and $C_0$ such that for any $N\geq 1$ and any stopping time $\mu$ satisfying $0\leq  \mu \leq \sigma\wedge T_0$ and $\|u\|_{\X(\mu)}\leq N$ a.s.,}
\begin{align*}
\E \big[\psi_R(\|u\|_{\X(\mu)}^p)\big]\leq \E\|u_0\|_{B^{\frac{d}{q}-1}_{q,p}}^p+M_1^p, \quad \text{with} \quad \psi_R(x)=\frac{x}{R}-x^{2}.
\end{align*}

In order to prove the assertion in Step 2 we use a localization argument. Set
\begin{equation*}
\begin{aligned}
\sigma_n:=\inf\big\{t\in [0,\sigma)\,:\,\|u\|_{L^p(0,t,w_{\a_{\crit}};X_1)}+ \|u\|_{\X(t)}\geq n\big\}, \ \ n\geq 1,
\end{aligned}
\end{equation*}
where we set $\inf\emptyset =\sigma$, and let $\mu_n:=\mu\wedge\sigma_n$. Then $\lim_{n\to \infty} \sigma_n= \sigma$ a.s., $\lim_{n\to \infty} \mu_n=\mu$, and for all $n\geq 1$,
\begin{equation*}
\|u\|_{\X(\sigma_n)} + \|u\|_{L^p(0,\sigma_n,w_{\a_{\crit}};X_1)}\leq n.
\end{equation*}
As in Step 1 it follows from Lemma \ref{l:linear_estimate_NS}, $\mu_n\leq T_0\leq T$ and \eqref{eq:estimate_F_G_stability_NS} that
\begin{align*}
\E \|u\|_{\X(\mu_n)}^p
&\leq K^p\Big( \E \|u_0\|_{B^{d/q-1}_{q,p}}^p+ \E\|F(\cdot,u)\|_{L^p(( 0,\mu_n)\times\O,w_{\crit};X_0)}^p  \\ & \qquad +\E\|G(\cdot,u)\|_{L^p(( 0,\mu_n)\times \O,w_{\crit};X_{1/2})}^p
\Big)
\\ &\leq K^p(\|u_0\|_{B^{d/q-1}_{q,p}}^p + C_0^pM_1^p)\\
& \qquad + K^p m_{T_0}^p\E \|u\|_{\X(\mu_n)}^{p} +K^p C_0^p\E \|u\|_{\X(\mu_n)}^{2p}
\end{align*}
The choice \eqref{eq:choice_T0_proof_small} and the above estimate imply
\[\E \|u\|_{\X(\mu_n)}^p \leq 2 K^p(\|u_0\|_{B^{d/q-1}_{q,p}}^p + C_0^pM_1^p)
+ 2K^p C_0^p\E \|u\|_{\X(\mu_n)}^{2p}.
\]
Letting $n\to \infty$, the desired estimate follows after division by $R=2K^p (1\vee C_0^p)$.

\textit{Step 3:  \eqref{eq:alternative_implication_global_perturbation} holds with $T_0$ satisfying \eqref{eq:choice_T0_proof_small} and 
\begin{equation}
\label{eq:choice_C_varepsilon_0_r_0}
C_{\varepsilon_0}:=\frac{\varepsilon_0}{8R^2} \qquad \text{ and }\qquad r_0:= \frac{1}{(2R)^{1/p}},
\end{equation}
where $R$ is as in Step 2.} Let $\psi_R$ be as in Step 2. It is easy to check that
$\psi_R$ has a unique maximum on $\R_+$ given by $\frac{1}{4R^2}$ which is attained at $x=\frac{1}{2R}$. With the choice in \eqref{eq:choice_C_varepsilon_0_r_0}, we have
$\W=\{\|u\|_{\X(\sigma\wedge T_0)}\leq (2R)^{-1/p}\}$ and set
\begin{align}
\label{eq:def_lambda_proof_small}
\mu&:=
\inf\{t\in [0, \sigma)\,:\, \|u\|_{\X(t)}\geq (2R)^{-1/p}\}\wedge T_0,
\end{align}
where  $\inf\emptyset:=\sigma\wedge T_0$. 
To derive a contradiction suppose that
\begin{equation}
\label{eq:smallness_initial_data}
\text{(a)} \ \ \ \E\|u_0\|_{B^{d/q-1}_{q,p}}^p+M_1^p\leq C_{\varepsilon_0}, \qquad
\text{and} \qquad \text{ (b)}\ \ \  \P(\W)\leq 1-\varepsilon_0.
\end{equation}
From the definition of $\W$ and \eqref{eq:def_lambda_proof_small}, we find that $\mu< \sigma\wedge T_0$ a.s.\ on $ \O\setminus \W$. Moreover,
\begin{align}
\label{eq:f_reach_max_1}
\psi_R(\|u\|_{\X(\mu)}^p)&=\frac{1}{4R^2} \  \text{ a.s.\ on }\O\setminus \W, \ \  \text{and} \ \
  \psi_R(\|u\|_{\X(\mu)}^p) \geq 0 \  \text{ a.s.\ on } \W.
 \end{align}
Therefore,
\begin{align*}
\E\Big[\psi_R(\|u\|_{\X(\mu)}^p)\Big]
&=\E\Big[\psi_R(\|u\|_{\X(\mu)}^p)\one_{ \W}\Big]
+\E\Big[\psi_R(\|u\|_{\X(\mu)}^p)\one_{\O\setminus  \W}\Big]\\
&\stackrel{\eqref{eq:f_reach_max_1}}{\geq}
\P(\O\setminus  \W) \frac{1}{4R^2} \\
&\stackrel{\eqref{eq:smallness_initial_data}_{\text{(b)}}}{\geq } \frac{\varepsilon_0}{4 R^2}\stackrel{\eqref{eq:smallness_initial_data}_{\text{(a)}}}{\geq }  2\big(\E\|u_0\|_{B^{d/q-1}_{q,p}}^p+M_1^p\big).
\end{align*}
Since without loss of generality we can assume $M_1>0$, the latter contradicts Step 2. Thus $\P(\W)>1-\varepsilon_0$ as desired.

{\em Step 4: Proof of Proposition \ref{prop:global_small_data_small_interval}\eqref{it:estimate_taut_largeT}}.
Let $\tau$ be stopping time defined by the following variation of \eqref{eq:def_lambda_proof_small}:
\begin{equation}
\label{eq:def_tau_small_T_0}
\tau:=
\inf\Big\{t\in [0, \sigma)\,:\, \|u\|_{\X(t)}\geq \Big(\frac{4}{3}R\Big)^{-1/p}\Big\},
\end{equation}
where  $\inf\emptyset:=\sigma$. Then from Step 3, one can check that $\tau\wedge T_0=\sigma\wedge T_0$ on $\W$, and thus
\[\P(\tau\geq T_0) \geq \P(\{\tau\geq T_0\}\cap \W) = \P(\{\sigma\geq T_0\}\cap \W) \stackrel{\eqref{eq:sigma_equal_T_on_W}}{=} \P(\sigma\geq T_0)>1-\varepsilon_0.\]
Also note that $\|u\|_{\X(\tau)}^p\leq(\frac{4}{3}R)^{-1}$ a.s.\ and therefore $\E\|u\|_{\X(\tau)}^{2p}\leq (\frac{4}{3}R)^{-1} \E\|u\|_{\X(\tau)}^p$.
Thus Step 2 ensures that
$$
\E\|u\|_{\X(\tau)}^p \leq R (\E\|u_0\|_{B_{q,p}^{d/q-1}}^p +  M_1^p )+ \frac{3}{4}\E\|u\|_{\X(\tau)}^p
$$
and hence $
\E\|u\|_{\X(\tau)}^p\leq 4R(  \E\|u_0\|_{B_{q,p}^{d/q-1}}^p+M_1^p)$. Moreover,
$$
\E\|u\|_{\X(\tau)}^{2p}=\E\big[\|u\|_{\X(\tau)}^p \|u\|_{\X(\tau)}^p\big]
\stackrel{\eqref{eq:def_tau_small_T_0}}{\leq} 3\big( \E\|u_0\|_{B_{q,p}^{d/q-1}}^p+ M_1^p\big).
$$
Now the claim follows from Theorem \ref{t:maximal_Stokes_Tord} and \eqref{eq:estimate_F_G_stability_NS}.
\end{proof}

\begin{appendix}
\section{Global well-posedness for $d=2$ with $L^{\infty}$--noise}
\label{s:global_L_2_rough_noise}
The aim of this appendix is to prove global well--posedness for the stochastic Navier-Stokes equations \eqref{eq:Navier_Stokes_generalized} in two dimensions with $\Ls^2$--data and $L^\infty$-noise of transport type.
Although such result is essentially known, our approach based on maximal $L^2$-regularity techniques seems new and provides an improvement of several of the known results. The reader is referred to Remark \ref{r:comparison_2D_other_results} below for a comparison with other approaches.

The results presented in this appendix are used to prove Theorems \ref{t:NS_initial_data_large_two_dimensional} and will be presented more or less independently of the proofs given in the paper.

Here we consider the following stochastic Navier-Stokes equations on a domain $\Dom\subseteq \R^d$
\begin{equation}
\label{eq:Navier_Stokes_appendix}
\left\{
\begin{aligned}
\dd u&=\Big[\div(a\cdot\nabla u)  - \div(u\otimes u ) +f_0(\cdot,u)+\div(f(\cdot,u))\\
&\qquad \qquad \qquad \qquad\qquad \qquad \qquad\qquad -\nabla p + \partial_h \wt{p}+ \partial_{\hp}^2 \wt{p} \Big] \, \dd t \\
& \qquad \quad +\sum_{n\geq 1}\big[(\btwod_{n}\cdot\nabla) u -\nabla \wt{p}_n +\Gforce_n(\cdot,u)\big] \, \dd {w}_t^n, \\
u&=0\  \ \text{ on }\partial\Dom,\\
\div \,u& =0,
\\
u(\cdot,0)&=u_0,&
\end{aligned}\right.
\end{equation}
where $u=(u^k)_{k=1}^d:[0,\infty)\times \O\times \Dom\to \R^d$ is unknown velocity field, $p,\wt{p}_n:[0,\infty)\times \O\times \Dom\to \R$ are the unknown pressures, $(w^n)_{n\geq 1}$ is a sequence of standard independent Brownian motions on a filtered probability space $(\O,\MeasurableP,(\F_t)_{t\geq 0}, \P)$, $(u\otimes u)_{i,j} =u^{i}u^{j}$ and $\div(a\cdot \nabla u)$, $(b_n\cdot \nabla u)$, $\div(f(\cdot,u))$, $\partial_h \wt{p}$, $\partial_{\hp}^2 \wt{p}$ are as in \eqref{eq:def_main_operators_Navier_Stokes_generalized}. Here $\Dom$ can be either an open set in  $\R^d$ (not necessarily bounded) or a compact manifold without boundaries  (e.g.\ the torus $\T^d$). In the latter case the boundary conditions in \eqref{eq:Navier_Stokes_appendix} have to be omitted.

If $\Dom\subseteq\R^d$, then under suitable regularity assumptions $\Dom$ other boundary conditions can be considered,   e.g.\ perfect-slip or Navier boundary conditions (see Remark \ref{r:other_boundary_conditions} below). For brevity, this will not be pursued here. Let us remark that the no-slip condition allows us to avoid any further regularity assumptions on $\Dom$.

\subsection{Assumptions and main result}
We begin by listing the assumption needed in this appendix.

\begin{assumption} Let $d\geq 2$ be an integer.
\label{ass:NS_2D}
\begin{enumerate}[{\rm(1)}]
\item\label{it:NS_2D_ellipticity} For all $n\geq 1$ and $i,j\in \{1,\dots,d\}$, the maps $a^{i,j},b_n^j,h^{j,k}_n,  \hp^j_n:\R_+\times\O\times\Dom\to \R$ are $\Progress\otimes \Borel(\Dom\times \R^d)$-measurable and there exist $M,\ellip>0$ such that, a.s.\ for all $t\in \R_+$, $x\in \Dom$ and $\Upsilon=(\Upsilon_i)_{i=1}^d\in \R^d$,
\begin{align*}
|a^{i,j}(t,x)|+ \sum_{n\geq 1}\Big( |b^j_n(t,x)|^2+|h^{i,j}_n(t,x)|^2 +|\hp^{j}_n(t,x)|^2\Big)&\leq M,\\
\sum_{i,j=1}^d\Big[a^{i,j}(t,x)
-\frac{1}{2}\Big(
\sum_{n\geq 1}\btwod_n^i(t,x) \btwod_n^j(t,x)\Big)\Big]\Upsilon_i\Upsilon_j &\geq \ellip |\Upsilon|^2.
\end{align*}
\item\label{it:NS_2D_growth} For all $j\in \{0,\dots,d\}$ and $n\geq 1$, $\Fd_j,\Gforce_n:\R_+\times\O\times \Dom\times \R^d\to \R^d$ are $\Progress\otimes \Borel(\Dom\times \R^d)$-measurable. Moreover, $\Fd_j(\cdot,0)\in L^{2}_{\loc}([0,\infty);L^2( \O\times \Dom;\R^d))$, $(\Gforce_n(\cdot,0))_{n\geq 1}\in L^{2}_{\loc}([0,\infty);L^2(\O\times \Dom;\ell^2))$ and, a.s.\ for all $t\in \R_+$, $x\in \Tor^d$, $y,y'\in \R^d$,
\begin{align*}
|\Fd_j(t,x,y)-\Fd_j(t,x,y')|
&+\|(\Gforce_n(t,x,y)-\Gforce_n(t,x,y'))_{n\geq 1}\|_{\ell^2}\\
&\lesssim (c_{\Dom}+|y|+|y'|)|y-y'|,
\end{align*}
where $c_{\Dom}=0$ if $\Dom$ is unbounded and $c_{\Dom}=1$ otherwise.
\end{enumerate}
\end{assumption}

To formulate our main result we introduce the needed function spaces.  Set
\begin{equation*}
H^1_0(\Dom):=\overline{C^{\infty}_0(\Dom)}^{H^1(\Dom)}, \quad \text{ and }\quad
H^1_0(\Dom;\R^d):=(H^1_0(\Dom))^d.
\end{equation*}
Next we introduce the Helmholtz projection. For each $F\in L^2(\Dom;\R^d)$, let $\psi_{F}$ be the (up to a constant) unique distribution such that $\psi_F \in L^2_{\loc}(\Dom)$, $\nabla\psi_{F}\in L^2(\Dom;\R^d)$ and
\begin{equation}
\label{eq:variational_solution_helmholtz_weak}
\int_{\Dom}\nabla \psi_{F} \cdot \nabla \varphi \,\dd x=\int_{\Dom} F\cdot \nabla\varphi \,\dd x \ \ \text{ for all }\varphi \in L^2_{\loc}(\Dom) \text{ s.t.\ }\nabla \varphi\in L^2(\Dom;\R^d).
\end{equation}
The existence of a solution $\psi_F\in L^2_{\loc}(\Dom)$ to \eqref{eq:variational_solution_helmholtz_weak} with $\|\nabla \psi_F\|_{L^2(\Dom;\R^d)}\leq \|F\|_{L^2(\Dom;\R^d)}$ follows from the Riesz representation theorem. In case $\Dom$ is a bounded Lipschitz domain, then one can also obtain $\psi_F\in L^2(\Dom)$. Note that $\nabla\psi_F\in L^2(\Dom;\R^d)$ is uniquely determined by $F$, and that \eqref{eq:variational_solution_helmholtz_weak} is the weak formulation of
\begin{equation}
\label{eq:Helmholtz_projection_variational}
\left\{\begin{aligned}
\Delta \psi_{F} &=\div F & \ \text{ on }&   \Dom,\\
\partial_{\nu} \psi_{F} & =F\cdot \nu  & \ \text{ on } & \partial\Dom.
\end{aligned}\right.
\end{equation}
The Helmholtz projection is given by
$
\p F:= F- \nabla \psi_{F}.
$
Using \eqref{eq:variational_solution_helmholtz_weak}, one sees that $\p$ is an orthonormal projection on $L^2(\Dom;\R^d)$. Finally, we let
$$
\Ls^2(\Dom):=\p(L^2(\Dom;\R^d)), \ \  \
\Hs_0^1(\Dom):=H_0^1(\Dom;\R^d)\cap \Ls^2(\Dom),
\ \ \
 \Hs^{-1}(\Dom):=(\Hs_0^1(\Dom))^*.
$$
Next we introduce $L^2$-solutions to \eqref{eq:Navier_Stokes_generalized} which, roughly speaking, are \emph{strong} in the probabilistic sense and \emph{weak} in the analytic sense. As in Subsection \ref{ss:main_assumption_definition}, we view \eqref{eq:Navier_Stokes_appendix} as a problem for the unknown $u$. To this end, for $h^{j,k}_n,\hp^j_n$ as in Assumption \ref{ass:NS_2D}\eqref{it:NS_2D_ellipticity}, let us set $
\wt{f}(u):= (\wt{f}^{\,k}(u))_{k=1}^d$ where
\begin{align}
\label{eq:def_ftilde_L2}
\wt{f}^{\,k}(u)
&=
\sum_{n\geq 1} \Big( \Big[(I-\p)\big[(b_n \cdot \nabla )u +\Gforce_n(\cdot,u)\big]\Big]\cdot h^{\cdot,k}_n\\
\nonumber
& + \sum_{n\geq 1}\div \Big(\hp_{n}(I-\p)\big[(b_n \cdot \nabla )u +\Gforce_n(\cdot,u)\big]^k\Big).
\end{align}
As below \eqref{eq:wtfbg}, we have $
\wt{f}(\cdot,u) = \partial_h \wt{p}+\partial_{\hp}^2 \wt{p}$.

\begin{definition}
\label{def:L_2_solutions}
Let Assumption \ref{ass:NS_2D} be satisfied.
\begin{enumerate}[{\rm(1)}]
\item Let $\sigma$ be a stopping time and let $u:[0,\sigma)\times \O\to \Hs_0^{1}(\Dom)$ be a stochastic process. We say that $(u,\sigma)$ is an \emph{$L^2$-local solution} to \eqref{eq:Navier_Stokes_appendix} if there exists a sequence of stopping times $(\sigma_{\ell})_{\ell\geq 1}$ for which the following hold:
\begin{itemize}
\item $\sigma_\ell\leq \sigma$ a.s.\ for all $\ell\geq 1$ and $\lim_{\ell\to \infty} \sigma_{\ell}=\sigma$ a.s.;
\item For all $\ell\geq 1$, the process $
\one_{[0,\sigma_{\ell}]\times \O}u
$ is progressively measurable;
\item $u\in L^2(0,\sigma_{\ell};\Hs_0^{1}(\Dom))\cap C([0,\sigma_{\ell}];\Ls^2(\Dom))$ a.s.\ and for all $i,j\in \{1,\dots,d\}$
\begin{equation}
\label{eq:integrability_condition_L_2}
\begin{aligned}
f_0^i(u) +f^i_j(u)+\wt{f}^{\,i}(u) - u^i u^j\in L^2(0,\sigma_{\ell};L^{2}(\Dom;\R^d))\text{ a.s.,}&\\
\big((\Gforce_{n}^j(\cdot,u)\big)_{n\geq 1} \in  L^2(0,\sigma_{\ell};L^2(\Dom;\ell^2))\text{ a.s.\ }&
\end{aligned}
\end{equation}
\item a.s.\ for all $n\geq 1$, $t\in [0,\sigma_{\ell}]$ and $\psi\in \Hs_0^{1}(\Dom)$ the following identity holds
\begin{equation}
\label{eq:integral_equation_Navier_Stokes_weak}
\begin{aligned}
&\int_{\Dom} (u^k(t)-u^k_{0}) \psi^k\,\dd x
+ \int_0^t \int_{\Dom} a^{i,j} \partial_j u^k \partial_i \psi^k\,\dd x\, \dd s\\
&= \int_0^t \int_{\Dom} (u^j u^k +f^k_j(u))\partial_j \psi^k \,\dd x \,\dd s
+ \int_0^t \int_{\Dom} (f_0^k(u) +\wt{f}^{\,k}(u))\psi^k \,\dd x\,\dd s \\
&+\sum_{n\geq 1}
\int_0^t \one_{[0,\sigma_n]}\int_{\Dom}  \Big(\btwod_n^j \partial_j u^k  +g_{n}^k(\cdot,u)\Big)\psi^k\, \dd x \,\dd w^n_s,
\end{aligned}
\end{equation}
where we used the Einstein summation convention for the sums over $i,j,k\in \{1, \ldots, d\}$.
\end{itemize}
\item An $L^2$-local solution $(u,\sigma)$ to \eqref{eq:Navier_Stokes_appendix} is called an \emph{$L^2$-maximal solution} if for any other local solution $(v,\tau)$ to \eqref{eq:Navier_Stokes_appendix} one has $\tau\leq \sigma$ a.s.\ and $v=u$ on $[0,\tau)\times \O$.
\end{enumerate}
\end{definition}

Note that $L^2$-maximal solutions are \emph{unique}. Moreover, \eqref{eq:integrability_condition_L_2} ensures that all the integrals and series appearing in \eqref{eq:integral_equation_Navier_Stokes_weak} are well-defined and convergent.

Next we present the main result of this appendix.
\begin{theorem}[Global solutions in 2D]
\label{t:2D_global_rough_noise}
Assume that $d=2$ and that  $\Dom\subseteq \R^2$ is an open set, or a compact $d$-dimensional manifold without boundary (e.g.\ the torus $\T^d$).
Let Assumption \ref{ass:NS_2D} be satisfied and that for all $n\geq 1 $, $ j\in \{1,\dots,d\}$
\begin{equation}
\label{eq:theta_equal_b}
 \hp^j_n=\alpha_n b_n^j \ \  \text{ for some } \ \alpha_n\in [- \tfrac{1}{2},\infty) .
\end{equation}
Then for each
$
u_0\in L^0_{\F_0}(\O;\Ls^2(\Dom))
$
there exists an $L^2$-maximal solution $(u,\sigma)$ to \eqref{eq:Navier_Stokes_appendix} such that $\sigma>0$ a.s.\ and
\begin{equation}
\label{eq:L_2_regularity_up_to_0}
u\in C([0,\sigma);\Ls^2(\Dom))\cap L^2_{\loc}([0,\sigma);\Hs_0^{1}(\Dom))\ \ \ \text{a.s. }
\end{equation}
Moreover, if there exist $\Xi\in L^2_{\loc}([0,\infty);L^2(\O\times \Dom))$ and $C>0$ such that, a.s.\ for all $j\in \{0,1,2\}$, $t\in \R_+$, $x\in \Dom$ and $y\in\R$,
\begin{equation}
\label{eq:NS_2D_sublinearity}
|\Fd_j(t,x,y)|+
\|(\Gforce_n(t,x,y))_{n\geq 1}\|_{\ell^2}\leq \Xi (t,x)+C |y|,
\end{equation}
then the following hold:
\begin{enumerate}[{\rm(1)}]
\item\label{it:NS_2D_global_weak_solution} The $L^2$-maximal solution $(u,\sigma)$ to \eqref{eq:Navier_Stokes_appendix} is \emph{global} in time, i.e.\ $\sigma=\infty$ a.s.
\item\label{it:NS_2D_global_weak_solution_estimate} If $u_0\in L^2(\O;\Ls^2(\Dom))$, then for each $T\in (0,\infty)$ there exists $C_T>0$ independent of $u,u_0$ such that
$$
\E\Big[\sup_{t\in [0,T]}\|u(t)\|_{L^2(\Dom)}^2\Big] +
\E\int_0^T \|\nabla u(t)\|_{L^2(\Dom)}^2 \,\dd t
\leq C_T(1+ \E\|u_0\|_{L^2(\Dom)}^2).
$$
\end{enumerate}
\end{theorem}

The proof of Theorem \ref{t:2D_global_rough_noise} will be given in Subsection \ref{ss:proof_theorem_appendix} below.
The case $\alpha_n\equiv -\frac{1}{2}$ of \eqref{eq:theta_equal_b} corresponds to the Stratonovich formulation of transport noise, see the comments around \eqref{eq:stratonovich_correction}. The proof of the above result shows that the range $\alpha_n\in [ -\frac{1}{2},\infty)$ can be improved to $\alpha_n \in (\alpha_0,\infty)$ for some $\alpha_0<- \frac{1}{2}$ depending only on $(\ellip,M)$ in Assumption \ref{ass:NS_2D}\eqref{it:NS_2D_ellipticity}. We leave the details to the reader.

Note that under further regularity conditions on the coefficients $a^{ij}$, $b^j$ and $h^{ij}$, the above result can be improved and this is the content of Theorem \ref{t:NS_initial_data_large_two_dimensional} combined with Theorems \ref{t:NS_critical_local} and \ref{t:NS_high_order_regularity}.
In the following remark we give a comparison with the existing results.

\begin{remark}
\label{r:comparison_2D_other_results}
The existence of $L^2$-maximal solutions to \eqref{eq:Navier_Stokes_appendix} with nonlinearities of quadratic growth (see Assumption \ref{ass:NS_2D}\eqref{it:NS_2D_growth}) seems new. \eqref{it:NS_2D_global_weak_solution_estimate}  should be compared with \cite[Theorem 2.2]{MR05} if $\Dom=\R^d$ and \cite[Section 6 and Corollary 7.7]{BM13_unbounded} if $\Dom$ is an unbounded domain. Some earlier related results appeared in \cite{BCF91,BCF92} using some smoothness of $b$.
Unlike in the above references we do not need any regularity assumption on $b$ besides boundedness and measurability.
The regularity assumption on $b$ are needed in \cite{MR05,BM13_unbounded} to construct martingale solutions to \eqref{eq:Navier_Stokes_appendix} for general $d\geq  2$. The case $d=2$ is only considered afterwards.

In case $f$ and $g$ are globally Lipschitz, Theorem \ref{t:2D_global_rough_noise} also follows from the monotone operator approach (see e.g.\ \cite[Chapter 4]{LR15}). The extensions to the non-Lipschitz setting in \cite[Chapter 5]{LR15} are not applicable since transport noise is not considered there.
\end{remark}

Below, we write $L^2$, $\Hs_0^1$, $L^2(\ell^2)$ instead of $L^2(\Dom)$, $\Hs_0^1(\Dom)$, $L^2(\Dom;\ell^2)$ etc.\ for brevity.

\subsection{Maximal $L^2$-regularity for the turbulent Stokes system}
In this subsection we consider the linear part of \eqref{eq:Navier_Stokes_appendix} and prove stochastic maximal $L^2$-regularity estimates which will be used for Theorem \ref{t:2D_global_rough_noise}. We consider the \emph{turbulent Stokes system} on $\Dom\subseteq \R^d$:
\begin{equation}
\label{eq:turbulent_stokes_couple_appendix}
\left\{
\begin{aligned}
\dd u +A_{\Stok}u \,\dd t &=f\, \dd t +\sum_{n\geq 1} (B_{\Stok,n} u+ g_n )\, \dd w_t^n, \\
u(s)& =0.
\end{aligned}\right.
\end{equation}
Here $s\in [0,\infty)$, $(A_{\Stok},B_{\Stok})=(A_{\Stok},(B_{\Stok,n})_{n\geq 1}):\R_+\times \O\to \calL\big(\Hs_0^{1},
\Hs^{-1}\times \Ls^2(\ell^2)\big)$ is the \emph{turbulent Stokes couple}, which for all $v\in \Hs_0^1$ and $\psi\in \Hs^{1}_0$, is given by
\begin{align*}
\l \psi, A_{\Stok} v \r&:=  \sum_{i,j=1}^d\int_{\Dom} a^{i,j} \partial_j v^k \partial_i \psi^k\,\dd x
+\sum_{n\geq 1}\sum_{j=1}^d \int_{\Dom} \hp^j_n \big((I-\p)[(b_n\cdot\nabla) v]\big)^k \partial_j \psi^k\,\dd x 
\\
&\qquad - \sum_{n\geq 1}
\sum_{j=1}^d \int_{\Dom} \Big[(I-\p)\big[(b_n \cdot \nabla )v ]\Big] \cdot h^{\cdot,j}_n \psi^j \,\dd x,\\
B_{\Stok}(t) v&:=\big(\p[(b_n\cdot \nabla ) v]\big)_{n\geq 1}, \qquad \qquad\qquad h^{\cdot,j}_n :=(h^{i,j}_n)_{i=1}^d,
\end{align*}
where $\l \cdot,\cdot\r$ is the pairing in the duality $(\Hs_0^{1})^*=\Hs^{-1}$ and $\Ls^2(\ell^2):=\p(L^2(\Dom;\ell^2(\N_{\geq 1};\R^d)))$. Note that, if Assumption \ref{ass:NS_2D}\eqref{it:NS_2D_ellipticity} holds, then
$$
\|A_{\Stok}v\|_{\Hs^{-1}}
+
\|(B_{\Stok,n} v)_{n\geq 1}\|_{\Ls^2(\ell^2)} \lesssim M \|v\|_{\Hs^1_0}.
$$
Recall that $\Progress$ denotes the progressive $\sigma$-algebra.
For a stopping time $\tau$, we say that $u\in L^2_{\Progress}(( 0,\tau)\times \O;\Hs_0^1)$ is a \emph{strong solution} to  \eqref{eq:turbulent_stokes_couple_appendix} (on $[0,\tau]\times \O$) if a.s.\ for all $t\in [s,\tau)$
$$
u(t)+\int_s^{t} A_{\Stok}(r)u(r)\,\dd r=\int_{s}^t f(r)\,\dd r
+\sum_{n\geq 1}\int_{s}^t (B_{\Stok,n}(r)u(r)+g_n(r))\, \dd w_r^n.
$$

Next we show that the turbulent Stokes couple $(A_{\Stok},B_{\Stok})$ has stochastic maximal $L^2$-regularity on $[s,T]$.
\begin{proposition}[Maximal $L^2$-regularity]
\label{prop:maximal_L_2_regularity}
Let $d\geq 2$. Let Assumption \ref{ass:NS_2D}\eqref{it:NS_2D_ellipticity} be satisfied.
Assume that  $\Dom\subseteq \R^d$ is an open set, or a compact $d$-dimensional manifold without boundary (e.g.\ the torus $\T^d$).
Let $0\leq s<T<\infty$. Then for all
$$
f\in L^2_{\Progress}(( s,T)\times \O;\Hs^{-1}), \quad \text{ and }\quad
g\in L^2_{\Progress}(( s,T)\times \O;\Ls^2(\ell^2)),
$$
there exists a unique strong solution $u\in L^2_{\Progress}(( s,T)\times \O;\Hs_0^1)\cap L^2(\O;C([s,T];\Ls^2))$. Moreover, for any stopping time $\tau:\O\to [s,T]$ and any strong solution $u$ to \eqref{eq:turbulent_stokes_couple_appendix} on
$[ s,\tau]\times \O$ one has
\begin{equation*}
\|u\|_{L^2(( s,\tau)\times \O;\Hs_0^1)}+\|u\|_{L^2(\O;C([s,\tau];\Ls^2))}\lesssim
\|f\|_{ L^2(( s,\tau)\times \O ;\Hs^{-1})}+
\| g\|_{ L^2((s,\tau)\times \O;\Ls^2(\ell^2))}
\end{equation*}
where the implicit constant is independent of $(f,g)$.
\end{proposition}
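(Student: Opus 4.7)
The plan is to view \eqref{eq:turbulent_stokes_couple_appendix} as a linear stochastic evolution equation in the Gelfand triple $\Hs_0^1 \hookrightarrow \Ls^2 \hookrightarrow \Hs^{-1}$ and apply the classical variational theory of Pardoux/Krylov–Rozovskii (see e.g.\ Liu–R\"ockner or Pr\'evot–R\"ockner). Under Assumption \ref{ass:NS_2D}\eqref{it:NS_2D_ellipticity}, the maps $A_{\Stok}\colon \Hs_0^1\to \Hs^{-1}$ and $B_{\Stok}\colon \Hs_0^1\to \Ls^2(\ell^2)$ are progressively measurable and uniformly bounded, so all that is required from the general theory is stochastic parabolicity (coercivity), after which existence, uniqueness, $\Ls^2$-continuity of paths, and the claimed a priori bound on $\ll s,\tau\rr$ follow in a standard way.

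The key analytic step is the coercivity estimate: for a.e.\ $(t,\omega)$ and every $v\in \Hs_0^1$,
\begin{equation*}
2\,\langle v, A_{\Stok}(t)v\rangle_{\Hs^{-1},\Hs_0^1} - \sum_{n\geq 1}\|B_{\Stok,n}(t)v\|_{\Ls^2}^2 \;\geq\; \nu\,\|\nabla v\|_{L^2}^2 - C\,\|v\|_{L^2}^2.
\end{equation*}
To verify it I expand the left-hand side. Since $\p$ is the orthogonal projection in $L^2$, $\|\p X\|^2 = \|X\|^2 - \|(I-\p)X\|^2$, which turns
$\sum_n\|\p[(b_n\!\cdot\!\nabla)v]\|^2$ into
$\sum_n\|(b_n\!\cdot\!\nabla)v\|^2 - \sum_n\|(I-\p)[(b_n\!\cdot\!\nabla)v]\|^2$,
and the first piece produces exactly $\int\sum_{i,j}(\sum_n b_n^i b_n^j)\,\partial_i v^k\,\partial_j v^k\,dx$. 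Combining with the $a^{ij}$-term from $\langle v,A_{\Stok}v\rangle$ gives $\int\sum_{i,j,k}[2a^{ij}-\sum_n b_n^ib_n^j]\,\partial_i v^k\partial_j v^k \geq 2\ellip\|\nabla v\|^2$ by stochastic parabolicity. What remains is
\begin{equation*}
\sum_{n}\|Z_n\|_{L^2}^2 - 2\sum_{n}\int_\Dom Z_n\cdot H_n\,dx, \qquad Z_n := (I-\p)[(b_n\!\cdot\!\nabla)v],\quad H_n^i := \sum_k h_n^{i,k} v^k,
\end{equation*}
which by Young's inequality is bounded below by $-\sum_n\|H_n\|_{L^2}^2$. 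Using $\sum_n|h_n^{i,j}|\leq M$ pointwise, one has $\sum_n|h_n^{i,j}|^2\leq M^2$, whence $\sum_n\|H_n\|_{L^2}^2 \leq C_{d,M}\,\|v\|_{L^2}^2$, which is absorbed into the $-C\|v\|^2$ term.

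With coercivity established, existence and uniqueness of a strong solution $u\in L^2_{\Progress}(\ll s,T\rr;\Hs_0^1)\cap L^2(\Omega;C([s,T];\Ls^2))$ for any $f\in L^2_{\Progress}(\ll s,T\rr;\Hs^{-1})$ and $g\in L^2_{\Progress}(\ll s,T\rr;\Ls^2(\ell^2))$ follow from the standard variational theorem. For the estimate on $\ll s,\tau\rr$ with an arbitrary stopping time $\tau\leq T$, I apply the variational It\^o formula to $\|u\|_{L^2}^2$: coercivity plus Young's inequality on the pairings with $f$ and $g$ yields
\begin{equation*}
\|u(t\wedge\tau)\|_{L^2}^2 + \ellip\!\int_s^{t\wedge\tau}\!\|\nabla u\|_{L^2}^2\,dr \;\leq\; C\!\int_s^{t}\|u(r\wedge\tau)\|_{L^2}^2 dr + C\,\bigl(\|f\|_{L^2(\ll s,\tau\rr;\Hs^{-1})}^2 + \|g\|_{L^2(\ll s,\tau\rr;\Ls^2(\ell^2))}^2\bigr) + M_t,
\end{equation*}
where $M_t$ is the local martingale arising from the stochastic integral; taking expectations and Gronwall give the $L^2(s,\tau;\Hs_0^1)$ estimate, and a further BDG inequality on $M_t$ gives the $L^2(\Omega;C([s,\tau];\Ls^2))$ bound.

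The only nontrivial step is the coercivity computation: I expect the main obstacle to be keeping track of how the nonlocal term in $A_{\Stok}$ coming from $h$ interacts with the It\^o correction arising from $B_{\Stok}$ acting on divergence-free fields. The cancellation works because the design of the operators $A_{\Stok}$ and $B_{\Stok}$ (which encode the turbulent pressure $\widetilde p_n$ and its projection onto $\partial_h\widetilde p$) is precisely tailored so that the $(I-\p)$-contribution in $A_{\Stok}$ produces a nonnegative square $\sum_n\|Z_n\|^2$ that dominates the cross-terms with $h$ up to an $L^2$-remainder.
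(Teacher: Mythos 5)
Your proof is correct and follows essentially the same strategy as the paper: both invoke the variational/monotone-operator framework (Liu--R\"ockner, Theorem 4.2.4) and reduce everything to the coercivity estimate. The only difference lies in the coercivity computation: you use the exact orthogonal decomposition $\|\p X\|_{L^2}^2 = \|X\|_{L^2}^2 - \|(I-\p)X\|_{L^2}^2$, which leaves you a free negative term $-\sum_n\|Z_n\|_{L^2}^2$ that absorbs the $h$-cross-term by completing the square, so that the full ellipticity constant $2\ellip$ survives; the paper instead simply bounds $\sum_n\|\p[(b_n\cdot\nabla)v]\|_{L^2}^2 \leq \sum_n\|(b_n\cdot\nabla)v\|_{L^2}^2$ (discarding the $\|Z_n\|^2$ gain), then controls the $h$-cross-term by Cauchy--Schwarz as $CM^2\|\nabla v\|_{L^2}\|v\|_{L^2}$ and absorbs it via Young's inequality, borrowing $\tfrac{\ellip}{2}\|\nabla v\|_{L^2}^2$. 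Your version is marginally sharper and highlights that the $(I-\p)$-term in $A_{\Stok}$ is structurally paired with the It\^o correction; the paper's version is shorter and makes it clear that no fine cancellation is actually needed since $h$ is merely bounded. Both lead to the same conclusion.
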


\begin{proof}
This follows from the monotone operator approach to SPDEs (see \cite[Theorem 4.2.4]{LR15}). Indeed, it suffices to check the coercivity assumption. To this end, note that, a.s.\ for all $t\in \R_+$ and $v\in \Hs^1_0$,
\begin{align*}
 \int_{\Dom} \hp^j_n \big((I-\p)[(b_n\cdot\nabla) v]\big)^k \partial_j v^k\,\dd x
\stackrel{\eqref{eq:theta_equal_b}}{=}\alpha_n   \big\|(I-\p) [(b_n\cdot\nabla) v]\big\|_{L^2}^2.
\end{align*}
Since $\inf_{n\geq 1}\alpha_n\geq - \frac{1}{2}$, we have, a.s.\ for all $t\in \R_+$ and $v\in \Hs^1_0$,
\begin{align*}
&-\l v, A_{\Stok} v \r + \frac{1}{2}
\sum_{n\geq 1}\|B_{\Stok,n}v\|_{L^2}^2 \\
&\stackrel{(i)}{\leq} -\sum_{i,j=1}^d\int_{\Dom} a^{i,j}\partial_i v\cdot \partial_j v \, \dd x
+\frac{1}{2} \sum_{n\geq 1}\|(I-\p) [(b_n\cdot\nabla) v]\|_{L^2}^2\\
&+C\|h\|_{L^{\infty}(\ell^2)}\|\nabla v\|_{L^2}\|v\|_{L^2}+
\frac{1}{2}\sum_{n\geq 1}\| \p (b_n \cdot \nabla )v\|_{L^2}^2
\\
&\stackrel{(ii)}{\leq }-\ellip \|\nabla v\|_{L^2}^2 + \frac{\ellip}{2}  \|\nabla v\|_{L^2}^2 + C_{\ellip} M\|v\|_{L^2}^2
=- \frac{\ellip}{2}  \|\nabla v\|_{L^2}^2 + C_{\ellip} M\|v\|_{L^2}^2
\end{align*}
where in $(i)$ we used that $\p$ is an orthogonal projection and in $(ii)$ we used Assumption \ref{ass:NS_2D}\eqref{it:NS_2D_ellipticity}.
Hence the coercivity assumption in \cite[Theorem 4.2.4]{LR15} is satisfied and this concludes the proof.
\end{proof}

We conclude by pointing out another proof of the above result.
\begin{remark}
\label{r:SMR_2_appendix}
Instead of the monotone operator approach one can also prove Proposition \ref{prop:maximal_L_2_regularity} via the method of continuity for stochastic maximal regularity (see \cite[Proposition 3.13]{AV19_QSEE_2}). Indeed, one can reason as in the proof of Theorem \ref{t:maximal_Stokes_Tord} and the required a priori estimate can be obtained from It\^o's formula for $u\mapsto \|u\|_{L^2}^2$.
\end{remark}

\subsection{Proof of Theorem \ref{t:2D_global_rough_noise}}
\label{ss:proof_theorem_appendix}
To estimate the nonlinearities in \eqref{eq:Navier_Stokes_appendix} we need the following consequence of Sobolev embedding. Here $[\cdot,\cdot]_{\theta}$ denotes the complex interpolation functor, see e.g.\ \cite{BeLo}.

\begin{lemma}[Sobolev embeddings]
\label{l:Sob_emb}
Let $d\geq 2$.
Assume that  $\Dom\subseteq \R^d$ is an open set, or a compact $d$-dimensional manifold without boundary (e.g.\ the torus $\T^d$).
Let $\theta\in (0,1)$ and $q\in[2,\infty)$ be such that $\theta-\frac{d}{2}\geq -\frac{d}{q}$. Then
$$
[\Hs^{-1}(\Dom),\Hs_0^1(\Dom)]_{\frac{1+ \theta}{2}}\embed L^q(\Dom).
$$
\end{lemma}

\begin{proof}
From general theory on coercive forms (see \cite[Section 5.5.2]{ArendtHandbook}) we obtain $[\Hs^{-1},\Hs_0^1]_{1/2}=\Ls^2$.
Therefore, by reiteration for complex interpolation (see \cite[Theorem 4.6.1]{BeLo})
\begin{align*}
[\Hs^{-1},\Hs_0^1]_{(1+\theta)/2}=[\Ls^2,\Hs_0^1]_{\theta} \hookrightarrow  [L^2,H^1_0]_{\theta}.
\end{align*}
Using the boundedness of the zero-extension operator ${\rm E}_0:L^2(\Dom)\to L^2(\R^d)$ and ${\rm E}_0:H^1_0(\Dom)\to H^1(\R^d)$, and Sobolev embedding (see \cite[Theorem 6.4.5]{BeLo}),
$$
\|f\|_{L^q} \leq \|{\rm E}_{0}f\|_{L^q(\R^d)}\lesssim  \|{\rm E}_{0} f\|_{[L^2(\R^d);H^1(\R^d)]_{\theta}}\leq
\|f\|_{[L^2;H^1_0]_{\theta}}
\leq \|f\|_{[\Ls^2,\Hs^1_0]_{\theta}}.
$$
\end{proof}

\begin{proof}[Proof of Theorem \ref{t:2D_global_rough_noise}]
The proof is divided into several steps.

\emph{Step 1: There exists an $L^2$-maximal solutions $(u,\sigma)$ to \eqref{eq:Navier_Stokes_appendix} such that $\sigma>0$ a.s.\ and that \eqref{eq:L_2_regularity_up_to_0} holds}. By Definition \ref{def:L_2_solutions} the claim of this step follows from \cite[Theorem 4.9]{AV19_QSEE_1}  with the choice $p=2$, $\a=0$, $X_0=\Hs^{-1}$, $X_1=\Hs_0^{1}$, $H=\ell^2$ and for $v\in \Hs_0^1$, $\psi\in \Hs_0^1$,
\begin{equation}
\label{eq:choice_ABFG_L_2}
\begin{aligned}
A v=A_{\Stok} v, \qquad B v=\B_{\Stok} v,\qquad G(v)=\big(\p [\Gforce_n(\cdot,v)]\big)_{n\geq 1},&\\
\l \psi, F(\cdot,v)\r= \int_{\Dom} (u^j u^i +f^i_j(u))\partial_j \psi^i \, \dd x\, \dd s
+ \int_{\Dom} (f_0^i(u) +\wt{f}^{\,i }(u))\psi^i \,\dd x\, \dd s,&
\end{aligned}
\end{equation}
where $A_{\Stok},B_{\Stok}$ and $\wt{f}$ are as below \eqref{eq:turbulent_stokes_couple_appendix} and in \eqref{eq:def_ftilde_L2}, respectively. Indeed, it remains to check the assumptions of \cite[Theorem 4.9]{AV19_QSEE_1}. The stochastic maximal $L^2$-regularity follows from Proposition \ref{prop:maximal_L_2_regularity}. Next we check hypothesis \cite[(HF)-(HG)]{AV19_QSEE_1}.
To this end, note that by Assumption \ref{ass:NS_2D}\eqref{it:NS_2D_growth} we have, for all $v,v'\in \Hs_0^1$,
\begin{equation}
\begin{aligned}
\label{eq:NS_2D_L_4_estimates}
&\|F(\cdot,v)-F(\cdot,v')\|_{\Hs^{-1}}
+ \|G(\cdot,v)-G(\cdot,v')\|_{\Ls^2(\ell^2)}\\
&\qquad
\lesssim (c_{\Dom}+\|v\|_{L^4}+\|v'\|_{L^4})\|v-v'\|_{L^4}\\
&\qquad
\lesssim (c_{\Dom}+\|v\|_{[\Hs^{-1},\Hs_0^1]_{\frac{3}{4}}}+\|v'\|_{[\Hs^{-1},\Hs_0^1]_{\frac{3}{4}}})
\|v-v'\|_{[\Hs^{-1},\Hs_0^1]_{\frac{3}{4}}}
\end{aligned}
\end{equation}
where in the last step we used Lemma \ref{l:Sob_emb} with $d=2$.
Thus \cite[(HF)-(HG)]{AV19_QSEE_1} are satisfied with $m_G=m_F=1$, $\rho_j=1$, $\beta_j=\varphi_j =\frac{3}{4}$ where $j\in \{1,2\}$. Hence the claim of this step follows from \cite[Theorem 4.9]{AV19_QSEE_1}.


It remains to prove \eqref{it:NS_2D_global_weak_solution}-\eqref{it:NS_2D_global_weak_solution_estimate} in Theorem \ref{t:2D_global_rough_noise} in case \eqref{eq:NS_2D_sublinearity} holds. To this end, it is enough to show that if $(u,\sigma)$ is an $L^2$-maximal solution to \eqref{eq:Navier_Stokes_appendix} and $u_0\in L^2(\O;L^2)$, then for each $T\in (0,\infty)$
\begin{equation}
\label{eq:energy_estimate_L_2}
\E\Big[\sup_{t\in [0,T\wedge\sigma)}\|u(t)\|_{L^2}^2\Big] +
\E\int_0^{T\wedge\sigma} \|\nabla u(t)\|_{L^2}^2 \,\dd t
\lesssim_T 1+\E\|u_0\|_{L^2}^2.
\end{equation}
Indeed, if \eqref{eq:energy_estimate_L_2} is true, then in Step 2 we prove that \eqref{it:NS_2D_global_weak_solution} holds and therefore \eqref{it:NS_2D_global_weak_solution_estimate} follows from \eqref{it:NS_2D_global_weak_solution} and \eqref{eq:energy_estimate_L_2}.

\emph{Step 2: If \eqref{eq:energy_estimate_L_2} holds, then \eqref{it:NS_2D_global_weak_solution} is true}.
Recall that \eqref{eq:Navier_Stokes_generalized} is understood as a stochastic evolution equations on $\Hs^{-1}$ with the choice  \eqref{eq:choice_ABFG_L_2}. Thus by localization we may assume $u_0\in L^{\infty}(\O;\Ls^2)$ (see \cite[Proposition 4.13]{AV19_QSEE_2}). Hence, for all $0<\varepsilon<T<\infty$,
$$
\P(\varepsilon<\sigma<T)\stackrel{\eqref{eq:energy_estimate_L_2}}{=}
\P\Big(\varepsilon<\sigma<T,\,\sup_{t\in [0,\sigma)} \|u(t)\|_{\Ls^2}+\|u\|_{L^2(0,\sigma;\Hs_0^{1})}<\infty\Big)
=0
$$
where in the last equality we used \cite[Theorem 4.10(3)]{AV19_QSEE_2}.
Since $0<\varepsilon<T<\infty$ are arbitrary and $\sigma>0 $ a.s.\ by Step 1, the previous yields $\sigma=\infty$ as desired.

To prove \eqref{eq:energy_estimate_L_2}, we need a localization argument. Fix $T\in (0,\infty)$.
For each $j\geq 1$, let $\sigma_j$ be the stopping time given by
\begin{equation}
\label{eq:sigma_j_energy_estimate}
\sigma_j:=\inf\big\{t\in [0,\sigma)\,:\,\|\nabla u\|_{L^2(0,t;L^2)}+\|u(t)\|_{L^2}\geq j\big\}\wedge T,
\end{equation}
where $\inf\emptyset:=\sigma$. By Step 1, \eqref{eq:L_2_regularity_up_to_0} holds and therefore $\lim_{j\to \infty} \sigma_j=\sigma$ a.s. Thus to obtain \eqref{eq:energy_estimate_L_2}, by Gronwall and Fatou's lemmas, it is enough to show the existence of a constant $C>0$ independent of $j,u,u_0$ such that
\begin{equation}
\label{eq:Gronwall_estimate_NS_2D}
\E y(t)\leq C(1+\E \|u_0\|_{L^2}^2)+C \int_0^t \E y(s)\, \dd s, \ \ t\in [0,T],
\end{equation}
where
\begin{equation}
\label{eq:def_y_NS_2D}
 y(t)=\sup_{r\in [0,t\wedge\sigma_j)}\|u(t)\|_{L^2}^2+\int_{0}^{t\wedge \sigma_j}\|\nabla u(s)\|^2_{L^2} \, \dd s.
\end{equation}
In the remaining steps $j\geq 1$ is fixed and we set $\stopp:=\sigma_j$.

\emph{Step 3: We apply It\^o's formula to obtain the identity \eqref{eq:Ito_energy_inequality} below}.
Let us begin by collecting some useful facts.
Note that $\|\nabla u\|_{L^2(0,\stopp;L^2)}+\sup_{t\in [0,\stopp)}\|u(t)\|_{L^2}\leq j$ a.s.\ by \eqref{eq:sigma_j_energy_estimate} (recall $\stopp=\sigma_j$), and $\|u\|_{L^4(0,\stopp;L^4)}\lesssim j$ a.s.\ by Lemma \ref{l:Sob_emb}  with $d=2$ and standard interpolation inequalities
\begin{equation*}
L^2(0,t;\Hs_0^1)\cap L^{\infty}(0,t;\Ls^2)\hookrightarrow L^4(0,t;[\Ls^2,\Hs_0^{1}]_{1/2})
\hookrightarrow L^4(0,t; L^4), \ \text{ for } t>0.
\end{equation*}
Note that the embedding constants in the above can be made independent of $t>0$.

Combining the previous observations with \eqref{eq:NS_2D_L_4_estimates}, we have
\begin{equation}
\label{eq:integrability_F_G_L_2}
\one_{[ 0,\stopp]\times \O}  F(\cdot,u)
\in L^2((0,T)\times \O;\Hs^{-1}),\quad
\one_{[0,\stopp]\times \O }  G(\cdot,u)
\in L^2((0,T)\times \O;\Ls^2(\ell^2)),
\end{equation}
where $F$ and $G$ are as in \eqref{eq:choice_ABFG_L_2}.
By \eqref{eq:integrability_F_G_L_2}, It\^o's formula \cite[Theorem 4.2.5]{LR15}, and Assumption \ref{ass:NS_2D}\eqref{it:NS_2D_ellipticity}, arguing as in the proof of Proposition \ref{prop:maximal_L_2_regularity}, one can check that, a.s.\ for all $t\in [0,T]$,
\begin{equation}
\begin{aligned}
\label{eq:Ito_energy_inequality}
&\|\usigma(t)\|_{L^2}^2- \|u_0\|_{L^2}^2+\frac{\ellip}{2} \int_0^{t} \one_{[0,\stopp]}\|\nabla u\|_{L^2}^2\, \dd s\\
&\leq \int_0^t \one_{[0,\stopp]} \Big[2\big|(\Fd_0(u),u)_{L^2}|+2\sum_{k=1}^2|(\Fd_k (u),\partial_k u)_{L^2}|
+\|\Gforce(u)\|_{L^2(\ell^2)}^2\Big]\, \dd s \\
&+\sum_{n\geq 1} \int_0^t \one_{[0,\stopp]} \big((\btwod_n\cdot \nabla) u, \p\Gforce_n(u)\big)_{L^2} \, \dd s \\
&+ 2\sum_{n\geq 1}\int_0^t \one_{[0,\stopp]}(\Gforce_n(u),u)_{L^2}\, \dd w^n_s
+2\sum_{n\geq 1}\int_0^t \one_{[0,\stopp]} \big((\btwod_n\cdot \nabla) u,u\big)_{L^2}\, \dd w^n_s\\
&=: I_t+II_t+III_t+IV_t
\end{aligned}
\end{equation}
where $\usigma:=u(\cdot\wedge \stopp)$,
$(f,g)_{L^2}=\sum_{k=1}^2\int_{\Dom} f^k g^k\,\dd x$ and we used the cancellation
\begin{equation}\label{eq:boundarycondition}
\sum_{i,j=1}^2 \int_{\Dom} u^i u^j \partial_{j} u^i\,\dd x=0, \ \ \text{ for all } u\in \Hs_0^{1}.
\end{equation}
The above follows from $\div(u) = 0$ and by integrations by parts and using that $u\cdot \nu=0$ on $\partial\Dom$ due to $\p u= u$ and \eqref{eq:Helmholtz_projection_variational}, where $\nu$ denotes the outer normal vector on $\partial \Dom$.

\textit{Step 4: Proof of \eqref{eq:Gronwall_estimate_NS_2D} with $C$ independent of $u,u_0,j$}. Recall that $\stopp:=\sigma_j$.
Due to \eqref{eq:NS_2D_sublinearity}, one can readily check that there exists $C>0$ independent of $j,u,u_0$ such that,
for all $t\in [0,T]$,
\begin{equation}
\label{eq:estimate_I_II_NS_2D}
\E\Big[ \sup_{s\in [0,t]}| I_s|\Big]\leq \frac{\ellip}{4}\E\int_{0}^t \one_{[0,\stopp]} \|\nabla u(s)\|_{L^2}^2 \,\dd s+
C\Big(1+ \E\int_{0}^t \one_{[0,\stopp]}\|u(s)\|_{L^2}^2 \, \dd s\Big).
\end{equation}
By Assumption \ref{ass:NS_2D}\eqref{it:NS_2D_ellipticity}, we have
\begin{equation}
\begin{aligned}
\label{eq:estimate_III_NS_2D}
\E \Big[ \sup_{s\in [0,t]}| II_s|\Big]
&\leq\E\int_{0}^{t}\one_{[0,\stopp]} \|(\btwod_n(s)\cdot \nabla )u(s)\|_{L^2(\ell^2)}\|u(s)\|_{L^2} \, \dd s\\
&\leq M \E \int_{0}^{t}\one_{[0,\stopp]} \|\nabla u(s)\|_{L^2}\|u(s)\|_{L^2} \, \dd s\\
&\leq \frac{\ellip}{4} \E \int_{0}^{t}\one_{[0,\stopp]} \|\nabla u(s)\|_{L^2}^2 +
C_{M,\ellip}\E\int_{0}^{t}\one_{[0,\stopp]}\|u(s)\|_{L^2}^2 \, \dd s.
\end{aligned}
\end{equation}
Therefore by taking expectations in \eqref{eq:Ito_energy_inequality}, using $\E[III_{t}]=\E[IV_t]=0$, and \eqref{eq:estimate_I_II_NS_2D}-\eqref{eq:estimate_III_NS_2D}, one obtains
\begin{equation}
\label{eq:intermediate_estimate_nabla_L2}
\E\int_{0}^{t} \one_{[0,\stopp]} \|\nabla u(s)\|_{L^2}^2 \, \dd s \leq
C\Big(1+t+\E\int_{0}^{t} \one_{[0,\stopp]}\|u(s)\|_{L^2}^2 \, \dd s\Big)
\end{equation}
where $C$ is independent of $u,u_0,j$.

Next we take $\E\big[\sup_{s\in [0,t]}|\cdot|\big]$ in \eqref{eq:Ito_energy_inequality}. Due to \eqref{eq:estimate_I_II_NS_2D}-\eqref{eq:estimate_III_NS_2D}, it remains to estimate $III$ and $IV$.
By the Burkholder-Davis-Gundy inequality, for all $t\in [0,T]$,
\begin{align*}
\E\Big[\sup_{s\in [0,t]}| III_s|\Big]
&\lesssim
\E\Big[\int_{0}^t \one_{[0,\stopp]} \|(\btwod_n(s)\cdot \nabla )u(s)\|_{L^2(\ell^2)}^2
\|u(s)\|_{L^2}^2 \, \dd s\Big]^{1/2}\\
&\leq  M
\E\Big[\Big(\sup_{s\in [0,\stopp\wedge t)} \|u(s)\|^2_{L^2}\Big)^{1/2}
\Big(\int_{0}^t \one_{[0,\stopp]} \|\nabla u(s)\|_{L^2}^2 \, \dd s\Big)^{1/2}\Big]\\
&\leq \frac{1}{4}\E y(t)+C_{M}\E \int_{0}^t \one_{[0,\stopp]} \|\nabla u(s)\|_{L^2}^2 \, \dd s\\
&\stackrel{\eqref{eq:intermediate_estimate_nabla_L2}}{\leq} \frac{1}{4}\E y(t)
+ C_{M}'\Big(1+\E\int_{0}^t \one_{[0,\stopp]} \| u(s)\|_{L^2}^2 \, \dd s\Big).
\end{align*}
where $y$ is as in \eqref{eq:def_y_NS_2D}. A similar argument applies to $IV$. Indeed, by \eqref{eq:NS_2D_sublinearity}, for all $t\in [0,T]$,
\begin{align*}
\E\Big[\sup_{s\in [0,t]}| IV_s|\Big]
&\leq
\E\Big[\Big(\sup_{s\in [0,\stopp\wedge t)} \|u(s)\|_{L^2}^2\Big)^{1/2}
\Big(\int_{0}^t\one_{[ 0,\stopp]\times \O} \|\Gforce(s,u(s))\|_{L^2(\ell^2)}^2 \, \dd s\Big)^{1/2}\Big]\\
&\leq \frac{1}{4}\E y(t)+ {C}\Big(1 +\E\int_{0}^t \one_{[ 0,\stopp]\times \O} \|u(s)\|_{L^2}^2 \, \dd s\Big).
\end{align*}
Taking $\E\big[\sup_{s\in [0,t]}|\cdot|\big]$ in \eqref{eq:Ito_energy_inequality} and collecting the previous estimates as well as using that $\usigma=u$ on $[ 0,\stopp)\times \O$ and $\stopp=\sigma_j$ we get \eqref{eq:Gronwall_estimate_NS_2D}.
\end{proof}

\begin{remark}
\label{r:other_boundary_conditions}
As already mentioned at the beginning of this appendix several other boundary conditions can be used in \eqref{eq:Navier_Stokes_appendix}. We mention two possible choices in case $a^{i,j}=\delta^{i,j}$ (the Kronecker's delta): \emph{perfect slip} boundary conditions
$$
u\cdot \nu=0,\ \ \   P_{\partial\Dom}\big[(\nabla u-(\nabla u)^{\top})\nu \big]=0,\qquad \text{ on }\partial\Dom,
$$
or \emph{Navier} boundary  conditions
$$
u\cdot \nu=0, \ \ \  P_{\partial\Dom}\big[(\nabla u+(\nabla u)^{\top})\nu \big]+\alpha u=0,\qquad \text{ on }\partial\Dom,
$$
where $\alpha\geq 0$, $\nu$ is the outer normal vector field on $\partial\Dom$ and $P_{\partial\Dom}:=I-\nu\otimes \nu$ is the orthogonal projection on the tangent plane of $\partial\Dom$. In case $\Dom$ is a bounded $C^3$-domain, the weak formulation of the Stokes operators subject to one of the previous boundary conditions has been given in Subsection 2.3 and 3.2 of \cite{PW18}, respectively. Using the methods in the latter one can check that in case $a^{i,j}=\delta^{i,j}$ and $\Dom$ is a $C^3$-bounded domain, then the results of Theorem \ref{t:2D_global_rough_noise} extends to the case of perfect-slip or Navier boundary conditions. Indeed, \eqref{eq:boundarycondition} still holds since $u\cdot \nu=0$ on $\partial\Dom$ in both cases and one can replace the use of Lemma \ref{l:Sob_emb} by standard Sobolev embeddings due to the regularity of $\Dom$.
\end{remark}

\end{appendix}

\subsubsection*{Acknowledgements}

The authors thank the anonymous referees and Max Sauerbrey for careful reading and helpful comments.

\medskip
\noindent
{\bf Data Availability.} This manuscript has no associated data.

\bibliographystyle{alpha-sort}
\bibliography{literature}

\end{document}